\author{Piotr Rudnicki\thanks{Institute of Mathematics, University of Warsaw, Poland, \tt piotr.rudnicki@mimuw.edu.pl}   ~~~and~~ Andrzej Weber\thanks{Institute of Mathematics, University of Warsaw, Poland, {\tt aweber@mimuw.edu.pl}\hfill\break A.W. supported by NCN grant 2016/23/G/ST1/04282 (Beethoven 2)}}
\title{Characteristic classes  of Borel orbits  of square-zero upper-triangular matrices}
\theoremstyle{plain}
\newtheorem{theorem}{Theorem}[section]
\newtheorem*{nono-theorem}{Theorem}
\newtheorem{lemma}[theorem]{Lemma}
\newtheorem{cor}[theorem]{Corollary}
\newtheorem{prop}[theorem]{Proposition}
\theoremstyle{definition}
\newtheorem{defn}[theorem]{Definition}
\theoremstyle{remark}
\newtheorem{rmk}[theorem]{Remark}
\newtheorem{ex}[theorem]{Example}
\newcommand{\setsep}{\hspace{1mm} | \hspace{1mm}}
\newcommand{\C}{\mathbb{C}}
\newcommand{\Z}{\mathbb{Z}}
\newcommand{\T}{\mathbb{T}}
\newcommand{\PP}{\mathbb{P}}
\newcommand{\GL}{\operatorname{GL}}
\newcommand{\SL}{\operatorname{SL}}
\newcommand{\B}{\operatorname{B}}
\newcommand{\Inv}{\operatorname{Inv}}
\newcommand{\Hom}{\operatorname{Hom}}
\newcommand{\mC}{\operatorname{mC}}
\newcommand{\bett}{\operatorname{\beta}}
\newcommand{\Fell}{\operatorname{\mathcal F\hskip-1pt\ell}}
\def\sk{}
\newcommand{\trans}[2]{\left(#1 , #2 \right)}
\def\Sch{\mathfrak{S}}
\def\Sn{\mathcal{S}_n}
\def\NN{\mathcal{N}}
\def\O{\mathcal{O}}
\def\X{\mathcal{X}}
\def\piu{{\underline\pi}}
\def\jeden{{\mathbbm{1}}}
\def\csm{c_{\text{\rm SM}}}
\def\Ell{{\mathcal E\sk \ell\ell}}
\begin{document}

\maketitle

\begin{abstract}
Anna Melnikov provided a parametrization of  Borel orbits in  the affine variety
of square-zero $n \times n$ matrices by the set of involutions in the
symmetric group. A related combinatorics leads to a construction a Bott-Samelson type resolution of the orbit closures.
This allows to compute cohomological and K-theoretic invariants of the orbits: fundamental classes, Chern-Schwartz-MacPherson classes and motivic Chern classes in torus-equivariant theories. The formulas are given in terms of Demazure-Lusztig operations.
The case of square-zero upper-triangular matrices is rich enough to include information about cohomological and K-theoretic classes of the double Borel orbits in $\Hom(\C^k,\C^m)$ for $k+m=n$. We recall the relation with double Schubert polynomials and show analogous interpretation of Rim\'anyi-Tarasov-Varchenko trigonometric weight function.
\end{abstract}

The most prominent example of singular spaces with linear group action present in literature is the case of the closures of nilpotent orbits in the Lie algebra of a reductive group. The new discoveries concerning characteristic  classes for singular varieties were not applied to   nilpotent orbits so far. The exception is the whole nilpotent cone. It has well known desingularization -- the Springer resolution.
This resolution can be identified with the cotangent bundle of the generalized flag variety.
The Springer resolution was studied by many authors in the context of characteristic classes, starting from \cite{BBM} and recently in particular  \cite{AMSS0, AMSS, LSZZ, SZZ}.
By
\cite{Fu}  any symplectic resolution of the nilpotent orbit is of the form  of the cotangent bundle $T^*G/P$, where $P$ is a parabolic subgroup. The symplectic resolution is not unique. Any such pair of resolutions  differ by a sequence of locally trivial family of special flops,
\cite{Namikawa}.
The nilpotent orbits not admitting any symplectic resolution appear in all types except $A_n$ and $G_2$. Now we will study only the case $A_n$.
\medskip

Each nilpotent $G$-orbit decomposes into Borel orbits.
It is remarkable that there is a finite number of $B$-orbits for elements of  nilpotence order (height) two.
For higher nilpotence order the number of $B$-orbits is infinite except low dimensional cases.
Bender-Perrin \cite{BenderPerrin} has described a  resolution of 2-nilpotent $B$-orbit closures.
The construction is inductive, similar to the construction of the Bott-Samelson resolution of Schubert varieties. It is a mixture of the Bott-Samelson resolution and it forms an associated vector bundle. We present that construction in a constructive way, so that it can be used to compute cohomological invariants of orbits.
\medskip

We are primarily interested in the characteristic classes.
One can think about characteristic classes as a deformation of the notion of the fundamental class, as advertised in a review article \cite{Rimanyi}. We will focus on the invariants listed below. They are of cohomological nature and live in Borel equivariant cohomology or in equivariant K-theory.
We will consider the equivariant theories for the maximal torus.
\begin{itemize}
\item The fundamental classes in equivariant cohomology and equivariant K-theory serve as a starting point, as well as an occasion to compare our approach with classical Schubert calculus.
\item The \emph{Chern-Schwartz-MacPherson} classes are the invariants of singular varieties  having good covariant functorial properties. Their existence was conjectured by Grothendieck and Verdier. The first functorial construction was given by MacPherson \cite{MacPherson}. It demands possibility of resolving singularities. Thus the theory is available for varieties in characteristic zero. In a series of papers Aluffi (e.g. \cite{AluffiHyper, Aluffi, AluffiMarcolli}) computed many examples, gave an alternative definition and connected this theory with various problems in enumerative geometry. An equivariant version was introduced by Ohmoto \cite{Ohmoto, WeHefei}.

\item The $\chi_y$-genus defined by Hirzebruch for smooth varieties admits an extension  for singular varieties and the underlying homology class was defined in \cite{BSY}. An equivariant version (especially for torus action) were defined and studied in \cite{WeSEL}. A need and profits from introducing equivariant version is clear when one deals with classical  singular varieties like Schubert varieties. By the localization theorem computation may be reduced to local calculus of power series. The K-theoretic counterpart of the Hirzebruch class seems to be more natural for problems originating from representation theory. The definition was given in \cite{BSY} and the equivariant version was introduced in \cite{FRW,AMSS}.
\end{itemize}
\medskip

The structure allowing to compute the characteristic classes of Schubert varieties in the generalized flag variety $G/B$ serves as the model situation. The equivariant cohomology and K-theory with respect to the maximal torus $\T\subset B$  admit actions of the Demazure-Lusztig-type operations. According to \cite{AMSS} the Demazure-Lusztig operations permute the motivic Chern classes of Schubert varieties.
The construction is motivated by a geometric interpretation of the Hecke algebra as presented in \cite{ChrissGinzburg}. An elliptic version of the Demazure-Lusztig operations were used in \cite{RW} to compute  the elliptic characteristic classes for Schubert varieties in $G/B$ and to identify them with stable envelopes of Okounkov \cite{AgaOko}. There are two families of operations constructed   on the elliptic cohomology of $G/B$. The operations are indexed by the simple reflections $s\in W$, the generators of the Weyl group. Let's denote the mentioned operations (for the purpose of this exposition) by $\mathcal{BS}_s$ and $\mathcal{R}_s$. The Schubert varieties are indexed by the elements of the Weyl group. According to the main result of \cite{RW} the elliptic classes $\Ell(X_w)$ satisfy two recursions
$$\mathcal {BS}_s(\Ell(X_w))=\Ell(X_{ws})\quad\text{if}\quad{\rm length}(ws)>{\rm length}(w)\,,$$
$$\mathcal R_s(\Ell(X_w))=\Ell(X_{sw})\quad\text{if}\quad{\rm length}(sw)>{\rm length}(w)\,.$$
The Bernstein-Gelfand-Gelfand-type formulas serve as a prototype for the first relation, it is called the Bott-Samelson recursion. The second one (corresponding to the left action of simple reflections) is a rewritten R-matrix relation.
The elliptic class of any Schubert variety $X_w$ can be computed inductively with respect to the length of $w$, starting from the class of $X_{id}$ which is a point. For cohomology (including quantum cohomology) and  K-theory such operations were described in \cite{MihalceaNaruse}. We find analogous relations for characteristic classes of 2-nilpotent $B$-orbits.
\medskip

The nilpotent elements of nilpotence order at most two for the group $\GL_n$  are the matrices, whose square is zero $$A^2=0\,.$$
In addition we assume that $A$ is upper-triangular.
The variety of such matrices were studied in \cite{Melnikov}. The $B$-orbits are indexed by certain combinatorics.
To see the first nontrivial example of nilpotent orbits consider the 2-nilpotent elements contained in $\NN\subset \mathfrak{gl}_3$. These are the matrices

\hfil{ $\begin{pmatrix}0~~a~~b\\[-0.05cm]0~~0~~c\\[-0.05cm]0~~0~~0\end{pmatrix}$}\ \   with\ \  $ ac=0$.

\noindent There are three 2-nilpotent upper-triangular $B$-orbits of rank one
 $$\mathcal O_1=\{a=0,~c\neq 0\}\,,\quad
\mathcal O_2=\{a\neq 0,~c=0\}\,,\quad
\mathcal O_3=\{a=0,~c= 0,~b\neq 0\}\,.$$
The characteristic classes of $\mathcal O_1$ and $\mathcal O_2$ can be obtained from the minimal one -- the fundamental class $\mathcal O_3$ -- by suitably modified Demazure operations.
In this case the closures of orbits are smooth, but starting from $n=4$ singularities appear. The closure of the $\B$ orbit of the matrix
\vskip2pt
\hfil { $\begin{pmatrix}0\ 0\ 0\ 0\\[-0.1cm]0\ 0\ 1\ 0\\[-0.1cm]0\ 0\ 0\ 0\\[-0.1cm]0\ 0\ 0\ 0\end{pmatrix}$}

\noindent
consists of the block matrices {\footnotesize $\begin{pmatrix}0\ A\\0\,\ 0\end{pmatrix}$}, where $A$ is a 2$\times$2 singular matrix.
\medskip

The Bender and Perrin resolution of 2-nilpotent $B$-orbits allows to apply the methods of \cite{RW, KRW} and to give recursive formula for characteristic classes.
Let us present the geometric picture.
All the square-zero matrices of a fixed order $r$ are conjugate. They form a $\GL_n$ orbit. Every such matrix $A$ is characterized by its kernel $K$, the image $W$ and a nondegenerate map $\C^n/K\to W$.  From this data one constructs a resolution of the closure of the $\GL_n$-orbit. Let $\Fell(r,n-r,n)$ be the 2-step flag variety  consisting of pairs of subspaces $W\subset K$ in $\C^n$ with $\dim(W)=r$, $\dim(K)=n-r$.
There are natural maps
$$
\begin{matrix}&
&\Hom(\C^n/K,W)&\stackrel\varphi\longrightarrow &\overline {\GL_n\cdot A}&\subset&
\Hom(\C^n,\C^n)\\
&&\downarrow\\
&&\Fell(r,n-r,n).\end{matrix}$$
Moreover the resolution map $\varphi$ factorizes through the bundles
$$\Hom(\C^n/W,W)\qquad\text{and}\qquad \Hom(\C^n/K,K)$$ over the corresponding Grassmannians giving rise to symplectic resolutions.
The natural action of the Borel subgroup decomposes the singular space $\overline{\GL_n\cdot A}$ as well as its resolutions.
To construct a resolution of the Borel orbit closure it is enough to resolve a suitable Schubert variety in the Grassmannian and pull back the bundle of Hom's. A precise algorithm is given by Theorem \ref{th:resolution}.
\medskip

In the present paper the Chern-Schwartz-MacPherson  classes $\csm$ and motivic Chern classes $\mC$ are computed, as well as the fundamental classes.
The ambient space here is the vector space of strictly upper-triangular matrices in $\mathfrak{gl}_n$, denoted by $\NN$.

\begin{nono-theorem}[Theorems \ref{th:fund-indukcja} and \ref{char-indukcja}] There exist operators $\bett_i$ and $A_i$ acting on the fraction field of $H^*_\T(\NN)\simeq \Z[t_1,t_2,\dots t_n]$ and $\bett_i^K$ and $A_i^K$ acting on the fraction field of $K_\T(\NN)[y]=\Z[t_1^{\pm1},t_2^{\pm1},\dots t_n^{\pm1},y]$ having the following properties.
Suppose $\O_1\subset\NN$  is an $B$-orbit of a square-zero matrix, and suppose $\O_2=Bs_i\O_1\subset\NN$, where $s_i$ is the matrix of a simple reflection.  Assume  $\dim\O_2=
\dim\O_1+1$. Then the fundamental classes, Chern-Schwartz-MacPherson classes and Motivic Chern classes can be computed by the formulae
$$\begin{matrix}
{[\overline \O_2]}={e(\NN)}\cdot\bett_i\hskip-3pt\left(\frac{[\overline \O_1]}{e(\NN)}\right)\in H^{2{\rm codim}\, \O_2}_\T(\NN)\,,&
{\csm(\O_2\subset\NN)}={e(\NN)}\cdot A_i\hskip-3pt\left(\frac{\csm(\O_1\subset\NN)}{e(\NN)}\right)\in H^*_\T(\NN)\,,\\ \\
{[\overline \O_2]_K}={e^K(\NN)}\cdot\bett^K_i\hskip-3pt\left(\frac{[\overline \O_1]_K}{e^K(\NN)}\right)\in K_\T(\NN)\,,&
{\mC(\O_2\subset \NN)}={e^K(\NN)}\cdot A^K_i\hskip-3pt\left(\frac{\mC(\O_1\subset \NN)}{e^K(\NN)}\right)\in K_\T(\NN)[y]\,.
\end{matrix}$$
\end{nono-theorem}
Here $e(\NN)$ and $e^K(\NN)$ stand for the equivariant Euler classes in cohomology and K-theory. The operators appearing in the theorem are versions of Hecke operators from \cite{AMSS0} or \cite{AMSS}.
The notion of the K-theoretic class in not
ambiguous  in this case, since the singularities of  the square-zero orbit closures are rational, see Remark \ref{non_ambiguity}.

\medskip
At the first sight the space of square-zero matrices may seem quite innocent and not very interesting, but the $B$-equivariant geometry of this space contains information about classically studied objects, such as Schubert classes in cohomology of flag varieties.
Note that $\Hom(\C^n,\C^n)$ can be embedded as square-zero matrices of the size $2n\times 2n$
$$A\mapsto \Big(\begin{matrix}0A\\[-0.1cm]0\,0\end{matrix}\Big)\,.$$
This was already noticed by \cite[\S2,2]{KnutsonZinn:2014}.
The Borel orbit of the big matrix consists of matrices of the same shape with $A$ replaced by $B_1AB_2$, where $B_1$ and $B_2$ are upper-triangular $n\times n$ matrices.
The fundamental classes in the equivariant cohomology of $\Hom(\C^n,\C^n)$ are the double Schubert polynomials.
Thus  cohomological properties of $\B$ orbits of square-zero matrices contain information about classical Schubert calculus.
In our approach two sets of variables of the double Schubert polynomials have equal role and can be exchanged by the Hecke action. Exchanging such variables corresponds to leaving the upper-right block. For example if $n=2$, in the doubled dimension we have three rank 2 upper triangular orbits:

$$ \arraycolsep=1.7pt\def\arraystretch{1}
A_1=\hbox{\footnotesize$\left(\begin{array}{cccc}
\cline{3-4}
0 &0 &\multicolumn{1}{|c}{\bf 1}  & \multicolumn{1}{c|}{\bf 0} \\
0& 0&\multicolumn{1}{|c}{\bf 0}  & \multicolumn{1}{c|}{\bf 1} \\
\cline{3-4}
0 & 0 & 0 & 0 \\
0 & 0 & 0 & 0 \\
\end{array}
\right)$}\,,\qquad A_2=\hbox{\footnotesize$\left(\begin{array}{cccc}
\cline{3-4}
0 &0 &\multicolumn{1}{|c}{\bf 0}  & \multicolumn{1}{c|}{\bf 1} \\
0& 0&\multicolumn{1}{|c}{\bf 1}  & \multicolumn{1}{c|}{\bf 0} \\
\cline{3-4}
0 & 0 & 0 & 0 \\
0 & 0 & 0 & 0 \\
\end{array}
\right)$}\,,\qquad
A_3=\hbox{\footnotesize$\left(\begin{array}{cccc}
\cline{3-4}
0 &1 &\multicolumn{1}{|c}{\bf 0}  & \multicolumn{1}{c|}{\bf 0} \\
0& 0&\multicolumn{1}{|c}{\bf 0}  & \multicolumn{1}{c|}{\bf 0} \\
\cline{3-4}
0 & 0 & 0 & 1 \\
0 & 0 & 0 & 0 \\
\end{array}
\right)$}\,. $$

\noindent
The first matrix belongs to the minimal rank 2 orbit and it corresponds to the 0-dimensional Schubert cell in the flag variety $\Fell(2)\simeq\PP^1$. The second one corresponds to the open Schubert cell in $\Fell(2)$. The third one belongs to an {\it exterior} orbit, not visible in cohomology of $\PP^1$. Beside these three orbits we have orbits of rank 1 and 0. There is an action of permutations on the set of the $\B_{2n}$-orbits. In the presented example the simple reflections act as follows: $$s_1\cdot A_1=s_3\cdot A_1=A_2\,,\qquad s_2\cdot A_1=A_3\,.$$
This action\footnote{In our setup this is a partial action. The orbits of $A_2$ and $A_3$ are maximal among the upper-triangular orbits. The matrices $s_2\hskip-2pt\cdot\hskip-2pt A_2$,~ $s_1\hskip-2pt\cdot\hskip-2pt A_3$ and $s_3\hskip-2pt\cdot\hskip-2pt A_3$  are not upper triangular. The resolution of \cite{BenderPerrin} does not assume upper-triangularity of the $B$-orbit. Nevertheless we do not want to extend the exposition in the current paper.}  agrees with the left and right actions of permutations on the set of Schubert cells.
From our point of view right and left operations belong to the same family. Right operations are obtained by conjugation with the block matrix $\Big(\begin{matrix}I\, 0\\[-0.1cm]0\,\sigma\end{matrix}\Big)$ and left operations are those obtained by conjugation with $\Big(\begin{matrix}\sigma\,0\\[-0.1cm]0\,I\end{matrix}\Big)$. Additionally w have the middle operation which destroys the block-upper-triangular structure.
For an extended discussion about right and left Demazure-Lusztig operations see the introduction of \cite{MihalceaNaruse}.
We hope that our approach will shed a new light in on theory of characteristic classes in the classical situation.
\medskip

Quite analogously to Schubert polynomials one can find interpretation of the trigonometric weight function of Rim\'anyi-Tarasov-Varchenko. We describe here only the case of the full flag variety. This time we present the flag variety as the quotient $\Fell(n)=\Hom(\C^{n-1},\C^n)/\hskip-3pt/\B_{n-1}$ and embed $\Hom(\C^{n-1},\C^n)$ into the space of upper-triangular matrices of the size $(2n-1)\times(2n-1)$. Surprisingly the weight functions are exactly equal to the motivic Chern class of orbits (up to the factor corresponding to $\B_{n-1}$).
\smallskip

At the end we would like to remark that the classes of $B$-orbit closures of square-zero matrices were studied by
Di Francesco and  Zinn-Justin in \cite{FranZinn:2005,FranZinn:2005ne}, later by Knutson and Zinn-Justin in \cite{KnutsonZinn:2007,KnutsonZinn:2014}. The results of their work include our formulas for fundamental classes in (usual) equivariant cohomology. The main feature of the mentioned papers is certain Temperley-Lieb algebra, an extension of Hecke algebra, which governs the fundamental classes of square-zero $B$-orbits in the Lie algebra, as well as certain classes in the resolutions of the nilpotent orbit. The relevant formalism of Temperley-Lieb algebra action for K-theory was introduced in \cite{Zinn:2018}. The structure considered there is quite rich. The classes obtained by the Temperley-Lieb action are related to our fundamental classes in K-theory, although the exact relation is involving, see [\emph{loc.~cit.} \S5.4].
Novelty of our results is computation of Chern-Schwartz-MacPherson an motivic Chern classes for square-zero Borel orbits. Moreover geometric interpretation of Tarasov-Rim\'anyi-Varchenko weight function is new. We concentrate on the   trigonometric weight function and its relation with motivic Chern classes of $B_{n-1}\times B_n$-orbits in $\Hom(\C^{n-1},\C^n)$. The parallel statement for homological version follows by well known specialization.
\medskip

Content: After introducing the notation in \S\ref{section:Parametrization}  we provide in \S\ref{section:low_rank_orbits} geometric
description of  orbits corresponding to the  involutions
consisting of one or two transpositions. The general case is treated in the main part of the paper. In  \S\ref{sec:ConBePe} we describe the resolution of singularities of
closures of  orbits by specializing the construction of Bender and Perrin.
The formulas allowing to compute characteristic classes by the action of a suitable Hecke algebra are given in \S\ref{sec:indukcja1} and \S\ref{sec:indukcja2}.
In the remaining sections we show connection with classical Schubert calculus and the theory of weight functions.
\medskip

We would like to thank the Referee for suggesting
many improvements of the manuscript.

\tableofcontents

\section{Parametrization of Borel orbits in $\NN$}
\label{section:Parametrization}
 Let $\NN$ be the space of (strictly) upper triangular $n\times n$
matrices with  complex coefficients. Hence $\NN$ is an affine space of dimension $\frac{(n-1)\cdot n}{2}$. The square-zero matrices form an affine subvariety  \(\{A \in \NN \setsep
A^2=0\} \subseteq \NN\). Let $\GL_n=\GL_n(\C)$ be the general linear
group  and \(\B=\B_n\) be the standard Borel subgroup
of \(\GL_n\), consisting of upper triangular matrices.
The Borel
group acts on $\NN$ by conjugation. It is known that for \(n \geq 6\) the
number of \(\B_n\) orbits in $\NN$ is infinite, while the number of square-zero $\B$-orbits  is finite.
We review now the combinatorial description provided
by Melnikov in \cite[\S1]{Melnikov}.
\medskip

The  set of \(\B_n\)-orbits is in bijection with
involutions in the symmetric group $\Sn$.
Let us fix \(w \in \Sn\).
If \(w\) is an involution then it can be
written uniquely as a composition of disjoint transpositions \(w=\trans{i_1}{j_1} \trans{i_2}{j_2} \dots
\trans{i_k}{j_k}\) for \(i_1<i_2<\dots <i_k\) and \(i_s<j_s\) for any
\(s=1,2,\dots,k\).
  We always assume that involutions are written in this order
  without making explicit appealing to this.	
Let us denote the set of all involutions in
\(\Sn\) by \(\Inv_n\). For any \(w \in \Sn\) let
 \(M_w \in \GL_n\) be the permutation matrix. For $w\in\Inv_n$ we define the matrix \(N_w\) by erasing the lower-triangular and diagonal part from
\(M_w\), that is
\[
  (N_w)_{ij}=
  \begin{cases}
    0, & \text{if } i \geq j, \\
    (M_w)_{ij}, & \text{otherwise.}
   \end{cases}
 \] For example, take \(w=\trans{1}{3} \trans{2}{4} \in \Inv_{4}\). Then
{$$M_w=\begin{pmatrix}0\ 0\ 1\ 0\ 0\\[-0.1cm]0\ 0\ 0\ 1\ 0 \\[-0.1cm]1\ 0\ 0\ 0\ 0\\[-0.1cm]0\ 1\ 0\ 0\ 0\\[-0.1cm]0\ 0\ 0\ 0\ 1\end{pmatrix}\,,\qquad N_w=\begin{pmatrix}0\ 0\ 1\ 0\ 0\\[-0.1cm]0\ 0\ 0\ 1\ 0\\[-0.1cm]0\ 0\ 0\ 0\ 0\\[-0.1cm]0\ 0\ 0\ 0\ 0\\[-0.1cm]0\ 0\ 0\ 0\ 0\end{pmatrix}.$$}

 \begin{theorem}[{\cite[2.2]{Melnikov}}]For each square-zero $\B_n$-orbit
   in $\O\subset\NN$ there exists a unique
   involution \(w \in \Inv_n\) such that \(\O=\O_{N_w}\).
 \end{theorem}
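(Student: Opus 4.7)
The plan is to exhibit rank-type invariants that distinguish square-zero $B$-orbits, match them with the combinatorics of involutions, and then produce, by normalization, a canonical representative $N_w$ in each orbit. For an upper triangular matrix $A$ and indices $1\le i<j\le n$, set $r_{ij}(A)=\mathrm{rank}\,A_{[1,i],[j,n]}$, the rank of the ``northeast'' submatrix with rows in $\{1,\dots,i\}$ and columns in $\{j,\dots,n\}$. Conjugation by $b\in\B$ multiplies this submatrix on the left and right by invertible upper triangular blocks, so each $r_{ij}$ is $\B$-invariant. A direct count on the model matrix $N_w$ of an involution $w=(i_1,j_1)\cdots(i_k,j_k)$ yields
\[r_{ij}(N_w)=\#\{s:i_s\le i,\ j_s\ge j\},\]
and the collection of these numbers plainly recovers $w$. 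Thus distinct involutions already lie in distinct $\B$-orbits, which settles uniqueness once existence is established.

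For existence I would normalize a given square-zero upper triangular $A$ to some $N_w$ by sweeping columns from right to left. In column $j$, pick the lowest nonzero entry, say in row $i$; upper triangular column operations on columns $\ge j$ scale the pivot to $1$, upper triangular row operations on rows $\le i$ clear the rest of column $j$ above the pivot, and upper triangular column operations then clear the rest of row $i$ to the right of the pivot. These moves are all effected by conjugation with elements of $\B$. The key consequence of $A^2=0$ is that, with a single unit at $(i,j)$ in row $i$ and column $j$ after this step, the relations $(A^2)_{i,\ell}=A_{j,\ell}$ and $(A^2)_{k,j}=A_{k,i}$ force row $j$ and column $i$ of $A$ to be identically zero. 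Consequently later sweeps in columns $j-1,j-2,\dots$ cannot reintroduce entries in the frozen row/column, and the final pattern of $1$'s is supported strictly above the diagonal on pairwise distinct rows and columns. Such a pattern is precisely the datum $N_w$ for a unique $w\in\mathrm{Inv}_n$.

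Combining the two steps: given $\O\subset\NN$, pick any $A\in\O$, normalize to some $N_w$ as above to get $\O=\O_{N_w}$ (existence); and if $\O_{N_w}=\O_{N_{w'}}$, then $\B$-invariance of $r_{ij}$ forces $r_{ij}(N_w)=r_{ij}(N_{w'})$ for all $i<j$, hence $w=w'$ (uniqueness).

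The technical heart — and the step I expect to be the main obstacle — is the non-interference claim in the normalization: one must verify that the row/column operations used to clear the active column genuinely lie in $\B$ (i.e.\ respect upper triangularity) and that, once a pivot at $(i,j)$ is installed, the square-zero hypothesis prevents subsequent manipulations from repopulating row $j$ or column $i$. Carefully bookkeeping the parabolic subgroup of $\B$ that remains as free symmetry after each pivot placement, and invoking $A^2=0$ at each stage to guarantee that the next active column has no entries in already-frozen rows, is what makes the algorithm terminate at a matrix of the form $N_w$.
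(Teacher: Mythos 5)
Your overall strategy — separating invariants (uniqueness) from a normalization algorithm (existence) — is the right shape of argument, and it parallels how one would read Melnikov. But the invariants you chose are not actually $\B$-invariant, which breaks the uniqueness half. You set $r_{ij}(A)=\mathrm{rank}\,A_{[1,i],[j,n]}$ and claim that conjugation by $b\in\B$ multiplies this corner block by invertible upper triangular blocks on the two sides. That would hold for a one-sided action, but the two-sided conjugation $A\mapsto bAb^{-1}$ feeds entries from rows below $i$ and columns left of $j$ into the northeast block. A concrete counterexample: take $n=3$, $A=E_{23}=N_{(2,3)}$, and $b=I+E_{12}\in\B_3$. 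Then $bAb^{-1}=E_{13}+E_{23}$, so $r_{12}(A)=\mathrm{rank}\,(0,0)=0$ while $r_{12}(bAb^{-1})=\mathrm{rank}\,(0,1)=1$, although these two matrices lie in the same $\B$-orbit. So $r_{ij}$ is not constant on orbits and cannot be used to separate them.

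The source of the error is a flag mismatch: $r_{ij}$ is the rank of the induced map $\operatorname{span}(e_j,\dots,e_n)\to\C^n/\operatorname{span}(e_{i+1},\dots,e_n)$, which is built from the anti-dominant flag — but an upper triangular $b$ preserves the \emph{standard} flag $U_k=\operatorname{span}(e_1,\dots,e_k)$, not that one. The genuinely $\B$-invariant ranks are those of the induced maps $U_j\to\C^n/U_{i-1}$, i.e.\ of the blocks $A_{[i,n],[1,j]}$ (rows $\geq i$, columns $\leq j$): writing $A'=bAb^{-1}$ one has $A'(U_j)+U_{i-1}=b\bigl(A(U_j)+U_{i-1}\bigr)$, so the dimensions match. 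On $N_w$ these equal $\#\{s : i_s\geq i,\ j_s\leq j\}$, and inclusion–exclusion recovers the set of arcs, hence $w$; with that replacement your uniqueness argument goes through. On the existence side, the sweep is plausible and the use of $A^2=0$ to freeze row $j$ and column $i$ once a pivot sits at $(i,j)$ is exactly the right mechanism, but as you acknowledge the non-interference bookkeeping is the hard part and is not carried out, so that half remains a sketch.
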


In the sequel we will just write \(\O_w\) as a shorthand for  $\O_{N_w}$.
We will often refer to

\begin{lemma}[{\cite[\S3]{Melnikov}}]
  \label{lem:orbit_dim}
  For \(w = \trans{i_1}{j_1} \trans{i_2}{j_2} \dots \trans{i_m}{j_m} \in
  \Inv_n\) one has
\begin{equation} \label{eq:mel_borel_dim}
    \dim \left(\O_w\right) = mn + \sum_{k=1}^m (i_k-j_k)-\sum_{k=2}^mr_k(w)
  \end{equation}
where for \(k \geq 2\) we put
\begin{equation}\label{def_r}r_k(w)=\sharp \{\ell<k \setsep
 j_{\ell} < j_k\} + \sharp \{\ell < k \setsep j_{\ell} < i_k\}\,.\end{equation}
\end{lemma}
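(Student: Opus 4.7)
The plan is to compute $\dim\O_w$ as the rank of the infinitesimal orbit map. Since $\O_w=\B_n\cdot N_w$, we have $\dim\O_w=\dim\mathfrak{b}-\dim\mathfrak{b}^{N_w}$, where $\mathfrak{b}$ denotes the Lie algebra of $\B_n$ (upper-triangular matrices, including the diagonal, so $\dim\mathfrak{b}=\binom{n+1}{2}$) and $\mathfrak{b}^{N_w}=\{X\in\mathfrak{b}:[X,N_w]=0\}$. Writing $N_w=\sum_{k=1}^{m}E_{i_k,j_k}$, a short calculation gives
$$([X,N_w])_{ab}=\sum_{k}\delta_{b,j_k}X_{a,i_k}-\sum_{k}\delta_{a,i_k}X_{j_k,b},$$
so the condition $[X,N_w]=0$ is equivalent to three families of linear constraints on $X$: (A)~$X_{j_k,b}=0$ for $b\notin J:=\{j_1,\ldots,j_m\}$; (B)~$X_{a,i_l}=0$ for $a\notin I:=\{i_1,\ldots,i_m\}$; (C)~$X_{i_k,i_l}=X_{j_k,j_l}$ for all $k,l$.

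Next I would enumerate the free parameters of the kernel after intersecting with upper-triangularity. Partitioning $\{1,\ldots,n\}=I\sqcup J\sqcup M$ and sorting every upper-triangular entry $X_{ab}$ by the blocks of $a$ and $b$: the three blocks $M\times I$, $J\times M$, $J\times I$ are killed by (A)/(B); the four blocks $M\times M$, $M\times J$, $I\times M$, $I\times J$ contribute freely; and (C) glues the upper-triangular parts of $I\times I$ and $J\times J$, leaving one free parameter per pair $(k,l)$ with $k\leq l$ and $j_k\leq j_l$ (the case $j_k>j_l$ forces $X_{i_k,i_l}=0$ since the paired entry would lie in the lower triangle).

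Finally, subtracting the resulting kernel dimension from $\binom{n+1}{2}$ and collecting terms, set $p(w):=|\{(k,l):k<l,\,j_k<j_l\}|$ and $q(w):=|\{(k,l):k<l,\,j_k<i_l\}|$. A direct enumeration of the surviving blocks yields $\dim\O_w=mn+\sum_{k=1}^{m}(i_k-j_k)-p(w)-q(w)$, and the identities $p(w)=\sum_{k\ge 2}\sharp\{\ell<k:j_\ell<j_k\}$ and $q(w)=\sum_{k\ge 2}\sharp\{\ell<k:j_\ell<i_k\}$ (both by reindexing) give $p(w)+q(w)=\sum_{k\ge 2}r_k(w)$, matching the claim. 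I expect the main obstacle to be exactly this final arithmetic bookkeeping; a workable fallback is induction on $m$, stripping off the last arc $(i_m,j_m)$ and verifying that the dimension grows by $n+i_m-j_m-r_m(w)$ by re-examining how (A)/(B)/(C) acquire their new conditions for the added transposition.
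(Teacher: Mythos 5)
Your proof is correct, and it supplies an argument that the paper itself omits: the paper just cites Melnikov for Lemma~\ref{lem:orbit_dim} and gives no proof. Your route --- computing $\dim\O_w=\dim\mathfrak b-\dim\mathfrak b^{N_w}$ from the explicit linear system $[X,N_w]=0$ --- is the natural direct approach, and the constraint analysis (A), (B), (C) and the block partition $I\sqcup J\sqcup M$ are exactly right, including the subtle point that for $k<l$ with $j_k>j_l$ condition (C) kills \emph{two} upper-triangular entries ($X_{i_k,i_l}$ and $X_{j_l,j_k}$), contributing zero free parameters rather than one.

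The one place you wave your hands, ``a direct enumeration of the surviving blocks yields \dots,'' does carry some real bookkeeping that is worth writing out. Concretely, with $q=\sharp\{(k,l):j_k<i_l\}$ and $s=p(w)$ one finds
\[
\dim\O_w = U(I\!\times\!I)+U(J\!\times\!J)+U(J\!\times\!I)+U(J\!\times\!M)+U(M\!\times\!I)-(s+m),
\]
where $U(A\!\times\!B)$ counts upper-triangular positions in the block. Then $U(I\!\times\!I)=U(J\!\times\!J)=\binom{m+1}{2}$, $U(J\!\times\!I)=q$,
\[
U(M\!\times\!I)=\sum_l i_l-\tfrac{m(m+1)}{2}-q,\qquad
U(J\!\times\!M)=mn-\sum_k j_k-q-\tfrac{m(m-1)}{2},
\]
(each obtained by writing $\sharp\{m\in M: m<i_l\}=i_l-l-\sharp\{k:j_k<i_l\}$ and $\sharp\{m\in M:m>j_k\}=n-j_k-\sharp\{l:i_l>j_k\}-\sharp\{l:j_l>j_k\}$ and summing). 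Substituting and simplifying gives $\dim\O_w=mn+\sum_k(i_k-j_k)-s-q$, and since $q=\sharp\{(\ell,k):\ell<k,\,j_\ell<i_k\}$ (the restriction $\ell<k$ is automatic because $j_\ell<i_k$ forces $i_\ell<j_\ell<i_k$), this is $mn+\sum_k(i_k-j_k)-\sum_{k\ge2}r_k(w)$. So the argument closes, but the enumeration should appear rather than be asserted.
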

Quick use of the above formula is a bit problematic. There is a graphical
representation of the involution, from which one can read easily the dimension of the orbit. We draw arcs connecting the transposed numbers.   For example if $w=\trans 2 6 \trans 4 7\in \mathcal S_8$ the corresponding picture is the following:
\\

$$
\xymatrix@-1pc{
1\ar@{-}[r]&
2\ar@{-}[r]\ar@{-}@/^1.5pc/[rrrr]&
3\ar@{-}[r]&
4\ar@{-}[r]\ar@{-}@/^1.5pc/[rrr]&
5\ar@{-}[r]&
6\ar@{-}[r]&
7\ar@{-}[r]&
8}
$$
The dimension of the orbit is equal to
$$m(n-m)-\sharp(\text{intersections})-\sum_\text{fixed points}\sharp(\text{arcs passing over})\,.$$
In our example the dimension of the orbit $\O_{\trans 2 6\trans 4 7}$ is equal to~~
$2(8-2)-1-(0+1+2+0)=8$.
We refer for details in \cite{Melnikov-patern}.
\section{Geometric description of lower rank orbits}
\label{section:low_rank_orbits}

\subsection{Orbits corresponding to transpositions}
Let us fix \(1 \leq k<\ell \leq n\)
and consider transposition \(w=\trans{k}{\ell} \in \Inv_n\).  Any matrix belonging to \(\O_w\)  satisfies three conditions
\begin{enumerate}
\item it is of the block form
$
  \begin{pmatrix}
    0 & A \\
    0 & 0
  \end{pmatrix}
$ with \(A\) of the size \(k \times (n-\ell+1)\)
\item the rank of $N_w$ is one.
\item the entry $(N_w)_{k,\ell}$ is non-zero.
\end{enumerate}
Let \(X_w\) be the  closure of
\(\O_w\) in $\NN$.
\begin{prop}
  \label{prop:rank_conditions_trans}
   The variety \(X_w\) is defined by the conditions 1. and 2. above.
\end{prop}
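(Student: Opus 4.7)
The plan is to denote by $Y\subseteq\NN$ the subvariety cut out by conditions 1 and 2 of the proposition, and to establish $X_w=Y$ by the two inclusions $X_w\subseteq Y$ and $Y\subseteq X_w$. The first follows from $\B$-invariance of $Y$; the second from an irreducibility-plus-dimension argument.

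For $X_w\subseteq Y$, I would rephrase condition 1 as the pair of flag-incidences $M(V_{\ell-1})=0$ and $M(\C^n)\subseteq V_k$, where $V_j=\operatorname{span}(e_1,\dots,e_j)$ is the standard partial flag. Since $V_{\ell-1}$ and $V_k$ are $\B$-stable, conjugation by any $B\in\B$ preserves these incidences, and preservation of the rank condition 2 under conjugation is automatic. Thus $Y$ is a closed $\B$-invariant subvariety of $\NN$ containing $N_w$, so $\mathcal O_w=\B\cdot N_w\subseteq Y$ and, passing to closures, $X_w\subseteq Y$.

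For the reverse inclusion $Y\subseteq X_w$, I would compare dimensions after observing that $Y$ is irreducible. The set of rank $\le 1$ matrices of shape $k\times(n-\ell+1)$ is the affine cone over the Segre embedding $\PP^{k-1}\times\PP^{n-\ell}\hookrightarrow\PP^{k(n-\ell+1)-1}$, which is irreducible of dimension $k+(n-\ell+1)-1=n+k-\ell$. By Lemma \ref{lem:orbit_dim} applied to $w=\trans{k}{\ell}$ (with $m=1$ and the $r_k$-sum empty), $\dim\mathcal O_w=n+(k-\ell)$ as well. Since $\mathcal O_w$ is the image of the connected group $\B$ under a regular map, it is irreducible, hence so is $X_w$. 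An irreducible closed subvariety of the irreducible variety $Y$ of the same dimension must coincide with it, so $X_w=Y$.

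I do not anticipate a serious obstacle: the content of the proof is the translation of condition 1 into preservation of the two standard flag subspaces $V_{\ell-1}$ and $V_k$, together with the classical irreducibility of the rank-$\le 1$ determinantal locus via the Segre embedding, with the dimension matching supplied by Melnikov's formula.
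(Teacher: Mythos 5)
Your proof is correct and follows essentially the same route as the paper: identify $Y$ as a classical rank-one determinantal locus, observe it is irreducible of dimension $n+k-\ell$, match this against $\dim\mathcal O_w$ from Lemma~\ref{lem:orbit_dim}, and conclude equality of closures. You are slightly more explicit than the paper in justifying the containment $\mathcal O_w\subseteq Y$ via $\B$-stability (the flag-incidence reformulation of condition~1) and in citing the Segre embedding for irreducibility, but these are details the paper leaves tacit, not a different argument.
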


\begin{proof} The variety $Y$ defined by 1. and 2. (or equivalently ${\rm rk}(A)=1$) is a classical rank locus. It is irreducible  of dimension
$n+k-\ell$.
The orbit $\O_{\trans k \ell }$ is of the same dimension by Lemma \ref{lem:orbit_dim}. Since it is contained in $Y$, its closure is equal to $Y$.\qedhere
\end{proof}
The orbit closure $X_{\trans k \ell}$ admits a resolution of singularities which is the bundle $\O(-1)^{\oplus(n-\ell-1)}$ over $\PP(\C^k)$. Here $\C^k$ is identified with the space of the columns having zeros at the positions below $k$. The resolution sends the tuple of proportional vectors $v_\ell,v_{\ell+1},\dots,v_n$ (lying in the same fiber of $\O(-1)$) to the matrix with columns  equal to $v_i$  for $i\geq \ell$ and the remaining columns are zero.

\subsection{Orbits corresponding to double transpositions} Let
\(w=\trans{i}{j}\trans{k}{\ell} \in \Inv_n\). We have 3 cases of the value of
\(r_2(w)\). Namely,  \(r_2(w)\) is equal to $0$, $1$ or $2$.
Clearly $X_{\trans i j \trans k \ell}$ is contained in the algebraic sum
$$X_{\trans i j \trans k \ell}\subset X_{\trans i j}+X_{\trans k \ell}
=\{x+y~|~x\in X_{\trans i j},~y\in X_{\trans k \ell}\}$$
but in general there is no equality.

\hfil\begin{tikzpicture}[thick,scale=0.2]
\draw(0,0) rectangle (7,7);
\draw(7,0)--(0,7);
\draw[pattern=north west lines, pattern color=red] (2.5,7) rectangle (7,5);
\draw[pattern=north east lines, pattern color=blue] (5,7) rectangle (7,3);
\draw[fill=black] (5.5,3.5) circle (0.25);
\draw[fill=black] (3,5.5) circle (0.25);
\draw(10.5,4.5) arc (90:270:0.5);
\draw(10.5,4.5)--(11,4.5);
\draw(10.5,3.5)--(11,3.5);
\draw(14,0) rectangle (21,7);
\draw(21,0)--(14,7);
\draw[pattern=north west lines, pattern color=red] (16.5,7) rectangle (21,5);
\draw(16,7) -- (18,7) node [above,text centered,midway]
{$j$};
\draw(21,5) -- (21,6) node [right,text centered,midway]
{$i$};
\draw[fill=black] (17,5.5) circle (0.25);
\draw(24,4)--(25.6,4);
\draw(24.8,3.2)--(24.8,4.8);
\draw(28,0) rectangle (35,7);
\draw(35,0)--(28,7);
\draw[pattern=north east lines, pattern color=blue] (33,7) rectangle (35,3);
\draw(33,7) -- (34,7) node [above,text centered,midway]
{
$\ell$
};
\draw(35,3) -- (35,4) node [right,text centered,midway]
{
$k$
};
\draw[fill=black] (33.5,3.5) circle (0.25);
\end{tikzpicture}
\medskip

\def\kwadrat#1#2#3#4{
\begin{tikzpicture}[thick,scale=0.2]
\draw(0,0) rectangle (7,7);
\draw(7,0)--(0,7);
\draw[pattern=north west lines, pattern color=red] (#2,7) rectangle (7,7-#1);
\draw[pattern=north east lines, pattern color=blue] (#4,7) rectangle (7,7-#3);
\draw[fill=black] (#2+0.5,7.5-#1) circle (0.25);
\draw[fill=black] (#4+0.5,7.5-#3) circle (0.25);
\end{tikzpicture}}

\bigskip

\noindent{\bf 0. The case \(r_2(w)=0\).} For the pattern
$
\xymatrix@-1pc{
_i\ar@{--}[r]
\ar@{-}@/^0.8pc/[rrr]
&
_k\ar@{--}[r]
\ar@{-}@/^0.7pc/[r]&
_\ell\ar@{--}[r]&
_j
}
$ (the dashed lines indicate, that the subsequent vertices do not have to be adjacent) the matrices  belonging to $X_w$ are nested
\begin{center}\kwadrat 2534 \end{center}
The bigger rectangle corresponds to the $k\times(n-\ell+1)$ matrix, the smaller one to the $i\times (n-j+1)$-matrix.
\medskip

We describe a partial resolution of the variety $X_w$. Let $\Fell(1,2;\C^k)$ be the 2-step flag variety consisting of lines and planes in $\C^k$.
Let \(\pi \colon
\C^k \rightarrow \C^{k-i} \) be the projection onto last $(k-i)$-coordinates. Let
\(\Omega \subseteq \Fell(1,2;\C^k)\) be the Schubert variety defined by the condition
$$\Omega=\{(L,P)\in \Fell(1,2;\C^k) \setsep \dim\pi(P)\leq 1\}=\{(L,P)\in \Fell(1,2;\C^k) \setsep \dim (P\cap \C^i)\geq 1\}\,.$$
We have the tautological
bundles of lines \(\mathcal{L}\) and planes \(\mathcal{P}\) over the flag variety $\Fell(1,2,n)$. Consider the restricted bundle $$\mathcal{E}=\left(\mathcal{L}^{\ell-j}
\oplus \mathcal{P}^{n-j-1}\right)_{|\Omega}\,.$$

Let $p:\mathcal E\to \NN$ be the map which sends the coordinates of $\mathcal E$ to the columns of the matrix. The variety $X_{\trans i j}+X_{\trans k \ell}$ is contained in the image of $p$. Counting the dimensions (Lemma \ref{lem:orbit_dim}) we see that
$$X_{\trans i j \trans k \ell}=X_{\trans i j}+X_{\trans k \ell}=p(\mathcal E)\,.$$
The map \(p \colon \mathcal{E} \rightarrow
X_w\) is proper but it is not a resolution of singularities of \(X_w\) since \(\mathcal{E}\)
is a bundle over the singular base \(\Omega\).
Resolving the singularities of the Schubert variety $\Omega$ we obtain a resolution of $X_w$.

\begin{cor} If $w=\trans ij\trans k\ell$ and $i<k<\ell<j$ then the orbit closure $X_w$ is defined by rank conditions.\end{cor}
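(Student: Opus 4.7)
The plan is to identify $X_w$ with an explicit matrix Schubert-type rank variety $Y\subseteq\NN$ defined by imposing, for each initial row segment $[1,a]$ and terminal column segment $[b,n]$ meeting the support of $N_w$, the rank bound $\operatorname{rk}A_{[1,a],[b,n]}\le r_{a,b}$, where $r_{a,b}$ counts the arcs of $w$ starting in $[1,a]$ and ending in $[b,n]$. For $w=\trans ij\trans k\ell$ with $i<k<\ell<j$ this amounts to the support condition (rows $\le k$, columns $\ge \ell$), the bound $\operatorname{rk}A_{[1,k],[\ell,n]}\le 2$, and the rank-at-most-$1$ bounds on $A_{[1,i],[\ell,n]}$, $A_{[1,k],[j,n]}$, $A_{[i+1,k],[\ell,n]}$ and $A_{[1,k],[\ell,j-1]}$.

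The inclusion $X_w\subseteq Y$ is straightforward: $N_w$ itself satisfies each inequality by inspection, and each bound is $\B$-invariant because $\B$ stabilises both the standard flag $\C^1\subset\cdots\subset\C^n$ of rows and the dual column flag, so the rank of every sub-block indexed by initial/terminal segments is preserved by conjugation. Since $Y$ is closed, $\mathcal O_w\subseteq Y$ gives $X_w\subseteq Y$.

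For the reverse inclusion I would use the parametrization $p\colon\mathcal E\to X_w$ constructed in the preceding paragraph, which factorises through $Y$; it then suffices to show $p(\mathcal E)=Y$. Given $M\in Y$, I would build a preimage in $\mathcal E$ as follows: let $P\subseteq\C^k$ be the column span of the big block of $M$ (enlarged, if $\dim<2$, to a 2-plane meeting $\C^i$, which is allowed by the rank bounds), and let $L\subset P$ be a line containing the columns $c_j,\ldots,c_n$ of $M$, which the bound on $A_{[1,k],[j,n]}$ forces to be colinear. The bound on $A_{[i+1,k],[\ell,n]}$ translates precisely into the Schubert condition $\dim(P\cap\C^i)\ge 1$, so $(L,P)\in\Omega$, and the fiber data $(c_\ell,\ldots,c_{j-1})\in P^{j-\ell}$, $(c_j,\ldots,c_n)\in L^{n-j+1}$ lifts $M$.

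The main obstacle is pinning down the correct minimal list of rank inequalities: omit any of them and $Y$ becomes strictly larger than $X_w$. The nested pattern $i<k<\ell<j$ is what makes the argument succeed, because it places the rank-$1$ sub-block in a corner of the rank-$2$ block perfectly aligned with the $\B$-flags, so no extra positional constraint inside $P$ is missed. Should the explicit lifting above become delicate, the cleaner alternative is a dimension count: $Y$ is irreducible (a classical fact for matrix Schubert-type varieties), and the equality $Y=X_w$ then follows from $\dim X_w=2n+i+k-j-\ell$, which is Lemma~\ref{lem:orbit_dim}, together with a computation of $\dim Y$ obtained by stratifying according to the ranks of the sub-blocks using the formula $r(m+m'-r)$ for determinantal loci.
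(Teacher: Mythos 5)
Two of the rank bounds you impose are not satisfied on the orbit, so your $Y$ is strictly contained in $X_w$ and the forward inclusion $X_w\subseteq Y$ already fails. Concretely, take $w=\trans 26\trans 34\in\Inv_7$ (so $i=2,k=3,\ell=4,j=6$) and $B=I+E_{13}$: then $BN_wB^{-1}=E_{14}+E_{26}+E_{34}\in\O_w$, and its block $A_{[1,2],[4,7]}=\left(\begin{smallmatrix}1&0&0&0\\0&0&1&0\end{smallmatrix}\right)$ has rank $2$, violating your bound $\operatorname{rk}A_{[1,i],[\ell,n]}\le 1$. The root of the problem is your invariance claim. For the \emph{two-sided} action $A\mapsto B_1AB_2^{-1}$ the ranks of the ``north-east'' blocks $A_{[1,a],[b,n]}$ are indeed preserved — that is the matrix-Schubert situation — but the relevant action here is conjugation $A\mapsto BAB^{-1}$, and the single $\B$-stable flag $V_c=\langle e_1,\dots,e_c\rangle$ enters on \emph{both} the row and column side. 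The conjugation-invariant ranks are those of the induced maps $V_a\to\C^n/V_b$, i.e.\ of the blocks $A_{[b+1,n],[1,a]}$; for a matrix supported in the $[1,k]\times[\ell,n]$ block these reduce to ranks of $A_{[b+1,k],[\ell,a]}$. In the nested case $i<k<\ell<j$ the correct list (which is what Melnikov's description of the ideal, quoted in the paper, gives) is: support in $[1,k]\times[\ell,n]$, $\operatorname{rk}A\le 2$, $\operatorname{rk}A_{[i+1,k],[\ell,n]}\le 1$ and $\operatorname{rk}A_{[1,k],[\ell,j-1]}\le 1$. Your additional bounds on $A_{[1,i],[\ell,n]}$ and $A_{[1,k],[j,n]}$ are of the wrong shape and do not hold on $\O_w$.

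There is a secondary error, pointing to the same confusion, in the lifting step: you place the columns $c_j,\dots,c_n$ on the line $L$ and $c_\ell,\dots,c_{j-1}$ in the plane $P$, but it is the other way round. The $s$-th column of $BN_wB^{-1}$ is $(B^{-1})_{\ell s}\,Be_k+(B^{-1})_{js}\,Be_i$, so for $\ell\le s<j$ (where $(B^{-1})_{js}=0$) the columns are all proportional to $Be_k$, while for $s\ge j$ they sweep out the whole $2$-plane $\mathrm{span}(Be_i,Be_k)$. Hence it is $\operatorname{rk}A_{[1,k],[\ell,j-1]}\le 1$ that encodes collinearity, not a bound on $A_{[1,k],[j,n]}$; your bound on the latter would again fail on $\O_w$. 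The overall strategy — an explicit determinantal model $Y$, forward inclusion by invariance and closedness, reverse inclusion via the partial resolution $p\colon\mathcal E\to X_w$ or a dimension count against an irreducible $Y$ — is reasonable, and the paper itself gives no direct argument here (it cites Melnikov's computation of the ideal and records the resulting conditions), but it can only get off the ground once the blocks whose ranks are genuine conjugation-invariants are used.
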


We will not give a detailed explanation of the above statement, since in general the ideal defining $X_w$ is given in \cite[Sec. 3, Theorem \S3.5]{Melnikov-ideal}. In this case the ideal is generated by the conditions which can be expressed in terms of geometric position of the ``north-east'' rectangles\footnote{For example the  ``noth-east'' rectangle $NE(i,j)$ determined by the pair $(i,j)$ consist of the entries $(i',j')$ with $1\leq i'\leq i$ and $j\leq j'\leq n$.} $NE(i,j)$ and $NE(k,\ell)$ determined by the pairs $(i,j)$ and $(k,\ell)$:
\begin{itemize}
\item ${\rm rk}(A)\leq 2$;
\item the $2\times 2$ minors of the submatrices not intersecting with the intersection $NE(i,j)\cap NE(k,\ell)$ vanish;
\item the entries of the matrix which do not lie in any rectangle are zero.
\end{itemize}

\bigskip

\noindent{\bf 1. The case \(r_2(w)=1\).} For the pattern
$
\xymatrix@-1pc{
_i\ar@{--}[r]
\ar@{-}@/^0.7pc/[rr]
&
_k\ar@{--}[r]
\ar@{-}@/^0.7pc/[rr]&
_j\ar@{--}[r]&
_\ell
}
$
the matrices belonging to $X_w$ are of the shape:
\begin{center}\kwadrat 2436 \end{center}
Note that both rectangles are contained in a bigger one, which is entirely located above the diagonal.
Let us consider the (smooth)
 Schubert variety \[ \Omega=\{(L,P) \in \Fell(1,2;\C^k) \setsep L \subseteq \C^i\}\,.\]
The variety \(\Omega\) fibers  over $\PP(\C^i)$ with the fiber isomorphic to $\PP(\C^{k-1})$, therefore $\dim\Omega=
  i+k-3$. Similarly as in the previous case we set
$$\mathcal{E}=\left(\mathcal{L}^{\ell-j} \oplus\mathcal{P}^{n-\ell+1}\right)_{|\Omega}\,.$$
Here \[\dim(\mathcal{E})=i+k-3+(\ell-j)+2(n-\ell+1)=2n+(i-j)+(k-\ell)-1\,.\]
As before we define the projection map $p:\mathcal E\to\NN$. By the construction $X_w\subset p(\mathcal E)$. The variety $p(\mathcal E)$ is irreducible and  by Lemma \ref{lem:orbit_dim}~~ $\dim(X_w)=2n+(i-j)+(k-\ell)-1$ as well. Thus $X_w= p(\mathcal E)$ and $p$ is a resolution of singularities.

\begin{cor} If $w=\trans ij\trans k\ell$ and $i<k<j<\ell$ then the orbit closure $X_w$ is defined by rank conditions.\end{cor}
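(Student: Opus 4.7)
The plan is to identify explicit rank conditions and show they cut out precisely the image of the resolution $p:\mathcal{E}\to\NN$ just constructed. I would propose the following three conditions on a matrix $M\in\NN$:
(i) all entries of $M$ outside $NE(i,j)\cup NE(k,\ell)$ vanish;
(ii) $\operatorname{rk}(M)\le 2$;
(iii) the $i\times(\ell-j)$ submatrix of $M$ on rows $\{1,\ldots,i\}$ and columns $\{j,\ldots,\ell-1\}$ has rank at most $1$.
Let $Y\subseteq\NN$ denote the subvariety they cut out. The three conditions reflect exactly the three ingredients of the resolution: the block-support (from the fixed range of rows/columns in the Schubert picture), the plane $P$ of dimension $2$, and the line $L$ of dimension $1$.

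The inclusion $X_w\subseteq Y$ is essentially tautological from the resolution. Every $M\in X_w=p(\mathcal{E})$ arises from data $(L,P)\in\Omega$ together with columns $v_j,\ldots,v_{\ell-1}\in L\subseteq\C^i$ and $v_\ell,\ldots,v_n\in P\subseteq\C^k$, the remaining columns being zero. Condition (i) follows from the support of these columns, (ii) follows from all nonzero columns lying in the plane $P$, and (iii) follows from columns $j,\ldots,\ell-1$ lying in the line $L$.

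For the reverse inclusion $Y\subseteq X_w$, I would argue as follows. Given $M\in Y$, condition (i) combined with (ii) lets me choose a $2$-dimensional subspace $P\subseteq\C^k$ containing the column span of $M$; similarly (i) and (iii) let me choose a $1$-dimensional subspace $L\subseteq\C^i\cap P$ containing the span of columns $j,\ldots,\ell-1$ (completing to the required dimensions arbitrarily if the actual spans are smaller). Then $(L,P)\in\Omega$, and the columns of $M$ define a section of $\mathcal{E}$ over this point whose image under $p$ is $M$, so $M\in p(\mathcal{E})=X_w$. Irreducibility and the correct dimension of $Y$ then follow from the set-theoretic equality $Y=p(\mathcal{E})=X_w$ together with the dimension count already carried out before the corollary.

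The main obstacle is that the statement is implicitly a scheme-theoretic one: it is one thing to show $Y=X_w$ as sets and another to show that the ideal generated by the relevant minors is exactly the ideal of $X_w$. The clean route is to appeal to Melnikov's explicit description of the ideals of square-zero orbit closures in \cite{Melnikov-ideal}, parallel to the remark made immediately before this corollary for the case $r_2(w)=0$; a direct verification would amount to a Gr\"obner-basis style argument showing that the rank-$2$ and rank-$1$ minors together with the vanishing of the entries outside $NE(i,j)\cup NE(k,\ell)$ generate a prime ideal of the expected codimension.
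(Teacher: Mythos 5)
Your proposed conditions (i)--(iii) do not cut out $X_w$ even set-theoretically, so the reverse inclusion $Y\subseteq X_w$ fails. The gap is in the sentence asserting that ``(i) and (iii) let me choose a $1$-dimensional subspace $L\subseteq\C^i\cap P$'': when the columns $\ell,\dots,n$ already span a $2$-plane $V$, the plane $P$ is forced to be $V$, and nothing in (i)--(iii) guarantees $V\cap\C^i\neq 0$. For a concrete counterexample take $n=6$ and $w=\trans{1}{4}\trans{3}{5}$, so $i=1$, $k=3$, $j=4$, $\ell=5$, and let $M$ have $M_{1,5}=M_{2,5}=M_{3,6}=1$ and all other entries $0$. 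Then (i) holds (the support lies in $NE(1,4)\cup NE(3,5)$), (ii) holds ($\operatorname{rk}M=2$), and (iii) holds (the relevant $1\times1$ block is $0$); yet $M\notin X_w$, since columns $5,6$ are $e_1+e_2$ and $e_3$, which together with the forced line $L=\C e_1$ span all of $\C^3$, so no $(L,P)\in\Omega$ can produce $M$. What is missing is a fourth rank condition, namely that the submatrix on rows $\{i+1,\dots,k\}$ and columns $\{\ell,\dots,n\}$ have rank at most $1$; this is the matrix incarnation of the Schubert condition $L\subseteq\C^i$, which forces $\dim\pi(P)\leq 1$, i.e.\ $P\cap\C^i\neq 0$. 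Together with your (iii), this is exactly the ``$2\times2$ minors of submatrices not meeting $NE(i,j)\cap NE(k,\ell)$ vanish'' clause of Melnikov's generators that the paper spells out in the $r_2(w)=0$ case.

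For comparison, the paper offers no self-contained verification of this corollary: it cites Melnikov's explicit presentation of the ideal in \cite{Melnikov-ideal} and simply observes that for $r_2(w)\in\{0,1\}$ her generators are all rank conditions, whereas for $r_2(w)=2$ one additionally needs $A^2=0$. Your closing paragraph correctly identifies Melnikov's theorem as the clean route and rightly flags the scheme-theoretic subtlety; but the set-theoretic sketch preceding it needs the extra rank condition above to be correct, and the proposed Gr\"obner-style re-derivation is not something the paper attempts.
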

\bigskip
\noindent{\bf 2. The case \(r_2(w)=2\).} For the pattern
$
\xymatrix@-1pc{
_i\ar@{--}[r]
\ar@{-}@/^0.7pc/[r]
&
_j\ar@{--}[r]
&
_k\ar@{--}[r]\ar@{-}@/^0.7pc/[r]&
_\ell
}
$
matrices belonging to $X_w$ are of the shape:
\begin{center}\kwadrat 2356 \end{center}
This shape of matrices in \(X_w\)
is analogous to the previous case. The difference is that the North-East
rectangle spanned by coordinate \((k,j)\) does not lie in the strictly upper
triangular part of \(n\times n\) matrices. Rank conditions do not suffice to
define \(X_w\)  in this case. One equation is missing, $X_w$ is a hypersurface in $p(\mathcal E)$. It turns out we just need to intersect
the rank conditions variety with the condition $A^2=0$.
Geometrically this means that we do not take the whole bundle \(\mathcal{E}\) (defined
analogously as in the previous cases) but we have to restrict to a naturally
defined quadric \(Q \subseteq \mathcal{E}\). The singularities in this case are quadratic and they are easy to resolve.

\begin{cor} If $w=\trans ij\trans k\ell$ and $i<j<k<\ell$ then the orbit closure $X_w$ is defined by rank conditions and the condition $A^2=0$.\end{cor}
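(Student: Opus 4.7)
My plan is to follow the template used in cases 0 and 1: build a bundle $\mathcal E$ over a smooth Schubert variety $\Omega$ together with a proper map $p \colon \mathcal E \to \NN$ whose image equals the variety $Y_0 \subseteq \NN$ cut out by the rank conditions, and then verify that intersecting $Y_0$ with $\{A^2 = 0\}$ recovers $X_w$.

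Concretely, since the geometric constraints on the line $L$ and plane $P$ (middle columns in $L \subseteq \C^i$, right-hand columns in $P \subseteq \C^k$) are the same as in case 1, I would take $\Omega = \{(L, P) \in \Fell(1, 2; \C^k) : L \subseteq \C^i\}$ and $\mathcal E = (\mathcal L^{\ell - j} \oplus \mathcal P^{n - \ell + 1})_{|\Omega}$, with $p$ sending $(L, P, (\alpha_r)_{r=j}^{\ell - 1}, (\vec v_c)_{c = \ell}^n)$ to the matrix whose column $r \in [j, \ell - 1]$ equals $\alpha_r \vec L$, whose column $c \in [\ell, n]$ equals $\vec v_c$, and whose remaining columns vanish. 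Surjectivity $p(\mathcal E) = Y_0$ is then routine from the structure of rank $\leq 2$ matrices with the prescribed support.

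The novelty in case 2 is that $A^2 = 0$ is now a nontrivial condition. A direct calculation gives $A \cdot \vec v_c = \bigl(\sum_{r=j}^{k} (\vec v_c)_r \alpha_r\bigr) \vec L$ for $c \in [\ell, n]$, while $A \cdot A_r = 0$ automatically for $r \in [j, \ell - 1]$ because $A_r \in L \subseteq \C^i$ and all columns of $A$ indexed by positions $\leq i < j$ vanish. Since the linear form $\phi_\alpha = \sum_{r=j}^{k} \alpha_r x_r$ already kills $L$, demanding $A \cdot \vec v_c = 0$ for every $c$ amounts to the single equation ``$\phi_\alpha$ vanishes on $P/L$''. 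Hence $Q := p^{-1}(\{A^2 = 0\})$ is a hypersurface in $\mathcal E$, of dimension $\dim \mathcal E - 1 = 2n + (i - j) + (k - \ell) - 2 = \dim X_w$ by Lemma \ref{lem:orbit_dim}.

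Since $p$ is proper, $p(Q)$ is closed. On the open locus of $Q$ where $L = P \cap \C^i$ and the vectors $\vec v_\ell, \ldots, \vec v_n$ span $P$, the map $p$ is birational onto $\O_w$, so the closure of this image is all of $X_w$; the quadric $Q$ may also contain a degenerate component (forcing all $\vec v_c \in L$) mapping to rank-one matrices in $X_{\trans{i}{j}} \subseteq X_w$. Together these give $p(Q) = X_w$, so $X_w = Y_0 \cap \{A^2 = 0\} = p(Q)$. The principal obstacle I expect is disentangling the possibly reducible quadric $Q$ and checking that every one of its components maps into $X_w$---in particular, justifying the inclusion $X_{\trans{i}{j}} \subseteq X_w$ via the closure order on $B$-orbits of involutions, so that no part of $p(Q)$ escapes $X_w$.
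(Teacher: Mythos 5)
Your argument follows the same route the paper takes for this case (the paper's own discussion here is only a few sentences, deferring to \cite{Melnikov-ideal} for the defining ideals). The bundle $\mathcal E$ over the smooth Schubert variety $\Omega$ is indeed the one from case $r_2(w)=1$; your computation $A\vec v_c=\phi_\alpha(\vec v_c)\,\vec L$ together with the observation that $\phi_\alpha$ kills $L\subseteq\C^i$ automatically (since $i<j$) is exactly the ``one missing equation'' the text alludes to, and the dimension count $\dim\mathcal E-1=\dim X_w$ agrees with Lemma~\ref{lem:orbit_dim}.

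The obstacle you flag at the end does need a line, but it closes quickly. As a set $Q$ is cut out by $\phi_\alpha(\vec v_c)=0$ for $c=\ell,\dots,n$; writing $\vec v_c=\beta_c\vec L+\gamma_c u$ with $u$ a lift of a generator of $P/L$, these become $\gamma_c\cdot\phi_\alpha(u)=0$, so $Q$ is the union of $Q_0=\overline{\{\phi_\alpha|_P=0\}}$ and the locus $\{\gamma_\ell=\dots=\gamma_n=0\}$. The first piece is irreducible of dimension $\dim X_w$, is $\B$-stable, and contains a lift of $N_w$ (take $L=\langle e_i\rangle$, $P=\langle e_i,e_k\rangle$), so $p(Q_0)=X_w$. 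The second maps to rank-$\le1$ matrices supported in $NE(i,j)$, i.e.\ into $X_{\trans{i}{j}}$; the inclusion $X_{\trans{i}{j}}\subset X_w$ you left to ``closure order'' is elementary here --- for $t\ne 0$ the matrix $N_{\trans{i}{j}}+tN_{\trans{k}{\ell}}$ is a torus-conjugate of $N_w$, hence lies in $\O_w$, and it limits to $N_{\trans{i}{j}}$ as $t\to 0$. Together with $p(\mathcal E)=Y_0$ this gives $Y_0\cap\{A^2=0\}=p(Q)=X_w$. One caution about that ``routine'' surjectivity: your list of rank conditions must include the requirement that the rows strictly below row $i$ have rank $\le 1$ (equivalently, that the column space meets $\C^i$); this does appear among the $2\times 2$-minor conditions avoiding $NE(i,j)\cap NE(k,\ell)$, and without it a rank-two matrix supported in columns $[\ell,n]$ whose column space avoids $\C^i$ would have no lift to $\mathcal E$.
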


Further investigation by hand of  $X_w$ singularities is troublesome. We stop here our consideration. For a general description of the ideal of $X_w$ see \cite{Melnikov-ideal}. The precise form of the ideal of $X_w$ is irrelevant for our purposes. We will be primarily interested in resolution of singularities of $X_w$.
As we can see the resolution of $\B$-orbit closure is intimately related to resolving singularities of Schubert cells.

\section{Inductive construction of resolution}
\label{sec:ConBePe}
In the article \cite{BenderPerrin} Bender and Perrin proved a general result about resolution of
singularities for $\B$-orbits in 2-nilpotent cones in simple simply-laced groups (i.e. in the type ADE).
In this section, we will give a precise form of this resolution
in the special case \(G=\SL_n(\C)\). In subsequent sections we will apply the resolution obtained here to compute fundamental classes of orbit closures and characteristic classes of orbits.

\begin{theorem}[{\cite[Corollary 3.2.1]{BenderPerrin}}] \label{Bender_Perrin_resolution}
  Let \(G\) be a simple simply-laced group and $B$ its Borel subgroup. Let \(x \in \mathfrak{g}\) be
  nilpotent element of height \(2\) and let \(X\) be its \(B\)-orbit closure. Then, there exists a sequence of minimal parabolic
  subgroups \(P_1,\dots,P_m\) and a vector space \(Y\), such that
  \[
    f \colon P_1 \times^B \dots \times^B P_m \times^B Y \rightarrow X,
    \hspace{1cm} [p_1,\dots,p_m,x] \mapsto p_1\dots p_m \cdot x,
  \] is a resolution of singularities.
  \end{theorem}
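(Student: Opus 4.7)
The plan is to prove this by induction on $\dim X$, building a Bott--Samelson style tower that stays inside the variety of height-$2$ nilpotents $\{y \in \mathfrak{g} : y^2 = 0\}$.

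For the base case, I would identify the smallest $B$-orbit closure of the given $G$-orbit type as an affine space $Y$. A height-$2$ nilpotent element is determined (up to $G$-conjugacy) by its rank $r$ together with the flag data $\operatorname{im}(x) \subseteq \ker(x)$. Picking the $T$-fixed flag that is smallest in the Bruhat order singles out a pair of coordinate subspaces $W \subseteq K \subseteq \C^n$, and the square-zero matrices with this choice of $(W,K)$ form the vector space $\Hom(\C^n/K, W)$. I would take $Y$ to be this vector space; the corresponding minimal orbit closure is $Y$ itself, and the resolution is the identity with $m=0$.

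The inductive step is the heart of the argument. Assuming $X$ is not minimal, I would exhibit a simple reflection $s$, the associated minimal parabolic $P_s$, and a strictly smaller height-$2$ $B$-invariant closed subvariety $X'$ with $\dim X' = \dim X - 1$, such that $X = P_s \cdot X'$. Applying induction to $X'$ yields a resolution $\tilde f' \colon \tilde X' = P_1 \times^B \cdots \times^B P_{m-1} \times^B Y \to X'$, and I would set
\[
  \tilde X := P_s \times^B \tilde X', \qquad \tilde f \colon \tilde X \to X, \ [p,\tilde x'] \mapsto p \cdot \tilde f'(\tilde x').
\]
Smoothness of $\tilde X$ is automatic, as it is a $\mathbb{P}^1$-bundle over the smooth $\tilde X'$. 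Properness follows because $P_s \times^B (-)$ preserves properness when the fibre $P_s/B \cong \mathbb{P}^1$ is complete. Birationality of $\tilde f$ reduces to verifying that a generic $x \in X$ has a unique preimage in $P_s \times^B X'$.

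The main obstacle is the construction of the pair $(s, X')$ at each stage. One must verify simultaneously the dimension jump $\dim(P_s \cdot X') = \dim X' + 1$ and the generic injectivity of $[p, x'] \mapsto p \cdot x'$ on the open orbit. For $G=\SL_n$, these points are governed by the involution combinatorics reviewed in Section~\ref{section:Parametrization}: given the involution $w$ parametrising the dense orbit of $X$, one seeks a simple reflection $s_i$ such that either left multiplication or conjugation by $s_i$ yields an involution $w'$ with $\dim \mathcal{O}_{w'} = \dim \mathcal{O}_w - 1$, whose arc diagram differs from that of $w$ by a single elementary move (sliding an endpoint or un-crossing an arc). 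Melnikov's dimension formula (Lemma~\ref{lem:orbit_dim}) guarantees such a move exists away from the minimal case. Generic injectivity then reduces to a local computation at a point of $\mathcal{O}_w$, examining the $P_s$-stabiliser; the simply-laced hypothesis ensures that this case analysis is uniform across types. This combinatorial bookkeeping, rather than any deep geometric input, is where I expect the bulk of the work to lie.
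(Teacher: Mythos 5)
The paper does not actually reprove this theorem: it is imported verbatim as \cite[Corollary 3.2.1]{BenderPerrin}, and what the paper proves itself is Theorem~\ref{th:resolution}, the explicit $\GL_n$-version with the tower read off from a reduced word. Your inductive skeleton is the right overall shape of both arguments, but the two points you label ``combinatorial bookkeeping'' are where essentially all of the mathematical content lives, and you establish neither. The existence at every non-minimal stage of a simple reflection $s$ and a smaller $X'$ with $X = P_s\cdot X'$ and $\dim X' = \dim X - 1$ is precisely the hard part. For $\SL_n$ the paper handles it once and for all by producing a specific permutation $\pi_w$ conjugating $N_{w_m}$ to $N_w$ together with the identity $\dim\O_w = \ell(\pi_w) + \tfrac{m(m+1)}{2}$ (Lemma~\ref{sequence_lemma}); a reduced word for $\pi_w$ then gives the entire tower, and the remark after Theorem~\ref{th:resolution} verifies that every truncation of that word stays among upper-triangular $2$-nilpotents. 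Pointing at Melnikov's formula and asserting ``such a move exists'' does not discharge this, and for the other simply-laced types, which the statement is supposed to cover, you offer no analogue beyond the phrase ``the simply-laced hypothesis ensures uniformity.''

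The birationality step also diverges from the paper in a way worth noting. You propose a local stabiliser computation on the $P_s$-level at a generic point. The paper instead argues globally: since the tower and the orbit closure have equal dimension and the map is $B$-equivariant, its restriction over the open $B$-orbit is a covering, and this covering is trivial because the $B$-stabiliser of $N_w$ is connected. That observation replaces your case analysis entirely. Finally, a small but telling confusion: the $B$-action on $\mathfrak g$ here is always the adjoint (conjugation) action, so ``left multiplication or conjugation by $s_i$'' is not a choice available at this stage; the left/right Demazure dichotomy you seem to be importing belongs to the double-Schubert discussion later in the paper, not to the resolution construction.
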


In our context, \(G=\GL_n(\C)\), but results for nilpotent orbits apply.

\subsection{The minimal orbit of a fixed rank}

All the square-zero matrices of a fixed rank are conjugate. Each adjoint orbit $\GL_n \cdot N_w$ contains a minimal $\B_n$ orbit. It is closed in $\GL_n \cdot N_w$. It is elementary to check that the following orbit is minimal, for example computing its dimension by  Lemma \ref{lem:orbit_dim}.

\begin{prop} The minimal orbit in $\GL_n \cdot N_w$ is of the form $N_{w_m}$, where $m=rk(N_w)$ and
$$w_m=(1,n-m+1)(2,{n-m+2})\dots(m,n)\,.$$
\vskip10pt
$$
\xymatrix@-1pc@R-1pc{
\bullet\ar@{-}[r]\ar@{-}@/^1.5pc/[rrrrrrrr]&
\bullet\ar@{-}[r]\ar@{-}@/^1.5pc/[rrrrrrrr]&
\bullet\ar@{--}[rr]\ar@{-}@/^1.5pc/[rrrrrrrr]&&
\bullet\ar@{--}[rrrr]\ar@{-}@/^1.5pc/[rrrrrrrr]&
&&&
\bullet\ar@{-}[r]&
\bullet\ar@{-}[r]&
\bullet\ar@{--}[rr]&&
\bullet\\
\ar@{<->}[rrrr]_m
&&&&}
$$

The closure of $\O_{w_m}$ is the linear space $X_{w_m}$ consisting of the matrices $A$ having the entries $a_{ij}=0$ for $j+i\leq 2n-m$.
\vskip 10pt
\hfil \begin{tikzpicture}[thick,scale=0.2]
\draw(0,0) rectangle (7,7);
\draw[pattern=north east lines, pattern color=blue] (7,7)--(4,7)--(7,4)--cycle;
\draw(7,0)--(0,7);
\draw(0,0) rectangle (7,7);
\draw(7,4) -- (7,7) node [right,text centered,midway]
{$m$};
\end{tikzpicture}
\end{prop}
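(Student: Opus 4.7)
The plan is to handle the two assertions in turn---the identification of the minimal orbit and the linear-space description of its closure---in both cases leaning on the arc-diagram reformulation of Lemma \ref{lem:orbit_dim} stated after it:
$$\dim \O_w \;=\; m(n-m)\;-\;\sharp(\text{crossings})\;-\;\sum_{p\text{ fixed}}\sharp(\text{arcs over }p).$$

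For the minimality claim, I would bound the two correction terms crudely. The number of crossings is at most $\binom{m}{2}$ (one per pair of arcs) and the arc-over-fixed-point count is at most $m(n-2m)$ (at most $n-2m$ fixed points, each with at most $m$ arcs passing over), so for every involution with $m$ transpositions
$$\dim \O_w \;\geq\; m(n-m)-\binom{m}{2}-m(n-2m)\;=\;m^2-\binom{m}{2}\;=\;\binom{m+1}{2}.$$
For $w_m$ both bounds are attained simultaneously: the arc endpoints interlace in the pattern $1<\dots<m<n-m+1<\dots<n$, so every pair crosses ($\binom{m}{2}$ crossings), and every fixed point $p\in\{m+1,\dots,n-m\}$ lies in the common interval $(m,n-m+1)$ and hence under all $m$ arcs. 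Tracing the equality conditions---the $i_k$ must occupy $\{1,\dots,m\}$ and the $j_k$ must occupy $\{n-m+1,\dots,n\}$---shows $\O_{w_m}$ is the unique orbit of minimal dimension $\binom{m+1}{2}$. A minimum-dimension orbit is automatically closed inside its ambient $\GL_n$-orbit (its closure can only contain orbits of strictly smaller dimension), so $\O_{w_m}$ is minimal in $\GL_n\cdot N_w$.

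For the linear-space description, let $L\subset\NN$ denote the subspace of matrices supported in the upper-right triangle of size $m$ depicted in the figure. Thought of as $3\times 3$ block matrices with rows and columns partitioned $m\,|\,(n-2m)\,|\,m$, elements of $L$ have only a top-right block $A_0$, itself upper triangular; hence $\dim L=1+2+\dots+m=\binom{m+1}{2}$. For $b\in B_n$ with diagonal blocks $(b_1,b_2,b_3)$, where $b_1$ and $b_3$ are upper triangular $m\times m$, a direct block computation gives $bAb^{-1}\in L$ with new top-right block $b_1A_0b_3^{-1}$, still upper triangular. Thus $L$ is a closed, $B_n$-stable subvariety containing $N_{w_m}$, giving $X_{w_m}=\overline{\O_{w_m}}\subseteq L$; equality then follows because both sides are irreducible of the same dimension $\binom{m+1}{2}$.

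The only real obstacle is the combinatorial check that simultaneous equality in the crossing/covering bounds forces $w=w_m$, but this is a short case analysis on the positions of the $i_k$ and $j_k$; the rest is block-matrix bookkeeping.
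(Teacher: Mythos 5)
Your argument is correct and fills in exactly the details the paper leaves to the reader (the paper only remarks that minimality is ``elementary to check, for example computing its dimension by Lemma \ref{lem:orbit_dim}''): you compute $\dim\O_{w_m}=\binom{m+1}{2}$ from the arc-diagram reformulation of that lemma, check it is the unique minimizer among rank-$m$ orbits parametrized by involutions, and then compare dimensions of $\overline{\O_{w_m}}$ and a $B$-stable linear space $L$, which is the route the paper indicates. One minor remark: your $L$ (the weakly upper-triangular part of the top-right $m\times m$ block, i.e.\ $a_{ij}=0$ for $j-i<n-m$) matches the figure and has the correct dimension $\binom{m+1}{2}$, but it does not coincide with the inline inequality $a_{ij}=0$ for $i+j\leq 2n-m$ printed in the Proposition; already for $m=1$ that inequality would force $X_{w_1}=\{0\}$, contradicting $\dim\O_{w_1}=1$, so the printed inequality is evidently a misprint and your $L$ is the intended set.
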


\subsection{A combinatorial lemma}

We will need the following lemma:
\begin{lemma} \label{sequence_lemma}
  Let \(w=\trans{i_1}{j_1} \dots \trans{i_m}{j_m} \in \Sn\) be an involution. Let
  \(k_1<k_2<\dots<k_{n-2m}\) be the fixed points of \(w\). Suppose \(\pi_w \in
  \Sn\) is the permutation, written in one-line notation as
  \[
    \pi_w=i_1,i_2,\dots,i_m,k_1,k_2,\dots,k_{n-2m},j_1,j_2,\dots,j_m
  \] i.e. \(\pi_w(1)=i_1,\pi_w(2)=i_2,\dots,\pi_w(n)=j_m\). Then
\begin{equation}\label{sprzeganie}\pi_w \cdot N_{w_m}=M_{\pi_w} N_{w_m}
  M_{\pi_w}^{-1}=N_w\end{equation}
Moreover
  \begin{equation}\label{dim_length}
    \dim(\O_w)=\ell(\pi_w)+\tfrac{m(m+1)}{2}
  \end{equation}
\end{lemma}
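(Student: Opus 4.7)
Identity~(\ref{sprzeganie}) is a bookkeeping exercise with permutation matrices. Using the standard rule $(M_\pi A M_\pi^{-1})_{ij} = A_{\pi^{-1}(i),\pi^{-1}(j)}$ together with the fact that $N_{w_m}$ has a single $1$ at each position $(s,\,n-m+s)$ for $s=1,\dots,m$, the conjugate $M_{\pi_w} N_{w_m} M_{\pi_w}^{-1}$ carries a $1$ precisely at the positions $(\pi_w(s),\pi_w(n-m+s)) = (i_s, j_s)$, which by definition is $N_w$.

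For~(\ref{dim_length}) I would compute $\ell(\pi_w)$ combinatorially and compare with Lemma~\ref{lem:orbit_dim}. Partition the inversions of the one-line notation into the three position blocks $I,K,J$ of sizes $m$, $n-2m$, $m$. Within the $I$-block and within the $K$-block the values are strictly increasing, so no inversions arise there; inversions inside the $J$-block give $J_{\mathrm{inv}} := \sharp\{s<t : j_s > j_t\}$. Each of the three cross-block contributions is a count of ``smaller value in a later position.'' The one essential simplification is to eliminate the fixed-point values $k_t$ by means of $I \sqcup K \sqcup J = \{1,\dots,n\}$: for each $s$,
\[
\sharp\{k_t : k_t < i_s\} = i_s - s - \sharp\{t : j_t < i_s\},
\]
and analogously for $\sharp\{k_t : k_t > j_s\}$.

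Summing these expressions over $s$ and using the elementary identities $\sum_s s = \tfrac{m(m+1)}{2}$ and $\sum_s \sharp\{s' : j_{s'} > j_s\} = \binom{m}{2}$, all cross-block contributions consolidate, and the result is
\[
\ell(\pi_w) = J_{\mathrm{inv}} + mn - m^2 + \sum_{s=1}^m (i_s - j_s) - C,
\qquad C := \sharp\{(s,t) : j_t < i_s\}.
\]

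It then remains to match this with~(\ref{def_r}). The first summand in $r_k(w)$ totals, over $k \geq 2$, to $\binom{m}{2} - J_{\mathrm{inv}}$, because each unordered pair $\ell<k$ satisfies exactly one of $j_\ell < j_k$, $j_\ell > j_k$. The second summand totals to $C$: the constraint $j_\ell < i_k$ automatically forces $\ell < k$, since $i_\ell < j_\ell$. Adding $\tfrac{m(m+1)}{2}$ to the formula for $\ell(\pi_w)$ and comparing with Lemma~\ref{lem:orbit_dim} then gives~(\ref{dim_length}). The only real obstacle is the careful bookkeeping of the cross-block inversions; once $K$ is eliminated via complementarity, the identification with $\sum r_k(w)$ is mechanical.
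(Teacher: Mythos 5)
Your proof is correct and follows essentially the same strategy as the paper: verify the conjugation identity by tracking the positions of the nonzero entries, then decompose the inversions of $\pi_w$ by position blocks $I,K,J$, eliminate the fixed-point block $K$ by complementarity, and match the result against Melnikov's formula from Lemma~\ref{lem:orbit_dim}. The only difference is organizational — the paper packages the cross-block counts as $in(I,K\cup J)$ and $in(I\cup K,J)$ instead of eliminating $K$ term-by-term — but the underlying bookkeeping is the same.
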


\begin{proof}
The first assertion is clear. Let us compute the length of $\pi_w$ counting the number of the inversions in $\pi_w$. The set $\{1,2,\dots,n\}$ is the sum of three intervals: $I=[1,m]$, $K=[m+1,n-2m]$ and $J=[n-2m+1,n]$.
The permutation $\pi_w$ preserves the orders of the first two sets.
Therefore
\begin{align*}\ell(\pi_w)&=in(I,K)+in(I,J)+in(K,J)+in(J,J)\\
&=in(I,K\cup J)+in(I\cup K, J)-in(I,J)+in(J,J)\,,\end{align*}
where
$$in(X,Y)=\sharp\{(x,y)\in X\times Y \setsep x<y,~\pi_w(x)>\pi_w(y)\}\,.$$
Also note that if $\ell\geq k$ then $j_\ell>i_\ell\geq i_k$.
Therefore
$$in(I,J)=\sharp \{\ell < k \setsep j_{\ell} < i_k\}\,.$$
$$in(J,J)=\sharp \{\ell < k \setsep j_{\ell} > j_k\}=\tfrac{m(m-1)}2
-\sharp \{\ell < k \setsep j_{\ell} < j_k\}\,.$$
By the definition \eqref{def_r}
$$\sum_{k=2}^m r_k(w)=\tfrac{m(m-1)}2+in(I,J)-in(J,J)\,.$$
Moreover
$$in(I,K\cup J)=\sum_{k=1}^m i_k-k=\sum_{k=1}^m i_k-\tfrac{m(m+1)}2\,.$$
Let $(j_1'>j_2'>\dots>j_m')$ be the sequence of indices $j_k$ written in the reverse order. We have
$$in(I\cup K,J)=\sum_{k=1}^m (n-j'_k+1)-k=nm-\sum_{k=1}^m j_k-\tfrac{m(m-1)}2\,.$$
Finally we obtain
\begin{align*}\ell(\pi_w)&=\left(\sum_{k=1}^m i_k-\tfrac{m(m+1)}2\right)+
\left(nm-\sum_{k=1}^m j_k-\tfrac{m(m-1)}2\right)-
\left(\sum_{k=2}^m r_k(w)-\tfrac{m(m-1)}2\right)\\&=nm+\sum_{k=1}^m(i_k-j_k)-\sum_{k=2}^m r_k(w)-\tfrac{m(m+1)}2\,.\end{align*}
Conclusion follows from Lemma \ref{lem:orbit_dim}.
\end{proof}
\begin{rmk} The permutation $\pi_w$ defined in the Lemma \ref{sequence_lemma} in general is not the unique one satisfying \eqref{sprzeganie} and \eqref{dim_length}.
Let
$w=\trans 15 \trans 26 \trans 34$
$$
\xymatrix@-1pc{
1\ar@{-}[r]\ar@{-}@/^0.8pc/[rrrr]&
2\ar@{-}[r]\ar@{-}@/^0.8pc/[rrrr]&
3\ar@{-}[r]\ar@{-}@/^0.4pc/[r]&
4\ar@{-}[r]&
5\ar@{-}[r]&
6}
$$
there are three permutations conjugating $N_{w_3}$ to $N_w$: 123564, 132546 and 312456. They all have the minimal length equal to 2.
\end{rmk}

\subsection{Multi-twisted product}

\begin{defn}Let $0<i<n$. \begin{itemize}\item The transposition $(i,i+1)\in \Sn$ is denoted by $s_i$.
\item The subgroup of $\GL_n$ generated by $\B_n$ and $N_{s_i}$ is denoted by $P_i$. It is a minimal parabolic group (properly) containing $\B_n$.\end{itemize}\end{defn}

Now let us define the main protagonist of this section. Let $\piu=(s_{i_1},s_{i_2},\dots ,s_{i_\ell})$ be a reduced word representing $\pi_w$.
We define the multiple twisted product
\[\X_{\piu}=P_{i_1} \times^{B} \dots
  \times^{B} P_{i_\ell} \times^{B} X_{w_m}\,.\]
with respect to the Borel group \(B=\B_n\). In addition $B$ acts on $\X_{\piu}$ and the morphisms
\[
  \begin{matrix}
  \begin{tikzcd}
    \X_{\piu} \arrow[r,"f_{\piu}"]
    \arrow[d,"g_{\piu}"] & \mathfrak{g}=
    M_{n \times n}(\C) \\
    P_{i_1}/B \cong \mathbb{P}^1
  \end{tikzcd}
  &&
  \begin{tikzcd}
    (p_{i_1},\dots,p_{i_\ell},y) \arrow[r, maps to] \arrow[d, maps to] & p_{i_1}\dots
    p_{i_\ell} \cdot y \\
    p_{i_1}B
  \end{tikzcd}
  \end{matrix}
\]
are $B$-equivariant.
The map \(g_{\piu}\) is a fibration with fiber
\(\X_{\piu'}\), where
\(\piu'=(s_{i_2},\dots,s_{i_\ell})\).
 In fact, we have
\(\X_{\piu}=P_{i_1} \times^{B}
\X_{\piu'}\).

\begin{theorem}\label{th:resolution}
  Let \(w=\trans{i_1}{j_1} \dots \trans{i_m}{j_m} \in \Inv_n\) and let \(\pi_w\)
  be as in Lemma \ref{sequence_lemma}.  Let
  \(\piu=(s_{i_1},s_{i_2},\dots,s_{i_\ell})\) be a reduced word representing $\pi_w$. Then
\[f_{\piu}:\X_{\piu}
\to X_w\subset \NN\]
is a $B$-equivariant resolution  of singularities of the closure of $\O_w$.
\end{theorem}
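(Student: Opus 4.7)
My plan is to verify four properties of $f_{\piu}\colon\X_{\piu}\to X_w$: smoothness of the source, inclusion of the image in $X_w$, properness (and hence surjectivity onto $X_w$), and birationality. The main tools are the iterated fibration structure of $\X_{\piu}$, Lemma \ref{sequence_lemma}, and---for birationality---Theorem \ref{Bender_Perrin_resolution}. For smoothness, the inductive identification $\X_{\piu}=P_{i_1}\times^B\X_{\piu'}$ with $\piu'=(s_{i_2},\ldots,s_{i_\ell})$ realises $\X_{\piu}$ as a Zariski-locally trivial fibration over $P_{i_1}/B\cong\mathbb{P}^1$ with fibre $\X_{\piu'}$. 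Starting from the linear subspace $X_{w_m}$, an immediate induction yields smoothness together with
\[
\dim\X_{\piu}=\ell+\dim X_{w_m}=\ell(\pi_w)+\tfrac{m(m+1)}{2}=\dim X_w,
\]
the final equality being \eqref{dim_length}.

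Next, $f_{\piu}$ is $B$-equivariant by construction and well-defined because $B$ stabilises the orbit closure $X_{w_m}$. Taking permutation-matrix representatives $n_{s_{i_k}}\in P_{i_k}$ of the simple reflections, formula \eqref{sprzeganie} of Lemma \ref{sequence_lemma} gives $f_{\piu}\bigl([n_{s_{i_1}},\ldots,n_{s_{i_\ell}},N_{w_m}]\bigr)=\pi_w\cdot N_{w_m}=N_w$, so by $B$-equivariance the whole orbit $\O_w=B\cdot N_w$ lies in the image. The image is irreducible and of dimension at most $\dim\X_{\piu}=\dim\O_w$, so $\O_w$ is dense in it, forcing the image to be contained in $\overline{\O_w}=X_w$. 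Properness I would obtain through the graph-type morphism
\[
\X_{\piu}\longrightarrow\bigl(P_{i_1}\times^B\cdots\times^B P_{i_\ell}/B\bigr)\times\NN,\qquad[p_1,\ldots,p_\ell,y]\longmapsto\bigl([p_1,\ldots,p_\ell],\,p_1\cdots p_\ell\cdot y\bigr),
\]
which is a closed embedding since $y$ is forced to equal $(p_1\cdots p_\ell)^{-1}A(p_1\cdots p_\ell)$ and the condition $y\in X_{w_m}$ is closed. The Bott--Samelson factor being projective, the projection to $\NN$ is proper, hence so is $f_{\piu}$. Consequently the image is closed, and containing the dense orbit $\O_w$ of $X_w$ it equals $X_w$.

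The principal obstacle is birationality. Properness together with the equality of dimensions already makes $f_{\piu}$ generically finite; the task is to show the generic fibre consists of a single point. After normalising a preimage of $N_w$ so that $p_1\cdots p_\ell=\pi_w$ via the $B^\ell$-equivalence built into $\X_{\piu}$, the relation $\pi_w\cdot y=N_w$ combined with $y\in X_{w_m}$ forces $y=N_{w_m}$; the individual $p_k$'s are then pinned down by the uniqueness of the reduced-word realisation on the open Bruhat cell $B\pi_w B$, together with a stabilizer comparison between $Z_B(N_w)$ and $Z_B(N_{w_m})$. A more economical route is to invoke Theorem \ref{Bender_Perrin_resolution}: Bender--Perrin's general construction produces a resolution of $X_w$ of the same shape as $f_{\piu}$ and with matching numerical invariants, which combined with our explicit identification of $N_w$ in the fibre identifies $f_{\piu}$ itself with this resolution, birational in particular.
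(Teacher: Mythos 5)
Your overall plan (smoothness, inclusion of the image in $X_w$, properness, birationality) tracks the shape of the paper's argument, and your properness step via the closed graph-type embedding into $Z_\piu\times\NN$ is a clean and valid alternative to the paper's factorisation through the Springer map. The genuine problem is the birationality step. You claim to ``normalise a preimage of $N_w$ so that $p_1\cdots p_\ell=\pi_w$ via the $B^\ell$-equivalence built into $\X_{\piu}$,'' but that equivalence only lets you adjust the product $p_1\cdots p_\ell$ on the \emph{right} by $B$: the successive relations $(\dots,p_kb^{-1},bp_{k+1},\dots)$ leave $p_1\cdots p_\ell$ unchanged except for the last step, which replaces it by $p_1\cdots p_\ell\, b^{-1}$. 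So you can absorb only $b_2$ in a Bruhat normal form $p_1\cdots p_\ell=b_1M_{\pi_w}b_2$; the left factor $b_1$ persists. Consequently $y$ is not forced to equal $N_{w_m}$, and ``uniqueness of the reduced-word realisation'' doesn't then pin down the $p_k$'s. Your fallback to Theorem~\ref{Bender_Perrin_resolution} also doesn't close the gap: that theorem asserts the \emph{existence} of some sequence of minimal parabolics and some vector space giving a resolution, not that the specific word coming from Lemma~\ref{sequence_lemma} does, so there is nothing to ``identify $f_{\piu}$ with.''

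The paper's route to birationality avoids these issues. Since $f_\piu$ sends the explicit point $\underline w=[M_{s_{i_1}},\dots,M_{s_{i_\ell}},N_{w_m}]$ to $N_w$ by formula~\eqref{sprzeganie}, the $B$-orbit $B\cdot\underline w$ surjects onto $\O_w$; by the dimension equality~\eqref{dim_length} it must be open dense in $\X_\piu$, and the complement $\X_\piu\setminus B\cdot\underline w$, being of strictly smaller dimension, cannot dominate $\O_w$. Thus the whole fibre over $N_w$ lies in $B\cdot\underline w$ and equals the homogeneous space $\operatorname{Stab}_B(N_w)/\operatorname{Stab}_B(\underline w)$. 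Because $\operatorname{Stab}_B(N_w)$ is connected (an elementary check) and the subgroup has finite index, the quotient is a single point, so the covering $B\cdot\underline w\to\O_w$ has degree one. If you want to keep your framework, replacing the ``normalisation'' paragraph by this orbit-and-connected-stabilizer argument is both shorter and correct.
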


\begin{proof} The map $f_{\piu}$ is a composition of two proper morphism. The first is the Springer resolution $\GL_n\times^{B} \NN\to \mathfrak g$ with the Bott-Samelson resolution of the (bundle over) Schubert variety
$(B\pi B)\times^B X_{w_m}$. Therefore the map  $f_{\piu}$ is proper.
\medskip

The map $f_\piu$ sends the sequence $\underline w=[{s_{i_1}},{s_{i_2}},\dots,{s_{i_m}},N_{w_m}]$ to
$(s_{i_1}s_{i_2}\dots s_{i_m})\cdot N_{w_m}=N_w$. (Here we identified the transposition $s_{i_a}$ with its matrix $M_{s_{i_a}}$ to avoid staircase indices.) Therefore the whole orbit $\O_w$ is contained in the image. By Lemma \ref{sequence_lemma} $\dim \O_w=\dim \X_{\underline \pi}$, therefore the  map $f_{\piu}$ restricted to the orbit of $\underline w$ is a covering. On the other hand (by an elementary argument) the stabilizer of $N_w$ in $B$ is connected. Therefore this covering is trivial. Hence $f_{\underline \pi}$ is a resolution of singularities.\end{proof}

We have described in a constructive way the resolution of $X_w$ of
Theorem \cite[Corollary 2.4.3]{BenderPerrin}. Compare [Lemma 7.3.1, \emph{loc.~cit.}].
\medskip

\begin{rmk} Suppose $\piu=s_{i_1}s_{i_2}\dots s_{i_\ell}$ is a reduced word. Then for $0\leq k\leq\ell$ the composition $\tau
=s_{i_k}\circ s_{i_{k+1}}\circ \dots \circ s_{i_\ell}$ is a permutation which preserves upper-triangularity of $N_{w_m}$, i.e. $\tau N_{w_m}\tau^{-1}$ is upper triangular. This follows from the fact that if $a<b$ and $\pi(a)<\pi(b)$ then the inequality is preserved for the truncated word.
Suppose $\X_\piu\to X_w$ is a resolution of singularities or equivalently $N_w=\piu\cdot N_{w_m}$ and  $\dim(\X_\piu)=\dim( X_w)$.
Let $\underline\tau=(s_{i_k},s_{i_{k+1}},\dots ,s_{i_\ell})$.
Then at each step the matrix $N=\tau\cdot N_{w_m}$ is upper-triangular and $\X_{\underline\tau}$ is a resolution of the $\B$-orbit of $N$.
 \end{rmk}
\medskip

\begin{ex} Let \label{ex1634}
$w=\trans 16\trans 34\in \Inv_7$.
The permutation given by Lemma \ref{sequence_lemma} is equal to
$\piu=s_2s_6s_4s_5s_6$.
\def\kwadrat#1#2#3#4{
\begin{tikzpicture}[thick,scale=0.2]
\draw(0,0) rectangle (8,8);
\foreach \x in {0,1,2,3,4,5,6,7}
  \draw[fill=gray] (\x ,8-\x ) rectangle (\x+1 ,7-\x );
\draw[pattern=north west lines, pattern color=red] (#2,8) rectangle (8,8-#1);
\draw[fill=black] (#2+0.5,8.5-#1) circle (0.25);
\draw[pattern=north east lines, pattern color=blue] (#4,8) rectangle (8,8-#3);
\draw[fill=black] (#4+0.5,8.5-#3) circle (0.25);
\end{tikzpicture}}

Let us analyze all the intermediate steps obtained inductively which lead to the resolution of the orbit closures. We call whole process \emph{evolution of the dots}.

\vskip10pt

$$\xymatrixcolsep{1pc}\begin{matrix}
\xymatrix@-0.5pc{
1\ar@{-}[r]\ar@{-}@/^1.5pc/[rrrrr]&
2\ar@{-}[r]\ar@{-}@/^1.5pc/[rrrrr]&
3\ar@{-}[r]&
4\ar@{-}[r]&
5\ar@{-}[r]&
6\ar@{~}[r]_{s_6}&
7}
&

\xymatrix@-0.5pc{
1\ar@{-}[r]\ar@{-}@/^1.5pc/[rrrrrr]&
2\ar@{-}[r]\ar@{-}@/^1pc/[rrrr]&
3\ar@{-}[r]&
4\ar@{-}[r]&
5\ar@{~}[r]_{s_5}&
6\ar@{-}[r]&
7}
&
\xymatrix@-0.5pc{
1\ar@{-}[r]\ar@{-}@/^1.5pc/[rrrrrr]&
2\ar@{-}[r]\ar@{-}@/^1pc/[rrr]&
3\ar@{-}[r]&
4\ar@{~}[r]_{s_4}&
5\ar@{-}[r]&
6\ar@{-}[r]&
7}
\\
\phantom\to\hfill\kwadrat 1627\hfill\stackrel{s_6}\to&
\phantom\to\hfill\kwadrat 1726\hfill\stackrel{s_5}\to&
\phantom\to\hfill\kwadrat 1725\hfill\stackrel{s_4}\to&
\\ \\ \\
\xymatrix@-0.5pc{
1\ar@{-}[r]\ar@{-}@/^1.5pc/[rrrrrr]&
2\ar@{-}[r]\ar@{-}@/^1pc/[rr]&
3\ar@{-}[r]&
4\ar@{-}[r]&
5\ar@{-}[r]&
6\ar@{~}[r]_{s_6}&
7}
&
\xymatrix@-0.5pc{
1\ar@{-}[r]\ar@{-}@/^1.5pc/[rrrrr]&
2\ar@{~}[r]_{s_2}\ar@{-}@/^1pc/[rr]&
3\ar@{-}[r]&
4\ar@{-}[r]&
5\ar@{-}[r]&
6\ar@{-}[r]&
7}
&
\xymatrix@-0.5pc{
1\ar@{-}[r]\ar@{-}@/^1.5pc/[rrrrr]&
2\ar@{-}[r]&
3\ar@{-}[r]\ar@{-}@/^1pc/[r]&
4\ar@{-}[r]&
5\ar@{-}[r]&
6\ar@{-}[r]&
7}
\\
\phantom\to\hfill\kwadrat 1724\hfill\stackrel{s_6}\to&
\phantom\to\hfill\kwadrat 1624\hfill\stackrel{s_2}\to&
\kwadrat 1634
\end{matrix}
$$
\end{ex}

\subsection{Torus fixed points in the  resolution}\label{sec:fix}
Let $\T=\T_n\times \C^*$, where $\T_n\subset \B_n$ is the maximal torus consisting of the diagonal matrices.
The torus $\T_n$ acts on $\NN$, $\X_{\piu}$, $X_w$, $P_i/B$ etc. as a subgroup of the Borel group. The factor $\C^*$ of $\T$ acts
on $\NN$ as the scalar multiplication.
The resolution $\X_{\piu}$ is a fiber bundle over the Bott-Samelson variety
$$Z_{\piu}=P_{i_1} \times^{B}P_{i_2} \times^{B} \dots
  \times^{B} P_{i_\ell}/B\,,$$
which is a resolution of the Schubert variety $B\pi B/B\subset \GL_n/B$, see \cite[\S2.2]{BrionKumar}. The action of the factor $\C^*$ lifts to
$\X_{\piu}$ as the scalar multiplication in the fibers of the projection $\X_\piu\to Z_\piu$.
\medskip

The fixed points of the torus $\T_n$ (as well as of $\T$) on $\X_{\piu}$ are contained in the zero section, i.e. in
$Z_{\piu}$. They are indexed by the subwords of $\piu$, see \cite[\S3.2]{RW}. Hence $|\X_{\piu}^\T|=2^m$.
Our further computation is an induction based on the fibration $g_{\piu}:\X_{\piu}\to P_i/B\simeq\PP^1$.
The fixed point $B/B\in P_i/B$ will be denoted by $p_0$ and the other fixed point $s_iB/B$ will be denoted $p_\infty$.
The fiber over $p_0$ is equivariantly isomorphic to $\X_{\underline \pi'}$ where the word $\piu'$ arises by omitting the first letter in  $\piu$. The fiber over $p_\infty$ is isomorphic to $\X_{\underline \pi'}$ too but the action of the torus is twisted.
For $t\in \T$ and $[s_{i_1},x]\in g^{-1}_{\piu}(p_\infty)$
$$t\cdot [s_{i_1},x]=[ts_{i_1},x]=[s_{i_1}s_{i_1} t s_{i_1},x]=[s_{i_1}, s_{i_1} t s_{i_1}x]\,,$$
i.e. the action is twisted by  $s_{i_1}$.

\section{Fundamental classes: localization and induction}
\label{sec:indukcja1}
\subsection{Equivariant push-forward}
We briefly describe the main tool which allows efficient computation of fundamental classes in the situation when a torus is acting and there are finitely many fixed points. The method is based on the equivariant push-forward and may be applied in many contexts, for various cohomology theories. We are primarily interested in ordinary cohomology and K-theory, but for sake of compactness we present the general formula.
\medskip

Suppose $h_\T(-)$ is a $\T$-equivariant version of a complex oriented  cohomology theory in the sense \cite[\S1]{LevineMorel}.
Let $f:M\to N$ be a proper map of smooth complex varieties. Let us assume that the fixed point sets $M^\T$ and $N^\T$ are finite.
From the formal properties of complex oriented theories (see \cite{tomDieck}) it follows that for any class
$\alpha\in h_\T(M)$ and any fixed point $p\in N^\T$
\begin{equation}\label{loc_formula}\frac{f_*(\alpha)_{|p}}{e^h(T_pN)}=\sum_{q\in f^{-1}(p)\cap M^\T}\frac{\alpha_{|q}}{e^h(T_qM)}\,.\end{equation}
Here for a $\T$ representation $V$ the class $e^h(V)\in h_\T(pt)$ denotes the Euler class in the cohomology theory $h(-)$.
For the classical cohomology the Euler class of a one dimensional representation is equal to the weight of the action, i.e.
$$e(L)=\omega\in \Hom(\T,\C^*)\simeq H^2_\T(pt)\,.$$
For K-theory
$$e^K(L)=1-L^*\in {\rm R}(\T)\simeq K_\T(pt)\,.$$
Formula \eqref{loc_formula} holds in the fraction field of $h_\T(pt)$, denoted by $(h_\T(pt))$, therefore
it allows to compute the direct image provided that $h_\T(N)\to (h_\T(pt))\otimes_{h_\T(pt)}h_\T(N)$ is injective. Otherwise the direct image is computed up to the $h_\T(pt)$-torsion. We will apply the localization theorem for $N=\NN$, for which $h_\T(\NN)\simeq h_\T(pt)$.
\medskip

Let $X\subset N$ be a singular variety and let $f:M\to N$ be a proper map, which is a resolution of singularities of $X=f(M)$. The push forward of the unit element $\jeden_M\in h_\T(M)$  plays the role of the fundamental class of $X$ in the theory $h(-)$. In general it depends on the resolution except very few cases. For classical cohomology theory it agrees with the usual notion of the fundamental class. For K-theory it does not depend on the resolution in general, and it has good properties when $X$ has rational singularities, see \cite{Feher}. To compute $f_*(\jeden_M)$ we specialize the formula \eqref{loc_formula} and we obtain the expression
\begin{equation}\label{loc_formula_fun}\frac{f_*(\jeden_M)_{|p}}{e^h(T_pN)}=\sum_{q\in f^{-1}(p)\cap M^\T}\frac{1}{e^h(T_qM)}\,.\end{equation}
In our case, for $X=X_w$, $M=\X_\piu$ and $f=f_{\piu}:\X_{\piu}\to \NN$.

\subsection{The induction} \label{fund-indukcja}
We will use the notation introduced in \S\ref{sec:fix}.
All the formulas below hold in the fraction field of $h_\T(pt)$. The simple reflections $s_i$ act on $h_\T(pt)$ through the automorphisms of the torus. Below $t_j$ denotes the one dimensional representation given by character, which is the projection of the torus on $j$-th coordinate.
\begin{theorem}\label{th:fund-indukcja} For $x\in(h_\T(pt))$ let $$\bett^h_i(x)=\frac x{e^h(t_{i+1}t_i^{-1})}+\frac{s_i(x)}{e^h(t_it_{i+1}^{-1})}\,.$$
Suppose $\piu=(s_{i_1},s_{i_2},\dots,s_{i_\ell})$, $\piu'=(s_{i_2},s_{i_3},\dots,s_{i_\ell})$.
Then
$$\frac{f_{\piu*}(\jeden_{\X_\piu})}{e^h(\NN)}=\bett_{i_1}\left(\frac{f_{\piu'*}(\jeden_{\X_{\piu'}})}{e^h(\NN)}\right)\,.$$
\end{theorem}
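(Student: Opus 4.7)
The plan is to apply the equivariant localization formula \eqref{loc_formula_fun} to $f_\piu$ and to $f_{\piu'}$, and then to compare the two fixed-point sums via the bundle structure $g_\piu:\X_\piu\to P_{i_1}/B\cong\PP^1$. The $\C^*$-factor of $\T$ scales $\NN$ with positive weight, so the origin is the only $\T$-fixed point of $\NN$; consequently $h_\T(\NN)\cong h_\T(pt)$ and the class $f_{\piu*}(\jeden_{\X_\piu})/e^h(\NN)$ is computed by summing the local contributions $1/e^h(T_q\X_\piu)$ over the finite set $\X_\piu^\T$, and likewise for $\piu'$.

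Next I would split $\X_\piu^\T$ according to its image in $\PP^1$. As recalled in \S\ref{sec:fix}, the fiber of $g_\piu$ over $p_0=B/B$ is equivariantly isomorphic to $\X_{\piu'}$ with its standard $\T$-action, while the fiber over $p_\infty=s_{i_1}B/B$ is isomorphic to $\X_{\piu'}$ with the $\T$-action twisted by conjugation by $s_{i_1}$. For $q$ lying over $p_0$ with underlying $q'\in\X_{\piu'}^\T$ the tangent space splits equivariantly as $T_q\X_\piu\cong T_{q'}\X_{\piu'}\oplus T_{p_0}\PP^1$, and for $q$ over $p_\infty$ one gets $T_q\X_\piu\cong s_{i_1}\!\cdot\! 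T_{q'}\X_{\piu'}\oplus T_{p_\infty}\PP^1$. A direct chart computation on $P_{i_1}/B$ shows that the tangent weight at $p_0$ equals $-\alpha_{i_1}=t_{i_1+1}t_{i_1}^{-1}$ and at $p_\infty$ equals its $s_{i_1}$-image $t_{i_1}t_{i_1+1}^{-1}$.

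Applying multiplicativity of the Euler class together with the naturality identity $e^h(s_{i_1}\cdot V)=s_{i_1}(e^h(V))$, the sum reorganizes as
$$\sum_{q\in\X_\piu^\T}\frac{1}{e^h(T_q\X_\piu)}=\frac{1}{e^h(t_{i_1+1}t_{i_1}^{-1})}\sum_{q'\in\X_{\piu'}^\T}\frac{1}{e^h(T_{q'}\X_{\piu'})}+\frac{1}{e^h(t_{i_1}t_{i_1+1}^{-1})}\,s_{i_1}\!\left(\sum_{q'\in\X_{\piu'}^\T}\frac{1}{e^h(T_{q'}\X_{\piu'})}\right).$$
Interpreting each inner sum via \eqref{loc_formula_fun} applied to $f_{\piu'}:\X_{\piu'}\to\NN$ rewrites the right-hand side as
$$\frac{1}{e^h(t_{i_1+1}t_{i_1}^{-1})}\cdot\frac{f_{\piu'*}(\jeden_{\X_{\piu'}})}{e^h(\NN)}+\frac{1}{e^h(t_{i_1}t_{i_1+1}^{-1})}\cdot s_{i_1}\!\left(\frac{f_{\piu'*}(\jeden_{\X_{\piu'}})}{e^h(\NN)}\right),$$
which is exactly $\bett_{i_1}$ applied to $f_{\piu'*}(\jeden_{\X_{\piu'}})/e^h(\NN)$, proving the claim.

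The main subtlety, and the step I expect to require the most care, is the bookkeeping of the twist over $p_\infty$: one must check that the weights on the tangent space of the twisted fiber are obtained from those on $\X_{\piu'}$ by applying $s_{i_1}$ (and not $s_{i_1}^{-1}$). This follows from the explicit identity $t\cdot[s_{i_1},x]=[s_{i_1},(s_{i_1}^{-1}ts_{i_1})\cdot x]$ recorded in \S\ref{sec:fix}, combined with the fact that $s_{i_1}$ is an involution. Beyond this point the argument is purely formal and uses only multiplicativity of $e^h$ and the localization formula \eqref{loc_formula_fun}, so it works uniformly for any complex-oriented equivariant cohomology theory $h(-)$; in particular it yields simultaneously the cohomological and the K-theoretic versions of the statement.
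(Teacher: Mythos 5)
Your proposal is correct and follows essentially the same approach as the paper's proof: apply the localization formula \eqref{loc_formula_fun}, split the $\T$-fixed locus of $\X_\piu$ according to the fiber of $g_\piu$ over $p_0$ versus $p_\infty$, identify the tangent weights of $\PP^1$ at the two fixed points, and account for the $s_{i_1}$-twist of the torus action on the fiber over $p_\infty$ when computing Euler classes. The paper's argument is a terser rendering of the same steps, stated via the exact sequence $T_q\X_{\piu'}\hookrightarrow T_q\X_\piu\twoheadrightarrow T_p\PP^1$ rather than a direct-sum splitting.
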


\begin{proof}Let $p=p_0$ or $p_\infty$. The tangent space at $q\in g_\piu^{-1}(p)$ fits to the exact sequence
$$T_q\X_{\piu'}\hookrightarrow T_q\X_{\piu}\twoheadrightarrow T_{p}\PP^1\,.$$
The tangent space at $p_0\in P_i/B$ is equal to $t_{i+1}t_i^{-1}$, at the point $p_\infty$ it is dual.
The fixed point set decomposes into two parts. \begin{itemize}
\item The points belonging to $g^{-1}_{\piu}(p_0)$:
here
$$e^h(T_q\X_{\piu})=e^h(T_q\X_{\piu'})e^h(t_{i+1}t_i^{-1})\,.$$
\item The points belonging to $g^{-1}_{\piu}(p_\infty)$: the action of $\T$ at the fiber $g_\piu^{-1}(p_\infty)$ is twisted by  $s_{i_1}$:
$$e^h(T_q\X_{\piu})=s_{i_1}\left(e^h(T_q\X_{\piu'})\right)e^h(t_it_{i+1}^{-1})\,.$$
 \end{itemize}
From the localization  formula \eqref{loc_formula_fun} we obtain
\begin{align*}\frac{f_{\piu*}(\jeden_{\X_\piu})}{e^h(\NN)}&=
\sum_{q\in g_\piu^{-1}(p_0)}\frac 1{e^h(T_q\X_{\piu'})}\cdot\frac 1{e^h(t_{i+1}t_i^{-1})}+
\sum_{q\in g_\piu^{-1}(p_\infty)}\frac 1{s_{i_1}(e^h(T_q\X_{\piu'}))}\cdot\frac 1{e^h(t_{i+1}t_i^{-1})}\\
&=
\frac{f_{\piu'*}(\jeden_{\X_{\piu'}})}{e^h(\NN)}\cdot\frac 1{e^h(t_{i+1}t_i^{-1})}
+s_{i_1}\left(\frac{f_{\piu'*}(\jeden_{\X_{\piu'}})}{e^h(\NN)}\right)\cdot\frac 1{e^h(t_it_{i+1}^{-1})}\,.\end{align*}
\end{proof}

For the classical theory the operator $\bett_i$ specializes to  the divided difference (up to the sign)
\begin{equation}\label{nasze_partial}\bett^H_i(x)=\frac x{t_{i+1}-t_i}+\frac {s_i(x)}{t_i-t_{i+1}}=-\frac{x-s_i(x)}{t_i-t_{i+1}}=-\partial_i(x)\,.\end{equation}
In the case of ordinary cohomology our computation coincides with \cite[\S4.1, Lemma 1]{KnutsonZinn:2007}.
\medskip

For K-theory we obtain a variant of the isobaric divided difference
$$\bett^K_i(x)=\frac x{1-t_it_{i+1}^{-1}}+\frac {s_i(x)}{1-t_{i+1}t_i^{-1}}= \frac{t_{i+1}x-t_is_i(x)}{t_{i+1}-t_i}\,.$$
If we considered the lower-triangular matrices with the action of $B_-$ we would obtain the classical formulas.

It remains to give a formula for the starting point of the induction. The variety $X_{w_m}$ is smooth and closed (a vector subspace), thus its fundamental class in any theory is well defined. It is equal to the product of the Euler classes of coordinates $e^h(u\,t_it_j^{-1})$ for $(i,j)$ not belonging to the corner defining $X_{w_m}$.
\begin{prop}\label{fund-start}The class of the closure of the minimal orbit of rank $m$ divided by the Euler class is equal to
$$\frac{[X_{w_m}]_h}{e^h(\NN)}=\prod_{ n-m+i\leq j\leq n} e^h(u\,t_it_j^{-1})^{-1}\,.$$
In particular
$$\frac{[X_{w_m}]}{e(\NN)}=\prod_{ n-m+i\leq j\leq n} (u+t_i-t_j)^{-1}\,.$$
and
$$\frac{[X_{w_m}]_K}{e^K(\NN)}=\prod_{ n-m+i\leq j\leq n} (1-u\,t_j/t_i)^{-1}\,.$$
\end{prop}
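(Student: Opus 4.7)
The plan is to exploit that $X_{w_m}$, unlike a generic orbit closure, is an honest $\T$-invariant linear subspace of the ambient vector space $\NN$, so the fundamental class reduces to a direct Euler-class computation on a sub-representation. First, I would spell out the coordinate description: as a $\T = \T_n \times \C^*$-representation, $\NN = \bigoplus_{1 \leq i < j \leq n} \C_{(i,j)}$, where $\C_{(i,j)}$ has weight $u\, t_i t_j^{-1}$ with $u$ the character of the scaling factor $\C^*$. The description of $X_{w_m}$ as $\{A \in \NN : a_{ij}=0 \text{ for } j+i \leq 2n-m\}$ identifies it (after rewriting the condition as $j \geq n-m+i$) with the direct sum of those weight spaces $\C_{(i,j)}$ with $n-m+i \leq j \leq n$.

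Second, I would invoke the standard principle that for a $\T$-invariant linear subspace $V$ of a $\T$-representation $W$, the equivariant fundamental class in $h_\T^*(W) \cong h_\T^*(\mathrm{pt})$ is the Euler class of the complementary representation, namely $[V]_h = e^h(W/V)$. For ordinary cohomology this is the self-intersection formula; for K-theory it is computed from the Koszul resolution of $\O_V$ by the coordinates of $W/V$. Applying this with $V = X_{w_m}$ and $W = \NN$ yields $[X_{w_m}]_h = \prod_{(i,j) : \, j < n-m+i} e^h(u\, t_i t_j^{-1})$, and dividing by $e^h(\NN) = \prod_{1 \leq i < j \leq n} e^h(u\, t_i t_j^{-1})$ leaves precisely the inverse product over the ``corner'' pairs, matching the general formula in the proposition.

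Third, the two specialized formulas are then immediate from the standard formulas for the Euler class of a one-dimensional representation: in ordinary cohomology $e(L_\chi) = \chi$ in additive notation, giving $e(u\, t_i t_j^{-1}) = u + t_i - t_j$; and in K-theory $e^K(L_\chi) = 1 - \chi^{-1}$, which under the paper's conventions produces the factor $1 - u\, t_j/t_i$. I do not anticipate a real obstacle, since everything is bookkeeping of weights on a coordinate subspace. The only point worth checking carefully is that the orbit closure $X_{w_m}$ really is this coordinate subspace: the inclusion $\supseteq$ is clear because $N_{w_m}$ has nonzero entries exactly at the positions $(k,n-m+k)$ and $B$-conjugation by elementary upper-triangular generators sweeps out any entry with $n-m+i \leq j$, while the reverse inclusion follows from the dimension count $\dim \O_{w_m} = \tfrac{m(m+1)}{2}$ supplied by Lemma \ref{lem:orbit_dim}, which matches the cardinality of the index set $\{(i,j) : 1 \leq i,\ n-m+i \leq j \leq n\}$.
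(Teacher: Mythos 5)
Your approach is the paper's own, merely spelled out: the paper's entire justification for Proposition \ref{fund-start} is the one-sentence remark immediately preceding it, that $X_{w_m}$ is a $\T$-invariant coordinate subspace and so its fundamental class is the product of Euler classes of the complementary coordinates. You have expanded this correctly, including the self-intersection/Koszul justification of $[V]_h=e^h(W/V)$ and the cohomological specialization $e(u\,t_it_j^{-1})=u+t_i-t_j$.

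Two caveats, both traceable to slips in the source. First, the condition $j+i\leq 2n-m$ quoted from the text is \emph{not} equivalent to $j\geq n-m+i$, so ``after rewriting'' is not a legitimate step as written; the former cuts out a much smaller anti-diagonal region. The inequality in the paper appears to be a misprint for $j-i<n-m$. The cleaner derivation is exactly the one you sketch at the end: $N_{w_m}$ has its $1$'s at positions $(k,n-m+k)$, conjugation by $B$ moves entries only north-east so $X_{w_m}\subseteq\{a_{ij}=0:\,j<n-m+i\}$, and equality follows from the dimension count $\tfrac{m(m+1)}{2}$ (via Lemma \ref{lem:orbit_dim} or \eqref{dim_length} with $\ell(\pi_{w_m})=0$), which matches the cardinality of $\{(i,j):\,n-m+i\leq j\leq n\}$. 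Lead with that rather than with a purported rewriting. Second, under the paper's convention $e^K(L_\chi)=1-\chi^{-1}$ applied to $\chi=u\,t_it_j^{-1}$, the factor is $1-t_j/(u\,t_i)$, not $1-u\,t_j/t_i$; the companion proposition for $\mC(\O_{w_m})$ uses $1-t_j/(u\,t_i)$ consistently, so the K-theoretic display in Proposition \ref{fund-start} has the power of $u$ inverted, and your computation should produce the former rather than conform to the printed formula.
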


\begin{rmk}\label{non_ambiguity}
We note that  $X_w$ has rational singularities by
\cite[Cor. 0.0.4]{BenderPerrin}. This is clear from the form of the resolution given in Theorem \ref{th:resolution}.
Therefore the following notions coincide:
\begin{itemize}
\item $f_*([\O_{\tilde X_w}])$ for any resolution of singularities $f:\tilde X_w\to \NN$, $f(\tilde X_w)=X_w$,
\item the class of the coherent sheaf $\O_{X_w}$,
\item the specialization of the motivic Chern class $\mC(X_w\subset \NN)_{y=0}$.
\end{itemize}
See \cite[\S14]{WeSEL}, \cite{Feher}.
\end{rmk}
\section{Characteristic classes}
\label{sec:indukcja2}

\def\hb{\text{\textcrh}}
\subsection{The induction}
For a generalized equivariant cohomology theory $h_-(-)$ we define the characteristic classes enlarging the transformation group. For a vector bundle $E
\to X$, possibly equivariant with respect to a group $G$ we define
$$c^h(E)=e^h(E\otimes \C_\hb)\in h_{G\times \C^*}(X)\simeq h_G(X)\otimes h_{\C^*}(pt)\,,$$
where $\C_\hb$ is the natural representation of $\C^*$.
In this section we only consider two generalized cohomology theories: the classical one and K-theory.
Then for a line bundle $L$
\begin{align*}
c^H(L)&=\hb+c_1(L)\\
c^K(L)&=1-(\hb L)^{-1}\,.
\end{align*}
Setting $\hb=1$ in the first case we obtain the usual notion of the Chern class. For K-theory substituting $\hb=-y^{-1}$ we arrive to $\lambda_yE^*$, which is used to construct motivic Chern classes. (See \cite[Rem. 2.4]{FRW} for a similar convention.)
\medskip
Suppose $f:M\to N$ is a proper map of smooth varieties. We define
$$c^h(f:M\to N)=f_*(c^h(TM))\,.$$
For classical homology and K-theory this assignment can be extended to $K^G(Var/\NN)$ --- the Grothendieck ring of $G$-varieties over $\NN$, see \cite{BSY,AMSS}.
\medskip

Let $\X^o_\piu\subset \X_\piu$ be the open $B$-orbit. If $\pi\cdot N_{w_m}=N_w$ and $\dim(\X^o_\piu)=\dim(\O_w)$ then $\X^o_\piu$ is mapped isomorphically to $\O_w$. The orbit $\X^o_\piu$ can be  described geometrically:
$$\X^o_\piu=\left(P_{i_1}\times^B\X^o_{\piu'}\right)\setminus g_\piu^{-1}(p_0)\,.$$
The effect on characteristic classes is the following:
\begin{theorem} \label{char-indukcja} For $x\in(h_\T(pt))$ let
$$A^h_i(x)=\left(\frac {c^h(t_{i+1}t_i^{-1})}{e^h(t_{i+1}t_i^{-1})}-1\right)x+\frac{c^h(t_it_{i+1}^{-1})}{e^h(t_it_{i+1}^{-1})}s_i(x)\,.$$
Suppose $\piu=(s_{i_1},s_{i_2},\dots,s_{i_\ell})$, $\piu'=(s_{i_2},s_{i_3},\dots,s_{i_\ell})$.
Then
$$\frac{c^h(\X^o_\piu\to \NN)}{e^h(\NN)}=A_{i_1}^h\left(\frac{c^h(\X_{\piu'}^o\to\NN)}{e^h(\NN)}\right)\,.$$
\end{theorem}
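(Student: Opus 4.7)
The plan is to mirror the strategy of the proof of Theorem~\ref{th:fund-indukcja}: exploit the fibration $g_\piu\colon\X_\piu\to P_{i_1}/B\cong\PP^1$ and apply the localization formula \eqref{loc_formula} at the two $\T$-fixed points $p_0,p_\infty\in\PP^1$, using the split
\[
T_q\X_\piu\;\cong\;T_q\X_{\piu'}\oplus T_{g_\piu(q)}\PP^1
\]
of the $\T$-representation on the tangent space recalled in \S\ref{sec:fix}. The new ingredient, compared with the fundamental class formula, is that we must compute the characteristic class of the open $B$-orbit $\X^o_\piu$ rather than of the whole resolution $\X_\piu$; this accounts for the extra $-1$ appearing in the coefficient of $x$ in the definition of $A^h_i$ (absent from $\bett^h_i$) and is handled by a motivic decomposition.

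First I would establish the identity
\[
c^h(\X^o_\piu\to\NN)=c^h(P_{i_1}\times^B\X^o_{\piu'}\to\NN)-c^h(\X^o_{\piu'}\to\NN)
\]
in $h_\T(\NN)$. The set-theoretic equality $\X^o_\piu=(P_{i_1}\times^B\X^o_{\piu'})\setminus g_\piu^{-1}(p_0)$ given in the excerpt identifies the removed piece $g_\piu^{-1}(p_0)\cap(P_{i_1}\times^B\X^o_{\piu'})$ with $\X^o_{\piu'}$, embedded as the fiber over $p_0$. Since the restriction of $f_\piu$ to this fiber coincides with $f_{\piu'}$, functoriality of $c^h$ under proper push-forward gives $c^h(\X^o_{\piu'}|_{p_0}\to\NN)=c^h(\X^o_{\piu'}\to\NN)$, and motivic additivity yields the displayed identity.

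Next I would compute the first summand by localization. Since $\X^o_{\piu'}$ is open in $\X_{\piu'}$, the subset $P_{i_1}\times^B\X^o_{\piu'}$ is open in the smooth variety $\X_\piu$. Its $\T$-fixed points lie over $p_0$ and $p_\infty$, with the fiber $\T$-action untwisted in the former case and twisted by $s_{i_1}$ in the latter; the tangent line at $p_0$ carries weight $t_{i_1+1}t_{i_1}^{-1}$ and at $p_\infty$ the dual. Using multiplicativity $c^h(T_q\X_\piu)=c^h(T_q\X_{\piu'})\cdot c^h(T_{g_\piu(q)}\PP^1)$ and performing the sum over fixed points exactly as in the proof of Theorem~\ref{th:fund-indukcja}, one obtains
\begin{align*}
\frac{c^h(P_{i_1}\times^B\X^o_{\piu'}\to\NN)}{e^h(\NN)}
&=\frac{c^h(t_{i_1+1}t_{i_1}^{-1})}{e^h(t_{i_1+1}t_{i_1}^{-1})}\cdot\frac{c^h(\X^o_{\piu'}\to\NN)}{e^h(\NN)}\\
&\quad+\frac{c^h(t_{i_1}t_{i_1+1}^{-1})}{e^h(t_{i_1}t_{i_1+1}^{-1})}\cdot s_{i_1}\hskip-2pt\left(\frac{c^h(\X^o_{\piu'}\to\NN)}{e^h(\NN)}\right).
\end{align*}
Subtracting $c^h(\X^o_{\piu'}\to\NN)/e^h(\NN)$ from the coefficient of the $p_0$-summand produces precisely the operator $A^h_{i_1}$ evaluated on $c^h(\X^o_{\piu'}\to\NN)/e^h(\NN)$.

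The step I expect to be the main obstacle is the clean application of the localization formula to the constructible class $[\X^o_\piu\to\NN]$; in particular, one must argue that at each $\T$-fixed point of the open subvariety $P_{i_1}\times^B\X^o_{\piu'}$ the local coefficient is the ``top'' contribution $c^h(T_q\X_\piu)/e^h(T_q\X_\piu)$, which requires appealing to the motivic extension of $c^h$ from \cite{BSY,AMSS} together with the compatibility of this formalism with the $\PP^1$-bundle structure of $g_\piu$. Once these formalities are in place, the computation is a direct transcription of the scheme already used in \S\ref{fund-indukcja}.
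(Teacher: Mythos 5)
Your proposal is correct and takes essentially the same approach as the paper's proof: the additivity/scissor relation for $c^h$ applied to the decomposition $\X^o_\piu=(P_{i_1}\times^B\X^o_{\piu'})\setminus g_\piu^{-1}(p_0)$, followed by localization over the two fixed points of $\PP^1$ with the $s_{i_1}$-twist at $p_\infty$, exactly as in the paper. Your version spells out the fiber-at-$p_0$ identification and the formal justifications slightly more explicitly, but the argument is the same.
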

Note that our construction is valid also for the permutations $\pi$ which are not given by  Lemma \ref{sequence_lemma}.
\begin{proof}
We use the additivity relation:
\begin{align*}c^h\left((P_{i_1}\times^B\X^o_{\piu'})\setminus g_\piu^{-1}(p_0)\to \NN\right)&=
c^h(P_{i_1}\times^B\X^o_{\piu'}\to\NN)-c^h(g_\piu^{-1}(p_0)\to \NN)\\&=
c^h(P_{i_1}\times^B\X^o_{\piu'}\to\NN)-c^h(\X^o_{\piu'}\to\NN)\,.\end{align*}
We argue as in the proof of Theorem \ref{fund-indukcja}. We note that
$$c^h\left(P_{i_1}\times^B\X^o_{\piu'}\to\NN\right)_q=c^h(\X^o_{\piu'}\to\NN)_q\cdot c^h(T_p\PP^1)\,.$$
The torus action on the first factor has to be twisted by $s_i$ if $p=p_\infty$.
Comparing with the calculation for fundamental classes we multiply the summands by $\frac{c^h(T_p\PP^1)}{e^h(T_p\PP^1)}$, not just by $\frac1{e^h(T_p\PP^1)}$.
\end{proof}
\begin{rmk} The operator $A^h_i$ can also be written as
$$A^h_i(x)=\bett^h_i\left(\frac{c^h(t_{i+1}t_i^{-1})}{e^h(t_{i+1}t_i^{-1})}\,x\right)-x\,.$$
\end{rmk}

For Chern-Schwartz-MacPherson classes we have
\begin{cor}\label{csm-indukcja} Suppose that $\dim (\O_{w})=\dim (\O_{w'})+1$ and $s_i\cdot N_{w'}=N_w$. Then
$$\frac{\csm(\O_{w}\subset\NN)}{e(\NN)}=A_i\left(\frac{\csm(\O_{w'}\subset\NN)}{e(\NN)}\right)\,,$$
where
$$A_i(x)=
\frac 1{t_{i+1}-t_i}x+\frac {1+t_i-t_{i+1}}{t_i-t_{i+1}} s_i(x)\,.$$
Alternatively
$$A_i(x)=
\bett_i
\left(\frac {1+t_{i+1}-t_{i}}{t_{i+1}-t_{i}} x\right)-x=
\bett_i
\left( x\right)+s_i(x)=-\partial_i
\left( x\right)+s_i(x)\,.$$
\end{cor}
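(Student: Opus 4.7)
The corollary follows from Theorem \ref{char-indukcja} specialized to $h=H$ with $\hb=1$, once the geometry matches: one needs compatible resolutions of $X_w$ and $X_{w'}$ whose associated words differ only in the first letter $s_i$.

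For the geometric input, I would fix any reduced word $\piu'=(s_{i_2},\dots,s_{i_\ell})$ such that $f_{\piu'}\colon\X_{\piu'}\to X_{w'}$ is a resolution with $\X^o_{\piu'}\cong\O_{w'}$, as produced by Theorem \ref{th:resolution}. Setting $\piu=(s_i,s_{i_2},\dots,s_{i_\ell})$, the hypothesis $s_i\cdot N_{w'}=N_w$ shows that the distinguished torus-fixed point of $\X_\piu$ maps to $N_w$, and the equality $\dim\X_\piu=\dim\X_{\piu'}+1=\dim\O_w$ forces the proper map $f_\piu\colon\X_\piu\to X_w$ to be birational, so $\X^o_\piu\cong\O_w$. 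With $\hb=1$ we then have $c^H(\X^o_\piu\to\NN)=\csm(\O_w\subset\NN)$ and analogously for $w'$, placing us in the exact hypothesis of Theorem \ref{char-indukcja}.

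The algebraic step is to compute $A_i:=A^H_i|_{\hb=1}$ explicitly. Substituting $c^H(L)=1+c_1(L)$, $e^H(L)=c_1(L)$, and $c_1(t_{i+1}t_i^{-1})=t_{i+1}-t_i$, the $x$-coefficient of $A^H_i$ collapses as $\tfrac{1+t_{i+1}-t_i}{t_{i+1}-t_i}-1=\tfrac{1}{t_{i+1}-t_i}$, yielding the first displayed form. The alternative presentations follow from the one-line rewrite
$$\bett_i(x)+s_i(x)=\frac{x}{t_{i+1}-t_i}+\frac{s_i(x)+(t_i-t_{i+1})s_i(x)}{t_i-t_{i+1}}=A_i(x)\,,$$
combined with $\bett_i=-\partial_i$ from equation \eqref{nasze_partial}.

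The only non-routine input is the geometric compatibility of the two resolutions, where I expect the main subtlety: one must verify that a reduced word for a conjugator of $w'$ can always be prepended by $s_i$ to give a reduced word for a (not necessarily Lemma-\ref{sequence_lemma}) conjugator of $w$. This reduces to a length count in $\Sn$ given the Remark after Theorem \ref{th:resolution}; everything else is bookkeeping of Chern and Euler classes of one-dimensional torus weights.
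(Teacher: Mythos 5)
Your argument takes the same route the paper intends for this corollary: it is the direct specialization of Theorem~\ref{char-indukcja} to ordinary equivariant cohomology with $\hb=1$, after choosing compatible words $\piu'$ and $\piu=(s_i)\cup\piu'$. The geometric compatibility you flag is indeed settled by a length count, and in fact more cheaply than you suggest: $\dim\X_\piu=\dim\X_{\piu'}+1=\dim\O_{w'}+1=\dim\O_w$ holds unconditionally, the image of the proper map $f_\piu$ contains $\O_w$ and has dimension at most $\dim\X_\piu=\dim\O_w$, hence equals $X_w$, and then the assertion preceding Theorem~\ref{char-indukcja} (that $\dim\X^o_\piu=\dim\O_w$ forces $\X^o_\piu\xrightarrow{\sim}\O_w$) applies. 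Equivalently, formula~\eqref{dim_length} applied to $w$ and $w'$ forces $\ell(s_i\pi')=\ell(\pi')+1$, so $\piu$ is automatically reduced. The computation of $A_i=A^H_i|_{\hb=1}$ is correct.

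One caveat concerning the alternative presentations: your one-line rewrite establishes $\bett_i(x)+s_i(x)=A_i(x)=-\partial_i(x)+s_i(x)$, which is right, but it does \emph{not} yield the first displayed identity $A_i(x)=\bett_i\bigl(\tfrac{1+t_{i+1}-t_i}{t_{i+1}-t_i}x\bigr)-x$, and you should not claim that all three follow from it. In fact that identity is false as printed: with $a=t_{i+1}-t_i$, expanding gives $\bett_i\bigl(\tfrac{1+a}{a}x\bigr)-x=\tfrac{(1+a-a^2)x+(1-a)s_i(x)}{a^2}$, which equals $A_i(x)=\tfrac{ax-a(1-a)s_i(x)}{a^2}$ only when $a^2=1$. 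This is a typographical slip inherited from the Remark following Theorem~\ref{char-indukcja}; the correct statement there is $A^h_i(x)=\bett^h_i\bigl(c^h(t_{i+1}t_i^{-1})\,x\bigr)-x$ (no denominator $e^h$ inside), whose cohomological specialization is $A_i(x)=\bett_i\bigl((1+t_{i+1}-t_i)x\bigr)-x$, and that one \emph{does} agree with your rewrite. The main recursion for $\csm(\O_w\subset\NN)$ is unaffected.
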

For the motivic Chern classes in K-theory
\begin{cor}\label{mc-indukcja} Suppose that $\dim (\O_{w})=\dim (\O_{w'})+1$ and $s_i\cdot N_{w'}=N_w$. Then
$$\frac{\mC(\O_{w}\subset\NN)}{e^K(\NN)}=A_i^K\left(\frac{\mC(\O_{w'}\subset\NN)}{e^K(\NN)}\right)\,,$$
where
$$A^K_i(x)=
\frac {(1+y)t_i/t_{i+1}}{1-t_i/t_{i+1}}x+ \frac {1+y\,t_{i+1}/t_i}{1-t_{i+1}/t_i} s_i(x)\,.$$
Alternatively
$$A^K_i(x)=
\bett^K_i
\left(\frac {1+y\,t_{i}/t_{i+1}}{1-t_{i}/t_{i+1}} x\right)-x\,.$$
\end{cor}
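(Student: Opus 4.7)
The plan is to deduce Corollary \ref{mc-indukcja} as a specialization of Theorem \ref{char-indukcja} to K-theory, followed by the substitution $\hb = -y^{-1}$ that, as flagged at the opening of Section \ref{sec:indukcja2}, converts $c^K(TM)$ into $\lambda_y(T^*M)$, the generator used to define motivic Chern classes.

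First I would align the geometric input with the hypothesis of the Corollary. Assume $s_i \cdot N_{w'} = N_w$ and $\dim \O_w = \dim \O_{w'} + 1$. Pick a reduced word $\piu'$ for the permutation $\pi_{w'}$ of Lemma \ref{sequence_lemma}; then by formula \eqref{dim_length}, prepending $s_i$ produces a reduced word $\piu = (s_i, \piu')$ for $s_i \pi_{w'}$, satisfying the hypotheses of Theorem \ref{th:resolution}. The argument used in the proof of that theorem (connectedness of the $B$-stabilizer of $N_w$) shows that $f_\piu$ restricted to the open $B$-orbit $\X^o_\piu$ is a $B$-equivariant isomorphism onto the smooth orbit $\O_w$. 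Consequently $c^K(\X^o_\piu \to \NN)$ specializes to $\mC(\O_w \subset \NN)$ under $\hb = -y^{-1}$, and analogously for $\piu'$ and $\O_{w'}$, so the two sides of Corollary \ref{mc-indukcja} are exactly the two sides of Theorem \ref{char-indukcja} after the substitution.

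Next I would simply evaluate the operator $A^h_i$ of Theorem \ref{char-indukcja} at $h = K$ with $\hb = -y^{-1}$. Using $e^K(t_{i+1} t_i^{-1}) = 1 - t_i/t_{i+1}$ and
$$c^K(t_{i+1} t_i^{-1}) = 1 - (\hb\, t_{i+1}/t_i)^{-1} = 1 + y\, t_i/t_{i+1},$$
the combination $(c^K/e^K) - 1$ evaluates to
$$\frac{(1 + y\, t_i/t_{i+1}) - (1 - t_i/t_{i+1})}{1 - t_i/t_{i+1}} = \frac{(1+y)\, t_i/t_{i+1}}{1 - t_i/t_{i+1}},$$
which is the coefficient of $x$ in the stated formula. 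The coefficient $\frac{1 + y\, t_{i+1}/t_i}{1 - t_{i+1}/t_i}$ of $s_i(x)$ is the $s_i$-conjugate expression and drops out directly. The alternative presentation with $\bett^K_i$ follows by the same formal manipulation as in the Remark after Theorem \ref{char-indukcja}, using the defining formula for $\bett^K_i$ from Theorem \ref{th:fund-indukcja} together with the identity $s_i\bigl(\tfrac{1 + y\, t_i/t_{i+1}}{1 - t_i/t_{i+1}}\bigr) = \tfrac{1 + y\, t_{i+1}/t_i}{1 - t_{i+1}/t_i}$.

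The main conceptual obstacle is the identification of $c^K(\X^o_\piu \to \NN)$ with $\mC(\O_w \subset \NN)$ in the second paragraph: beyond the $B$-equivariant isomorphism $\X^o_\piu \simeq \O_w$, one must verify that the substitution $\hb = -y^{-1}$ transports the product $\prod_j(1 - \hb^{-1} L_j^{-1})$ defining $c^K(TM)$ into the motivic normalization $\prod_j(1 + y\,L_j^{-1}) = \lambda_y(T^*M)$ of \cite{BSY, FRW}. Once this bookkeeping is in place the rest of the proof is a routine specialization of Theorem \ref{char-indukcja}.
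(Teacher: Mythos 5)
Your proposal is essentially the proof the paper has in mind: Corollary \ref{mc-indukcja} is nothing but Theorem \ref{char-indukcja} specialized to $h=K$ with the substitution $\hb=-y^{-1}$, and your derivation of the two coefficients $\frac{(1+y)t_i/t_{i+1}}{1-t_i/t_{i+1}}$ and $\frac{1+y\,t_{i+1}/t_i}{1-t_{i+1}/t_i}$ from $c^K$, $e^K$ is correct. The geometric bookkeeping (reduced word prepending, isomorphism $\X^o_\piu\simeq\O_w$, identification of $c^K(\X^o_\piu\to\NN)$ with $\mC(\O_w\subset\NN)$) is the right argument and matches the paper's setup.

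However, the step you dismiss as ``the same formal manipulation as in the Remark'' hides a genuine issue, because the Remark's identity, as literally written, is off by an Euler-class factor; since you cite it without checking, you inherit the slip. Concretely, with $\bett^K_i(z)=\frac{z}{1-t_i/t_{i+1}}+\frac{s_i(z)}{1-t_{i+1}/t_i}$ one computes
$$\bett^K_i\Bigl(\tfrac{1+y\,t_i/t_{i+1}}{1-t_i/t_{i+1}}\,x\Bigr)-x
=\Bigl(\tfrac{1+y\,t_i/t_{i+1}}{(1-t_i/t_{i+1})^2}-1\Bigr)x+\tfrac{1+y\,t_{i+1}/t_i}{(1-t_{i+1}/t_i)^2}\,s_i(x)\,,$$
whose coefficients carry the squares of the Euler classes in the denominator and do \emph{not} equal $A^K_i(x)$. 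The correct identity is
$$A^K_i(x)=\bett^K_i\bigl(c^K(t_{i+1}t_i^{-1})\,x\bigr)-x=\bett^K_i\bigl((1+y\,t_i/t_{i+1})\,x\bigr)-x\,,$$
i.e.\ the factor placed inside $\bett^K_i$ should be $c^K$ alone, not $c^K/e^K$. (The same remark applies to the cohomological alternative $A_i(x)=\bett_i\bigl((1+t_{i+1}-t_i)\,x\bigr)-x$ in Corollary \ref{csm-indukcja}.) A blind proof ought to verify the display rather than appeal to ``the same formal manipulation''; once you expand $\bett^K_i$ and use $s_i\bigl(1+y\,t_i/t_{i+1}\bigr)=1+y\,t_{i+1}/t_i$, the corrected identity drops out in one line, while the version with the extra $1/e^K$ does not close.
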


The operator $A_i$ has  appeared in \cite[\S5]{AMSS0} (denoted by $\bf L^\vee$). The K-theoretic version is present in \cite{AMSS} (denoted by $\mathcal T^\vee$). In both cases these operators are dual to those computing the classes of the open orbits.

Similarly as in Proposition \ref{fund-start} we find the beginning of the induction.

\begin{prop}The class of the minimal orbit of rank $m$ divided by the Euler class is equal
$$\frac{[\O_{w_m}]_h}{e^h(\NN)}=\prod_{ n-m+i< j\leq n} \frac{c^h(u\,t_it_j^{-1})}{e^h(u\,t_it_j^{-1})}\cdot \prod_{ n-m+i= j\leq n} \left(\frac{c^h(u\,t_it_j^{-1})}{e^h(u\,t_it_j^{-1})}-1\right)\,.$$
In particular
$$\frac{\csm(\O_{w_m}\subset\NN)}{e(\NN)}=\prod_{ n-m+i<j\leq n} \frac{1+u+t_i-t_j}{u+t_i-t_j}\cdot \prod_{ n-m+i= j\leq n} \frac1{u+t_i-t_j}\,.$$
and
$$\frac{\mC(\O_{w_m}\subset\NN)}{e^K(\NN)}=\prod_{ n-m+i< j\leq n} \frac{1+y\,t_j/(u\, t_i)}{1-t_j/(u\, t_i)}\cdot\prod_{ n-m+i= j\leq n} \frac{(1+y)\,t_j/(u\, t_i)}{1-t_j/(u\, t_i)}\,.$$
\end{prop}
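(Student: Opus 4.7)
The plan is to exploit the fact that $X_{w_m}$ is a coordinate subspace of $\NN$ on which the orbit $\O_{w_m}$ is cut out by the non-vanishing of $m$ specific coordinates, so that $\O_{w_m}$ decomposes as an external product of one-dimensional pieces inside $\NN$ and the claimed formulas become products of one-dimensional characteristic classes.

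First I would verify that $X_{w_m}$ is the coordinate subspace supported on the positions $\{(i,j):1\leq i\leq m,\ n-m+i\leq j\leq n\}$. A direct expansion of $(BN_{w_m}B^{-1})_{ij}=\sum_l(BN_{w_m})_{il}(B^{-1})_{lj}$, combined with the upper-triangularity of $B$ and $B^{-1}$, forces the entry to vanish unless $i\leq m$ and $j\geq n-m+i$, while for those positions the entries of $B$ attain every value. The dimension count from Lemma \ref{lem:orbit_dim} identifies this with the full closure $X_{w_m}$.

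Second I would show that $\O_{w_m}$ is exactly the open subset of $X_{w_m}$ on which all anti-diagonal entries $a_{k,n-m+k}$ are nonzero. One direction is immediate from the explicit formula $(BN_{w_m}B^{-1})_{k,n-m+k}=b_{kk}/b_{n-m+k,n-m+k}$. The other direction uses that such a matrix has rank exactly $m$ and can be conjugated into $N_{w_m}$ by first normalizing the anti-diagonal with the diagonal torus of $B_n$ and then killing the off-anti entries by the unipotent parts of the upper-left and lower-right $m\times m$ blocks of $B_n$.

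With these identifications $\O_{w_m}$ factors, inside $\NN=\prod_{(i,j)}\C_{(i,j)}$, as an external product: $\{0\}\subset\C_{(i,j)}$ at each position outside $X_{w_m}$, $\C^*\subset\C_{(i,j)}$ at each anti-diagonal position, and $\C\subset\C_{(i,j)}$ at each off-anti position. The characteristic class $c^h$ is multiplicative under external products of maps (standard, via the Whitney formula and the projection formula), so the global class becomes a product of one-dimensional factors. For a line $\C_\omega$ of weight $\omega$ the three local classes are $c^h(\{0\}\to\C_\omega)=e^h(\omega)$ (Thom class), $c^h(\C_\omega\to\C_\omega)=c^h(\omega)$ (Chern class of the tangent line), and $c^h(\C_\omega^*\to\C_\omega)=c^h(\omega)-e^h(\omega)$ (by the additivity valid for $\csm$ and $\mC$). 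Dividing by $e^h(\NN)=\prod_{(i,j)}e^h(u t_it_j^{-1})$, the outside-$X_{w_m}$ factors cancel, the anti-diagonal factors contribute $c^h/e^h-1$, and the off-anti factors contribute $c^h/e^h$; this is the stated general formula, and the $\csm$ and $\mC$ specializations follow by substituting the corresponding line-bundle expressions.

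The main obstacle I expect is the second step: cleanly proving equality of $\O_{w_m}$ with the non-vanishing locus rather than just an inclusion. The ``$\subseteq$'' direction is free, but the reverse direction requires the explicit conjugation construction sketched above or an appeal to the rank-based classification of square-zero $B_n$-orbits inside $X_{w_m}$ afforded by Melnikov's parametrization.
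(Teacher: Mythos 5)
Your proof is correct and follows essentially the same route the paper intends: the paper treats this proposition as a direct analogue of Proposition \ref{fund-start}, whose one-line justification is that $X_{w_m}$ is a coordinate subspace and the class is a product of the corresponding weights, and you simply fill in the unstated steps (identifying $\O_{w_m}$ as the product of $\C$'s and $\C^*$'s inside the corner, and invoking multiplicativity of $c^h$ over external products). The only point the paper leaves implicit that genuinely needs an argument is your second step (that the nonvanishing locus of the anti-diagonal entries is exactly $\O_{w_m}$ and not larger), and your torus-plus-unipotent conjugation sketch handles this correctly.
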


\subsection{Example of computations}
\begin{ex} Suppose $n=4$, $w=\trans 12\trans 34$
$$
\xymatrix@-1pc{
_1\ar@{-}[r]
\ar@{-}@/^0.7pc/[r]
&
_2\ar@{-}[r]
&
_3\ar@{-}[r]\ar@{-}@/^0.7pc/[r]&
_4
}\qquad
X_w=\left\{
\hbox{\footnotesize$\begin{pmatrix}0\ a\ b\ c\\[-0.1cm]0\ 0\ 0\ d\\[-0.1cm]0\ 0\ 0\ e\\[-0.1cm]0\ 0\ 0\ 0\end{pmatrix}$}
~~|~~ ad+be=0\right\}.
$$
Since $N_w=s_2(N_{w_2})$ we have in cohomology
\begin{align*}[X_w]&=-e(\NN)\,\partial_2\left(\frac1{(t_1-t_3+u)(t_1-t_4+u)(t_2-t_4+u)}\right)\\
&=e(\NN)\frac{t_1-t_4+2u}{(t_1-t_2+u)(t_1-t_3+u)(t_1-t_4+u)(t_2-t_4+u)(t_3-t_4+u)}\,.
\end{align*}
\begin{align*}
\csm(\O_w\subset \NN)&=e(\NN)\,A_2\left(\frac{1+t_1-t_4+u}{(t_1-t_3+u)(t_1-t_4+u)(t_2-t_4+u)}\right)\\
&=e(\NN)\frac
{(1+t_1 - t_4 + u)(t_1 - t_4 + 2u+(t_1-t_3+u)(t_2-t_4+u))}
{(t_1-t_2+u)(t_1-t_3+u)(t_1-t_4+u)(t_2-t_4+u)(t_3-t_4+u)}
\,.\end{align*}
In K-theory we have
\begin{align*}[X_w]_K&=e^K(\NN)\,\delta_2\left(\frac1
{(1-\tfrac{t_3}{ut_1})(1-\tfrac{t_4}{ut_1})(1-\tfrac{t_4}{ut_2})}\right)\\
&=e^K(\NN)\frac{1 - \tfrac{t_4}{u^2 t_1}
}{(1-\tfrac{t_2}{ut_1})(1-\tfrac{t_3}{ut_1})(1-\tfrac{t_4}{ut_1})(1-\tfrac{t_4}{ut_2})(1-\tfrac{t_4}{ut_3})}\,.
\end{align*}
\begin{multline*}
\mC(\O_w\subset\NN)=e^K(\NN)\,A_2^K\left(\frac{(1+y)^2\tfrac{t_3}{ut_1}\tfrac{t_4}{ut_2}(1+y\tfrac{t_4}{ut_1})}{(1-\tfrac{t_3}{ut_1})(1-\tfrac{t_4}{ut_1})(1-\tfrac{t_4}{ut_2})}\right)\\
=e^K(\NN)\frac
{ (1+y)^2
\frac{t_4}{u^2\,t_1} \left(1+y\frac{{t_4}}{u\,{t_1} }\right)
   \left(1-\frac{{t_2}}{u\,{t_1} }+\frac{{t_2}}{{t_3}}-\frac{{t_4}}{u\,{t_3}
   }+y
   \big(1-\frac{{t_4}}{ u^2\,{t_1}
  }\big)\right) 
}
{(1-\tfrac{t_2}{ut_1})(1-\tfrac{t_3}{ut_1})(1-\tfrac{t_4}{ut_1})(1-\tfrac{t_4}{ut_2})(1-\tfrac{t_4}{ut_3})}
\,.\end{multline*}
\end{ex}
\begin{ex} We come back to the Example \ref{ex1634}: let $w=\trans 1 6\trans 3 4\in \Inv_7$, $\piu=s_2s_6s_4s_5s_6$.
Then
$$\begin{matrix}
{[X_w]}&=&\hfill(-1)^5{e(\NN)}\cdot\partial_2\partial_6\partial_4\partial_5\partial_6&\hskip-2ex\big(\frac1{(t_1-t_6)(t_1-t_7)(t_2-t_7)}\big)\hfill&\in&H^{16}_\T(\NN)\,,\\
{\csm(X_w\subset \NN)}&=&\hfill {e(\NN)}\cdot A^H_2A^H_6A^H_4A^H_5A^H_6&\hskip-2.5ex\big(\frac{1+t_1-t_7}{(t_1-t_6)(t_1-t_7)(t_2-t_7)}\big)\hfill&\in&H^*_\T(\NN)\,,\\
{[X_w\subset \NN]_K}&=&\hfill{e^K(\NN)}\cdot\beta^K_2\beta^K_6\beta^K_4\beta^K_5\beta^K_6&\hskip-2.5ex \big(\frac1{(1-t_7/t_1)(1-t_6/t_1)(1-t_7/t_2)}\big)\hfill&\in&K_\T(\NN)\,,\\
{\mC(X_w\subset \NN)}&=&\hfill{e^K(\NN)}\cdot A^K_2A^K_6A^K_4A^K_5A^K_6&\hskip-2.5ex\big(\frac{(1+y\,t_7/t_1)((1+y)t_6/t_1)((1+y)+t_7/t_2)}{(1-t_6/t_1)(1-t_7/t_1)(1-t_7/t_2)}\big)\hfill&\in&K_\T(\NN)[y]\,.
\end{matrix}$$
The results are too long to present them in print, although they are easily obtained by computer.
\end{ex}

\section{Relation with classical Schubert calculus}
\def\b{\bullet}
\def\c{\circ}
\def\trzykwd#1#2#3#4#5#6#7#8#9{\Big[\begin{matrix}{#1}\sk {#2}\sk {#3}\\[-0.25cm]{#4}\sk {#5}\sk {#6}\\[-0.25cm]{#7}\sk {#8}\sk {#9}\end{matrix}\Big]}

Amazingly from the  case of square-zero matrices one can deduce formulas for the classes of Schubert varieties in the classical flag variety $\GL_n/\B_n$.
We explain that relation below.

\subsection{From nilpotent orbits to Schubert cells}
We consider equivariant cohomology, but a parallel discussion applies to K-theory.  The construction presented in this section is closely related to \cite{KnuMiShi, Knutson} where an interpretation of the Schubert polynomials is given.
We have the Kirwan surjective map:
\begin{equation}\label{Kirwan}\kappa:H^*_{\T_n\times \T_n}(\Hom(\C^n,\C^n))\longrightarrow H^*_{\T_n}(\GL_n/\B_n)\,.\end{equation}
Classes of the Schubert varieties are images of the classes of $\B_n\times \B_n$ orbits in $\Hom(\C^n,\C^n)$.
Furthermore, let us consider the embedding $\Hom(\C^n,\C^n)\hookrightarrow \Hom(\C^{2n},\C^{2n})$ as the block matrices {\footnotesize $\begin{pmatrix}0\ A\\0\,\ 0\end{pmatrix}$}. This map is $\B_{2n}$-equivariant, given that $\B_{2n}$ acts on $\Hom(\C^{n},\C^{n})$ via the natural surjection $\B_{2n}\twoheadrightarrow \B_n\times\B_n$
{\footnotesize $$\begin{pmatrix}B_1\ C~\\0\,\ B_2\end{pmatrix}\mapsto (B_1,B_2)\,.$$}
The $\B_n\times \B_n$ orbits in $\Hom(\C^n,\C^n)$ are mapped to $\B_{2n}$-orbits in $\Hom(\C^{2n},\C^{2n})$ which are contained in the upper-right corner.
Therefore the case of 2-nilpotent matrices contains all information about Schubert classes. It is remarkable that in addition there are involved ``boundary'' classes corresponding to degenerate matrices $A$ and ``exterior'' classes, which are the classes of orbits not contained in the upper-right corner.
This simple observation can be generalized
\begin{prop}Consider the embedding of $\Hom(\C^k,\C^n)$ to $\Hom(\C^{k+n},\C^{k+n})$ as the upper-right block matrices. Then $\B_k\times\B_n$-orbits in  $\Hom(\C^k,\C^n)$ are exactly the $\B_{k+n}$ orbits contained in the upper-right block.\end{prop}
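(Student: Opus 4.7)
The plan is to reduce the statement to a single block-matrix conjugation, after which both directions fall out from the triangular shape. First I would fix conventions: represent $A\in\Hom(\C^k,\C^n)$ as an $n\times k$ matrix and split $\C^{k+n}=\C^n\oplus\C^k$, so that the upper-right $n\times k$ block of a $(k+n)\times(k+n)$ matrix is precisely the place where $A$ sits. A general element of $\B_{k+n}$ then has the block form
\[
g=\begin{pmatrix} B_2 & C \\ 0 & B_1 \end{pmatrix},\qquad B_2\in\B_n,\ B_1\in\B_k,\ C\in\Hom(\C^k,\C^n),
\]
and a general element in the image of the embedding is $M=\bigl(\begin{smallmatrix} 0 & A \\ 0 & 0\end{smallmatrix}\bigr)$.

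Next I would compute $gMg^{-1}$ directly. Using $g^{-1}=\bigl(\begin{smallmatrix} B_2^{-1} & -B_2^{-1}CB_1^{-1} \\ 0 & B_1^{-1}\end{smallmatrix}\bigr)$, two block multiplications give
\[
gMg^{-1}=\begin{pmatrix} 0 & B_2AB_1^{-1} \\ 0 & 0 \end{pmatrix}.
\]
This single identity already contains the whole proposition. The off-diagonal coordinate $C$ disappears, so the conjugation action of $\B_{k+n}$ preserves the upper-right-block subspace, and the induced action factors through the natural surjection $\B_{k+n}\twoheadrightarrow \B_n\times\B_k$, acting as $A\mapsto B_2AB_1^{-1}$, which is exactly the standard $\B_k\times\B_n$-action on $\Hom(\C^k,\C^n)$.

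Finally I would draw the two set-theoretic consequences. Stability of the upper-right block under conjugation means that any $\B_{k+n}$-orbit meeting this subspace is entirely contained in it, so ``$\B_{k+n}$-orbits contained in the upper-right block'' coincides with ``$\B_{k+n}$-orbits of matrices of the form $\bigl(\begin{smallmatrix} 0 & A \\ 0 & 0\end{smallmatrix}\bigr)$''. Combined with surjectivity of the projection $\B_{k+n}\twoheadrightarrow\B_n\times\B_k$, the two orbit partitions coincide setwise, which is the claim. There is no serious obstacle; the only care needed is to keep the block-size conventions straight so that the $n\times k$ shape of $A$ matches the stated factorization $\B_k\times\B_n$.
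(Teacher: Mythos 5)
Your proof is correct and takes essentially the same route the paper does: the paper omits a formal proof, calling it a generalization of the ``simple observation'' stated just above for $k=n$, namely that the embedding is equivariant because conjugation by a block upper-triangular matrix acts on the upper-right block through the surjection $\B_{k+n}\twoheadrightarrow\B_n\times\B_k$ (with the off-diagonal block $C$ dropping out). Your explicit block computation of $gMg^{-1}$ together with the observation that the upper-right-block subspace is $\B_{k+n}$-invariant (so any orbit meeting it lies in it) and the surjectivity of the projection is precisely the content the paper leaves implicit.
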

The recursive formulas of the previous sections may serve as tools to compute fundamental classes of the orbit closures, as well as characteristic classes of orbits themselves.
\medskip

Considering nilpotent orbits has an advantage, which allows us to look at the variables of the double Schubert polynomial as one set, not divided into equivariant parameters and Chern classes of the line bundles. From the point of view of square-zero matrices there is no such distinction and exchanging variables between these groups results transgressing the boundary of the upper-right block, as explained in the introduction.

\subsection{Schubert and Grothendieck polynomials}
\label{Schub_as_sq0}
We consider the upper-triangular square-zero matrices of the size $2n\times 2n$.
In this section we neglect the additional variable $u$, since the extended polynomials with this variable can be recovered by a suitable substitution, provided that we do not apply the operation $\beta_n=-\partial_n$. \medskip

For the $\B_{2n}$-orbits contained in the upper-right block we recover the class in $\Hom(\C^n,\C^n)$ multiplying $\frac{[X_w]}{e(\NN)}$ by the class of the block $$e(\Hom(\C^n,\C^n))=\prod_{i=1}^n\prod_{j=n+1}^{2n}(t_i-t_j)$$ and then changing variables (to agree later with the Schubert polynomials convention)
\begin{equation}t_{1}=x_n\,,\quad t_{2}=x_{n-1}\,,\quad \dots\quad t_{n}=x_1\,.\label{podstawienia1}\end{equation}
\begin{equation}t_{n+1}=y_1\,,\quad t_{n+2}=y_{2}\,,\quad \dots\quad t_{2n}=y_n\,.\label{podstawienia2}\end{equation}
\medskip

Let us recall the inductive definition of the double Schubert polynomials \cite[Def.~6.1]{AndersonFulton}:
\begin{align*}{\rm(i)}\quad&\Sch_{\pi_n}=\prod_{i+j\leq n} (x_i-y_i)\\
{\rm(ii)}\quad&\Sch_{\pi s_i}=\partial^x_i\Sch_\pi\quad\text{ if }\ell(\pi s_i)>\ell(\pi)\\
\end{align*}
Here $\pi_n$ is the longest permutation and the divided difference $\partial^x_i$ is the standard one
$$\partial^x_i(f)=\frac{f-f_{x_i\leftrightarrow x_{i+1}}}{x_i-x_{i+1}}\,.$$
In addition the double Schubert polynomials satisfy
$${\rm(iii)}\quad\Sch_{s_i\pi }=-\partial^y_i\Sch_\pi\quad\text{ if }\ell( s_i\pi)>\ell(\pi)\,,$$
see \cite[Theorem 1.1]{IkMiNa}.
Suppose $w\in \Inv_{2n}$ is an involution such that $N_w$ is contained in the upper right block and the rank of $N_w$ is maximal. Then
the upper right block is a $n\times n$ permutation matrix. Let us reverse the order of rows. The resulting matrix is the matrix of a permutation $\pi_w^{-1}$.

\begin{ex}Let $n=3$,
$w=(1\,4)(2\,6)(3\,5)\in \Inv_6$.
The resulting permutation is obtained by the following sequence of operations:

\def\bskih{\hskip-2pt}
$$\left[\hskip-1pt\begin{matrix}
\c\bskih \c\bskih \c\bskih \b\bskih \c\bskih \c\phantom{.}\\[-0.25cm]
\c\bskih \c\bskih \c\bskih \c\bskih \c\bskih \b\phantom{.}\\[-0.25cm]
\c\bskih \c\bskih \c\bskih \c\bskih \b\bskih \c\phantom{.}\\[-0.25cm]
\c\bskih \c\bskih \c\bskih \c\bskih \c\bskih \c\phantom{.}\\[-0.25cm]
\c\bskih \c\bskih \c\bskih \c\bskih \c\bskih \c\phantom{.}\\[-0.25cm]
\c\bskih \c\bskih \c\bskih \c\bskih \c\bskih \c\phantom{.}\end{matrix}\hskip-6pt\right] ~~~\mapsto~~~\trzykwd \b\c\c \c\c\b \c\b\c
\rotatebox[origin=c]{270}{$\mathlarger{\mathlarger{\curvearrowright\hskip-2.5ex\curvearrowleft}}$}
~~~\mapsto~~~\trzykwd \c\b\c \c\c\b \b\c\c ~~~\mapsto~~~ \big(\,\pi_w(1)=2,~~\pi_w(2)=3,~~\pi_w(3)=1\,\big)=s_1s_2\,.$$

\end{ex}

\begin{prop}\label{prop:szury}
For $X_w$ contained in the upper-right block
\begin{equation}\label{Schubert_eq}e(\Hom(\C^n,\C^n))\cdot \frac{[X_w]}{e(\NN)}=\Sch_{\pi_w}\end{equation}
after the substitution given by (\ref{podstawienia1}--\ref{podstawienia2}).
\end{prop}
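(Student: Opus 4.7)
The plan is an induction on the length of a reduced word for $\pi_w^{(2n)}\in\mathcal{S}_{2n}$ from Lemma~\ref{sequence_lemma}, starting from $w=w_n$ (the minimal orbit of maximal rank $n$). The key structural observation is that when $N_w$ sits in the upper-right block, the permutation $\pi_w^{(2n)}$ has one-line notation $(1,2,\dots,n,j_1,\dots,j_n)$ and fixes $\{1,\dots,n\}$ pointwise; every reduced expression therefore uses only $s_k$ with $k\in\{n+1,\dots,2n-1\}$. Conjugation by such an $s_k$ affects the upper-right $n\times n$ block only by swapping its columns $k-n,\,k-n+1$, and a short matrix calculation on the row-reversed block shows that this column swap corresponds to $\pi_w=s_{k-n}\pi_{w'}$ in $\mathcal{S}_n$, with $\ell(\pi_{w'})=\ell(\pi_w)+1$ (since $\dim X_w=\dim X_{w'}+1$).

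For the base case $w=w_n$, the upper-right block of $N_{w_n}$ is the identity, whose row-reversal is the anti-diagonal---representing the longest permutation $\pi_n\in\mathcal{S}_n$---so $\pi_{w_n}=\pi_n$. By Proposition~\ref{fund-start} (with $u$ ignored), $[X_{w_n}]/e(\NN)=\prod_{i=1}^n\prod_{j=n+i}^{2n}(t_i-t_j)^{-1}$. Multiplying by $e(\Hom(\C^n,\C^n))=\prod_{i=1}^n\prod_{j=n+1}^{2n}(t_i-t_j)$ cancels all factors with $j\ge n+i$, leaving $\prod_{i=1}^n\prod_{k=1}^{i-1}(t_i-t_{n+k})$. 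Under the substitutions \eqref{podstawienia1}--\eqref{podstawienia2} and the relabeling $i\mapsto n-i+1$, this becomes $\prod_{i+j\leq n}(x_i-y_j)=\Sch_{\pi_n}$.

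For the inductive step, Theorem~\ref{th:fund-indukcja} gives $[X_w]/e(\NN)=\bett_k([X_{w'}]/e(\NN))$, and $\bett_k=-\partial^t_k$ by \eqref{nasze_partial}. Since $k>n$, the substitutions send $t_k\mapsto y_{k-n}$ and $t_{k+1}\mapsto y_{k-n+1}$, so $\partial^t_k$ transports to $\partial^y_{k-n}$. The factor $e(\Hom)=\prod_{i,j}(x_i-y_j)$ is symmetric in the $y$-variables, hence $s^y_{k-n}$-invariant, and therefore commutes past the divided difference. Combined with the inductive hypothesis, this yields
\[
e(\Hom)\cdot\frac{[X_w]}{e(\NN)}=-\partial^y_{k-n}\!\left(e(\Hom)\cdot\frac{[X_{w'}]}{e(\NN)}\right)=-\partial^y_{k-n}\Sch_{\pi_{w'}}.
\]
Invoking the left-handed Schubert recursion $\Sch_\pi=-\partial^y_i\Sch_{s_i\pi}$, valid when $\ell(s_i\pi)>\ell(\pi)$ (cf.\ \cite[Theorem~1.1]{IkMiNa}), with $\pi=\pi_w$ and $i=k-n$ then gives $\Sch_{\pi_w}=-\partial^y_{k-n}\Sch_{\pi_{w'}}$, completing the induction.

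The main point of care is the bookkeeping between two permutations both naturally called ``$\pi_w$''---the $\mathcal{S}_n$-permutation of the proposition statement (read from the row-reversed upper-right block) and the $\mathcal{S}_{2n}$-permutation $\pi_w^{(2n)}$ of Lemma~\ref{sequence_lemma}---together with the orientation check that the substitutions take $\partial^t_k$ to $\partial^y_{k-n}$ with the correct sign. The induction is one-sided (only $\partial^y$ ever appears, never $\partial^x$) precisely because $\pi_w^{(2n)}$ permutes solely $\{n+1,\dots,2n\}$; this matches exactly the $y$-symmetry of $e(\Hom)$ needed to commute the divided difference past $e(\Hom)$.
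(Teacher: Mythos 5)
Your proof is correct and follows essentially the same strategy as the paper's: induct on the length of a reduced word for $\pi_w^{(2n)}$, use the fact that $e(\Hom(\C^n,\C^n))$ is separately symmetric in $\{t_1,\dots,t_n\}$ and $\{t_{n+1},\dots,t_{2n}\}$ to commute it past the operators $\bett_k$ ($k\neq n$), match the resulting recursion with the divided-difference recursion for double Schubert polynomials, and verify the base case $w=w_n\leftrightarrow\pi_n$. The one refinement you add --- observing that $\pi_w^{(2n)}$ fixes $\{1,\dots,n\}$ pointwise so that only $\bett_k$ with $k>n$ (hence only $\partial^y$) ever appear --- is a nice clarification that makes the argument one-sided, but does not change the underlying method.
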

This proposition is not new, see \cite[\S2, Prop. 2]{KnutsonZinn:2014}. We include the proof for completeness.

\begin{proof}First let us note that for a polynomial in $t_1,t_2,\dots,t_{2n}$ the operations $\bett_i$ for $i\neq n$ commute with multiplication by $e(\Hom(\C^n,\C^n))$, since that class is symmetric with respect to the groups of variables $\{t_1,t_2,\dots t_n\}$ and $\{t_{n+1},t_{n+2},\dots t_{2n}\}$ separately. Therefore
$$e(\Hom(\C^n,\C^n))\cdot\bett_{i_1}\bett_{i_2}\dots\bett_{i_\ell}\left(\tfrac{[X_{w_n}]}{e(\NN)}\right)=
\bett_{i_1}\bett_{i_2}\dots\bett_{i_\ell}\left(e(\Hom(\C^n,\C^n))\cdot\tfrac{[X_{w_n}]}{e(\NN)}\right)\,.$$
The induction starts with $w_n$
$$e(\Hom(\C^n,\C^n))\cdot\tfrac{[X_{w_n}]}{e(\NN)}=\prod_{i<j}(t_i-t_j)\,.$$
After the change of variables this product is equal to $\Sch_{\pi_n}$, i.e.~the Schubert polynomial associated to the longest permutation. The operators $\bett_i$ for $i\leq n$ become $\partial^x_{n-i}$ (in addition  $\bett_i$ for $i>n$ become $-\partial^y_{i-n}$).
Therefore both sides of the equation \eqref{Schubert_eq}  satisfy the same recursion.
\end{proof}
Applying exactly the same proof we obtain a parallel result for Grothendieck polynomials, confirming the conclusion of the formula in \cite[\S5.4]{Zinn:2018}.
\begin{prop}\label{prop:groth}
For $X_w$ contained in the upper-right block
\begin{equation}\label{Schubert_eq2}e^K(\Hom(\C^n,\C^n))\cdot \frac{[\O_{X_w}]}{e^K(\NN)}={\frak G}_{\pi_w}\end{equation}
after the substitution given by (\ref{podstawienia1}--\ref{podstawienia2}).
\end{prop}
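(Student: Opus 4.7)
The plan is to transcribe the proof of Proposition \ref{prop:szury} into K-theory, replacing cohomological divided differences by their isobaric K-theoretic counterparts. The Grothendieck polynomials $\mathfrak{G}_\pi$ are characterized by the two analogous recursions: $\mathfrak{G}_{\pi_n}$ equals a product of factors $(1-y_j/x_i)$ over $i+j\le n$, and $\mathfrak{G}_{\pi s_i}=\delta_i^x\mathfrak{G}_\pi$ (respectively $\mathfrak{G}_{s_i\pi}=-\delta_i^y\mathfrak{G}_\pi$) whenever the length increases, where $\delta_i$ is the isobaric Demazure operator. So I only need three ingredients: commutation of $\bett_i^K$ ($i\ne n$) with the Euler class of the block, matching of the base case, and identification of $\bett_i^K$ with $\pm\delta_i^{x}$ or $\pm\delta_i^{y}$ after the substitution \eqref{podstawienia1}--\eqref{podstawienia2}.

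First I would check the commutation. The class $e^K(\Hom(\C^n,\C^n))=\prod_{i\le n<j}(1-t_j/t_i)$ is symmetric separately in $\{t_1,\dots,t_n\}$ and $\{t_{n+1},\dots,t_{2n}\}$. The operator $\bett_i^K$ involves only the transposition $s_i=(i,i+1)$ and the factor $1-t_{i+1}/t_i$, both of which act within a single group when $i\ne n$. Hence $\bett_i^K$ commutes with multiplication by $e^K(\Hom(\C^n,\C^n))$ for all $i\ne n$, so the recursion from Theorem \ref{th:fund-indukcja} may be applied after multiplying through by the Euler class of the block.

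Next I would verify the base case: by Proposition \ref{fund-start} (with $u=1$),
\[
e^K(\Hom(\C^n,\C^n))\cdot\frac{[X_{w_n}]_K}{e^K(\NN)}=\prod_{\substack{i\le n<j\\ i+j\le n+n}}(1-t_j/t_i)=\prod_{i+j\le n+1}\!\!\!(1-t_{n+j}/t_{i})\cdot(\text{cancellation}),
\]
which after the substitution \eqref{podstawienia1}--\eqref{podstawienia2} becomes exactly the product defining $\mathfrak{G}_{\pi_n}$ for the longest permutation. (The index bookkeeping here is where I have to be careful, since \eqref{podstawienia1} reverses the order of $\{t_1,\dots,t_n\}$ before identifying with $\{x_1,\dots,x_n\}$.)

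Finally I would identify the operators. A direct computation shows that $\bett_i^K(f)=\frac{t_{i+1}f-t_is_i(f)}{t_{i+1}-t_i}$ becomes, under the substitution $t_j=x_{n+1-j}$ for $j\le n$, the standard isobaric divided difference $\delta^x_{n-i}$ in the $x$-variables (and similarly on the $y$-side with the expected sign, matching relation (iii) recalled for $\mathfrak{G}$). Inductively applying Theorem \ref{th:fund-indukcja} along the reduced word of $\pi_w$ as in Proposition \ref{prop:szury}, both sides of \eqref{Schubert_eq2} satisfy the same recursion with the same starting value, hence are equal. The main potential obstacle is precisely the sign/orientation bookkeeping in the third step — getting the two conventions for $\delta_i$ and for $\bett_i^K$ aligned under the reversal of variables — but once done the induction runs identically to the cohomological case.
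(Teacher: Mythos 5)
Your proposal is correct and takes the same route as the paper: the paper's own proof of Proposition \ref{prop:groth} consists of the single remark "Applying exactly the same proof we obtain a parallel result for Grothendieck polynomials," and your three steps — commuting $\bett_i^K$ for $i\ne n$ past the (separately symmetric) block Euler class, matching the base case from Proposition \ref{fund-start}, and identifying $\bett_i^K$ with $\partial^{x,K}_{n-i}$ under the reversal of variables — are precisely the transcription of the proof of Proposition \ref{prop:szury} into K-theory that the paper intends. Your caution about the sign/orientation bookkeeping in the substitution is warranted but resolves exactly as you expect; note also that since $\pi_w$ for $N_w$ in the upper-right block preserves $\{1,\dots,n\}$ and $\{n+1,\dots,2n\}$ separately, its reduced word never uses $s_n$, so the commutation step genuinely covers every operator that arises.
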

Here the Grothendieck polynomial of the longest permutation is equal to
$$\mathfrak G_{\pi_n}=\prod_{i+j=n}\big(1-\frac{y_j}{x_i}\big)\,.$$
To compute Grothendieck polynomials of smaller length permutation we apply he isobaric divided differences are taken with respect to the variables $x_i$
$$(\partial^{x,K}_i f)(x;y)=\frac {f(\dots,x_i,x_{i+1}\dots;y)}{1-x_{i+1}/x_i}+\frac {f(\dots,x_{i+1},x_{i}\dots;y)}{1-x_i/x_{i+1}}\,,$$
$$\mathfrak G_{\pi s_i}=\partial^{x,K}_i\mathfrak G_{\pi}$$
according to the convention of \cite[\S2.1]{RimanyiSzenes}.
\medskip

Let us come back to the Schubert polynomials.
The equivariant variables in the classical convention are $y_i$'s:
$H^*_{\T_n}(pt)=\Z[y_1,y_2,\dots,y_n]$. The Kirwan map \eqref{Kirwan} is multiplicative but the formulas known from intersection theory of flag varieties do not hold in $H^*_{\T_n\times \T_n}(\Hom(\C^n,\C^n))$.
The boundary terms contribute to the formulas.

\begin{ex} Consider the case $n=2$. Then $\GL_2/\B_2\simeq\PP^1$ and the cohomology ring is isomorphic to
$$\Z[x_1,x_2,y_1,y_2]/(x_1+x_2-y_1-y_2,~~x_1x_2-y_1y_2)\,.$$ The Kirwan map is the quotient map from $\Z[x_1,x_2,y_1,y_2]$.
The class of the zero dimensional cell, the point $p_0$, is equal to $x_1-y_1$.
We have $$[p_0]^2=(y_2-y_1)[p_0]\in H^4_{\T_2}(\GL_2/\B_2)$$
On the level of $\Hom(\C^2,\C^2)$ we have
$[X_{w_2}]=(t_2-t_3)=(x_1-y_1)$ and
\begin{equation}\label{mnoz_w_P1}[X_{w_2}]^2=(x_1-y_1)^2=(y_2-y_1)(x_1-y_1)+(x_1-y_2)(x_1-y_1)=(y_2-y_1)[X_{w_2}]+[X_{(13)}]\end{equation}
since
$$[X_{(13)}]=(t_2-t_3)(t_2-t_4)=(x_1-y_1)(x_1-y_2)\,.$$
Graphically we present the relation \eqref{mnoz_w_P1} as
$$\big[\begin{matrix}\b\sk \c\\[-0.25cm]\c\sk \b\end{matrix}\big]^2=
(y_2-y_1)\big[\begin{matrix}\b\sk \c\\[-0.25cm]\c\sk \b\end{matrix}\big]+\big[\begin{matrix}\b\sk \c\\[-0.25cm]\c\sk \c\end{matrix}\big]\,.
$$
The Kirwan map sends $[X_{w_2}]$ to $[p_0]$ and the boundary class $[X_{(13)}]$ to zero.
\end{ex}

\begin{ex}Let $n=3$.
The corresponding $\B_n$-orbits in $\Hom(\C^3,\C^3)$ have the following classes
$$\begin{matrix}\trzykwd \b\c\c \c\b\c \c\c\b=
(t_3- t_4)  (t_3 - t_5)  (t_2 - t_4),
\hfill&
\trzykwd \b\c\c \c\c\b \c\b\c=
%
(t_3 - t_4) (t_2 - t_4),
\hfill&
\trzykwd \c\b\c \b\c\c \c\c\b=
%
(t_3 - t_5)  (t_3 - t_4),
\hfill\\
\trzykwd \c\b\c \c\c\b \b\c\c=
%
(t_2 + t_3 - t_4 - t_5),
\hfill&
\trzykwd \c\c\b \b\c\c \c\b\c=
t_3 - t_4,
\hfill&
\trzykwd \c\c\b \c\b\c \b\c\c=
%
1\,.\hfill\end{matrix}
$$
After the substitution we obtain the double Schubert polynomials
%
%
$$\begin{matrix}\trzykwd \b\c\c \c\b\c \c\c\b=
(x_1 - y_1)(x_1 - y_2)(x_2 - y_1) \,,
%
\hfill&\trzykwd \b\c\c \c\c\b \c\b\c=
(x_1 - y_1)  (x_2 - y_1)\,,

%
\hfill&\trzykwd \c\b\c \b\c\c \c\c\b=
(x_1 - y_1)  (x_1 - y_1)\,,
%
%
\hfill\\ \trzykwd \c\b\c \c\c\b \b\c\c=
(x_1 + x_2 - y_1 - y_2)\,,
%
%
\hfill&\trzykwd \c\c\b \b\c\c \c\b\c=
x_1 - y_1\,,
%
%
\hfill&\trzykwd \c\c\b \c\b\c \b\c\c=
1\,.
\hfill\end{matrix}
$$
$$\begin{matrix}\trzykwd \b\c\c \c\b\c \c\c\b=\Sch_{321}\,,
%
\hfill&\trzykwd \b\c\c \c\c\b \c\b\c=\Sch_{231}\,,

%
\hfill&\trzykwd \c\b\c \b\c\c \c\c\b=\Sch_{312}\,,
%
%
\hfill& \trzykwd \c\b\c \c\c\b \b\c\c=\Sch_{132}\,,
%
%
\hfill&\trzykwd \c\c\b \b\c\c \c\b\c=\Sch_{213}\,,
%
%
\hfill&\trzykwd \c\c\b \c\b\c \b\c\c=\Sch_{123}\,.
%
\hfill\end{matrix}
$$
We do not list all boundary classes except the following example
$$
\trzykwd \c\b\c \b\c\c \c\c\c
=\bett_4\bett_5\bett_4[X_{w_2}]=\bett_5\bett_4\bett_1[X_{w_2}]=(x_1 -y_1)  (x_1 - y_2)  (x_1 - y_3)\,.$$
The multiplication formulas differ from those for Schubert classes.
We have an equality of polynomials
$$\trzykwd \c\c\b \b\c\c \c\b\c \cdot
\trzykwd \b\c\c \c\c\b \c\b\c =
(y_3-y_1)\trzykwd \b\c\c \c\c\b \c\b\c+
\trzykwd \c\b\c \b\c\c \c\c\c\,.
$$
The class $\trzykwd \c\b\c \b\c\c \c\c\c$ restricts to zero in the cohomology of flag varieties since it corresponds to the orbit consisting of degenerate matrices.
We will not expand the subject of multiplication in the present paper.

\end{ex}

\subsection{Porteous formula} We return to the case of upper-triangular matrices of the size $n\times n$.
The following computation demonstrates a relation between various Schubert classes on the example of rank one matrices. Similar relations hold for higher ranks.\medskip

Suppose $w=\trans i j$. The variety $X_{\trans i j}$ is contained in the upper-right block of the size $i\times (n-j+1)$.
Let $\NN_{i,j}\subset \NN$ be the vector space of the matrices having zeros outside that block. By Proposition \ref{prop:rank_conditions_trans} the variety $X_{\trans i j}\subset \NN_{i,j}$ is identified with the set of matrices of the rank $\leq 1$.
The fundamental class in $\NN_{i,j}$ is described by the Porteous formula, see \cite[Theorem 14.4]{Fulton} or
\cite[12.4]{3264_and_all_that}
\begin{equation} \label{eq:schur_equality}
  [X_{\trans i j}]^\T_{\NN_{i,j}}=\Delta\hbox{$i-1\atop n-j$}(c_\bullet^{[i,j]})=\det\left(\left\{c_{i-1+r-s}^{[i,j]}\right\}_{1\leq r,s\leq n-j}\right)
\,.
\end{equation}
where \(c_\bullet^{[i,j]}\)
is given by \begin{equation}\label{chern_sequence}
 c_\bullet^{[i,j]}=\frac{\prod_{r=1}^i(1+t_r)}{\prod_{s=j}^n(1+t_s)}=1+c_1^{[i,j]}+c_2^{[i,j]}+\dots\,.
\end{equation}

\begin{ex}
  Consider \(n=8\) and \(w=\trans{4}{6} \in \Inv_8\). Then by the
  considerations  above we obtain
  \[[X_w]^\T_{\NN_{4,6}}=
\Delta\hbox{$4-1\atop 8-6 $}(c^{[4,6]}_\bullet)
=\det \begin{pmatrix} c_3 & c_4 \\
      c_2 & c_3 \end{pmatrix}=c_3^2-c_2c_4\,.\]
\end{ex}
To get rid of the dependence on the ambient space we divide by the Euler class
 \begin{equation}\label{eq:class}
\frac{[X_{\trans i j}]^\T}{e(\NN)}
=
\frac{\Delta\hbox{$i-1\atop n-j$}(c_\bullet^{[i,j]})}{e(\NN_{i,j})}\,,
\end{equation}
where the Euler class of $\NN_{i,j}$ is given
by the formula
$$e(\NN_{i,j})=\prod_{r=1}^i\prod_{s=j}^n(t_r-t_s)\,.$$
By Theorem (\ref{th:fund-indukcja}) applying $\bett_i$ to the class \eqref{eq:class} we obtain a relation
\begin{prop} Suppose $i+1<j$. Then
  \begin{equation*}
      \frac{\Delta^i_{n-j}
(c_\bullet^{[i+1,j]})}{e(\NN_{i+1,j})} = \bett_i \left( \frac{\Delta\hbox{$i-1\atop n-j$}(c_\bullet^{[i,j]})}{e(\NN_{i,j})} \right)
  \end{equation*}
\end{prop}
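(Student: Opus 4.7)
The plan is to deduce this proposition directly from the inductive formula for fundamental classes established in Theorem~\ref{th:fund-indukcja} (or equivalently the boxed theorem combining it with Theorem~\ref{char-indukcja}), applied to the pair of rank-one orbits $\O_{\trans{i}{j}}$ and $\O_{\trans{i+1}{j}}$. All the geometric content lives in a short bookkeeping step; the rest is the Porteous identification already recorded in \eqref{eq:class}.

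First I would reinterpret both sides of the claimed identity via the Porteous formula \eqref{eq:class}. The right-hand side (inside the argument of $\bett_i$) is
\[
\frac{\Delta\hbox{$i-1\atop n-j$}(c_\bullet^{[i,j]})}{e(\NN_{i,j})}=\frac{[X_{\trans{i}{j}}]^\T}{e(\NN)},
\]
and the left-hand side is $[X_{\trans{i+1}{j}}]^\T/e(\NN)$, again by~\eqref{eq:class} applied with $i$ replaced by $i+1$. So the proposition reduces to the single equation
\[
\frac{[X_{\trans{i+1}{j}}]^\T}{e(\NN)}=\bett_i\!\left(\frac{[X_{\trans{i}{j}}]^\T}{e(\NN)}\right).
\]

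Next I would verify that $\trans{i+1}{j}$ and $\trans{i}{j}$ are related by the left action of the simple reflection $s_i$, precisely as required by the boxed theorem. Conjugation by the permutation matrix $M_{s_i}$ swaps rows $i$ and $i+1$ and simultaneously swaps columns $i$ and $i+1$; applied to $N_{\trans{i}{j}}$ (whose only nonzero entry is a $1$ in position $(i,j)$) it moves that entry to position $(i+1,j)$, and because we assume $i+1<j$ the column swap leaves the entry in place. Hence $s_i\cdot N_{\trans{i}{j}}=N_{\trans{i+1}{j}}$ and $\O_{\trans{i+1}{j}}=Bs_i\O_{\trans{i}{j}}$. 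The dimension condition is immediate from $\dim\O_{\trans{k}{\ell}}=n+k-\ell$ (as in the proof of Proposition~\ref{prop:rank_conditions_trans}), giving $\dim\O_{\trans{i+1}{j}}=\dim\O_{\trans{i}{j}}+1$.

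With these two hypotheses in place, the boxed theorem (i.e.\ the fundamental-class half of Theorem~\ref{th:fund-indukcja}) gives exactly the displayed identity on fundamental classes, which is what was required. I do not expect any serious obstacle: the only point that needs any care is the bookkeeping verification that $s_i$-conjugation preserves upper-triangularity here, which is exactly where the hypothesis $i+1<j$ enters and prevents the produced orbit from falling outside $\NN$. Once that is noted the proposition is a direct translation of Theorem~\ref{th:fund-indukcja} through the Porteous formula.
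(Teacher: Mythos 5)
Your proof is correct and matches the paper's own (very terse) justification, which simply says the relation follows by applying Theorem~\ref{th:fund-indukcja} to the class in~\eqref{eq:class}. You supply the small bookkeeping steps the paper omits: the Porteous identification of both sides with $[X_{\trans{i}{j}}]^\T/e(\NN)$ and $[X_{\trans{i+1}{j}}]^\T/e(\NN)$, the verification that conjugation by $M_{s_i}$ sends $N_{\trans{i}{j}}$ to $N_{\trans{i+1}{j}}$ (and that $i+1<j$ keeps the result strictly upper-triangular), and the dimension count $\dim\O_{\trans{i+1}{j}}=\dim\O_{\trans{i}{j}}+1$.
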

An analogous relation we obtain when we switch the roles of rows and columns.
The above procedure may serve to compute $\Delta^{i-1}_{n-j}(c^{[i,j]}_\bullet)$ using various path
recursion, not having to compute virtual Chern classes.  For example, if \(n=8\) and
\(w=\trans{4}{5}\), then the following two walks from the north-east corner to (4,5)

\begin{center}
\begin{tikzpicture}
  \draw[step=0.4cm, color=gray] (0, 0) grid (1.6,1.6);
  \draw[line width=2pt, purple] (0.2,0.2) -- (1,0.2)  -- (1,1) -- (1.4,1) -- (1.4,1.4);
  \draw[line width=2pt, blue] (0.2,0.2) -- (0.2,0.6) -- (0.6,0.6) -- (0.6,1.4)  -- (1.4,1.4);
\draw(-0.5,1.4) -- (-0.5,1.4) node [right,text centered,midway]{$1$};
\draw(-0.5,1) -- (-0.5,1) node [right,text centered,midway]{$2$};
\draw(-0.5,0.6) -- (-0.5,0.6) node [right,text centered,midway]{$3$};
\draw(-0.5,0.2) -- (-0.5,0.2) node [right,text centered,midway]{$4$};
\draw(0,1.8) -- (0,1.8) node [right,text centered,midway]{$5$};
\draw(0.4,1.8) -- (0.4,1.8) node [right,text centered,midway]{$6$};
\draw(0.8,1.8) -- (0.8,1.8) node [right,text centered,midway]{$7$};
\draw(1.2,1.8) -- (1.2,1.8) node [right,text centered,midway]{$8$};
\end{tikzpicture}
\end{center}
correspond to reduced word expressions {\color{blue}\(\bett_3\bett_5\bett_2\bett_1\bett_6\bett_7\)} and
{\color{purple}\(\bett_5\bett_6\bett_3\bett_2\bett_7\bett_1\)}.

\section{Relation with RTV weight function}

In a series of papers, see e.g. \cite{RTV0,RTV} Rim{\'a}nyi, Tarasov and Varchenko have studied certain rational functions, called \emph{trigonometric weight functions}. Their homological analogue were described  in \cite{RV}. The weight functions are the limits of elliptic weight functions which are rational combinations of Jacobi theta functions. The trigonometric weight functions define elements in K-theory of flag varieties, which satisfy axioms of the K-theoretic stable envelopes in the sense of Okounkov, \cite{Oko}. The trigonometric functions depend on a choice of the Schubert cell and the slope. For a preferred choice of the slope the trigonometric weight function  defines (by certain substitutions of parameters) an element in the K-theory of the flag variety. This element is equal to the motivic Chern class of the Schubert cell. This was proven in \cite{FRW} by verification of the axioms of stable envelopes.  The weight function itself remained mysterious.
\medskip

Let us discuss the case of the full flag varieties $\Fell(n)=\GL_n/B_n$. We will not give here all the technical definitions. We just want to point out how one can interpret the recursion obtained in Theorem \ref{char-indukcja}.
For our purposes it is the most convenient to consider the modified version of the weight function given in \cite{FRW2},
or the limit of the
modified elliptic weight function $\bf \hat w_\omega$ from  \cite[\S6]{RW}. The function considered by us
depends on two sets of variables $\{\gamma_i\}_{1\leq i\leq n-1}$ and $\{z_i\}_{1\leq i\leq n}$.
It is obtained from the weight function of \cite[\S5.4]{FRW2} by the substitution
$\alpha_i^{(k)}=\gamma_i$ for $i\leq k<n$.
The trigonometric weight function satisfies the R-matrix relations, which result in the recursion
\cite[Theorem 3.2, (3.13)]{RTV} (for the distinguished slope $\Delta=\{m_{k,l}\equiv -1\}$)
\begin{equation}
 W_{s_{a}\tau}^\Delta(\pmb{\alpha},{\bf z})\,=\,
\frac{1-h^{-1}z_{a+1}/z_a}{1-z_{a+1}/z_a}\,
 W_\tau^\Delta(\pmb\alpha,s_{a}({\bf z}))+
(h^{-1}-1)\,\frac{1}{1-z_{a+1}/z_a}\,W_\tau^\Delta(\pmb\alpha,{\bf z})
\end{equation}
if $\ell(s_i\tau)>\ell(\tau)$. After substitution $h=-y^{-1}$ and reorganizing the second summand we obtain
\begin{equation}\label{rtv-rec}
\widetilde W_{s_{a}\tau}(\pmb\gamma,{\bf z})\,=\,
\frac{1+yz_{a+1}/z_a}{1-z_{a+1}/z_a}\,
\widetilde W_\tau(\pmb\gamma,s_{a}({\bf z}))+
(1+y)\,\frac{z_{a}/z_{a+1}}{1-z_{a}/z_{a+1}}\,\widetilde W_\tau(\pmb\gamma,{\bf z})\,.
\end{equation}
This is exactly the recursion of Corollary \ref{mc-indukcja} with the variables $t_i$ replaced by $z_i$.
Our goal in this section is to give a topological interpretation of the trigonometric weight function.
\medskip

We present the flag variety $\Fell(n)$ as the quotient of the Stiefel variety $$\text{Stief}(n-1,n)=\{\varphi\in \Hom(\C^{n-1},\C^n)~:~\varphi\text{ is injective}\}\,, $$
$$p:\text{Stief}(n-1,n)\to \Fell(n)=\text{Stief}(n-1,n)/\B_{n-1}\,.$$
The resulting (Kirwan) map
$$\kappa:K_{\T_{n-1}\times \T_n}(\Hom(\C^{n-1},\C^n))\to K_{\T_n}(\Fell(n))$$
is surjective.
Let $\gamma_1,\gamma_2,\dots,\gamma_{n-1}$ be the equivariant variables of $\T_{n-1}$ and let $z_1,z_2,\dots,z_{n}$ be the equivariant variables of $\T_n$.
According to \cite[\S8]{FRW2} applied to the quotient $\text{Stief}(n-1,n)/\B_{n-1}$ for a $\B_{n-1}\times \B_n$ orbit $\O\subset \text{Stief}(n-1,n)$ we have
$$\mC(p(\O)\subset \Fell(n))=\kappa\big(\mC(\O\subset \Hom(\C^{n-1},\C^n))/\lambda_y(\mathfrak b_{n-1})\big)\,,$$
where $$\lambda_y(\mathfrak b_{n-1})=\prod_{i\leq j}\left(1+y\tfrac{\gamma_j}{\gamma_i}\right)\,.$$
Let us identify the space $\Hom(\C^{n-1},\C^n)$ with the subspace of $\NN_{n,n-1}\subset \Hom(\C^{2n-1},\C^{2n-1})$ consisting of the a upper-right block matrices.
We set
\begin{equation}t_{1}=z_1\,,\quad t_{2}=z_{2}\,,\quad \dots\quad t_{n}=z_n\,,\label{ztsubs}\end{equation}
\begin{equation}t_{n+1}=\gamma_1\,,\quad t_{n+2}=\gamma_{2}\,,\quad \dots\quad t_{2n-1}=\gamma_{n-1}\,,\end{equation}
and
\begin{equation}y=-h^{-1}\,.\label{yhsubs}\end{equation}
(The last substitution is already present in \cite{FRW,FRW2}.)
As in \S\ref{Schub_as_sq0} the Borel orbit classes $\mC(\O_w\subset \NN)/e^K(\NN)$ satisfy the recursion of Corollary \ref{mc-indukcja}. If we restrict our attention to the recursion not involving $\gamma$-variables, i.e. $A^K_i$ for $i<n$, we can multiply by $e^K(\Hom(\C^{n-1},\C^n))$. Hence
$$\mC(\O_{s_iw}\subset \Hom(\C^{n-1},\C^n))=A^K_i\left(\mC(\O_w\subset \Hom(\C^{n-1},\C^n)\right)\,.
$$
This is exactly the recursion \eqref{rtv-rec}.
It remains to compare the motivic Chern class with the weight function for the minimal orbit:
$$\mC\big(\O_{w_{n-1}}\subset \Hom(\C^{n-1},\C^n)\big)=\prod_{1=j}^{n-1} \left(\prod_{i=1}^{j-1} (1+y \tfrac{\gamma_j}{z_i})\cdot (1+y) \tfrac{\gamma_j}{z_j}\cdot \prod_{i=j+1}^n(1- \tfrac{\gamma_j}{z_i})\right)\,.
$$
We leave as an exercise to specialize the definition of the weight function (e.g the one given in \cite[\S5.4]{FRW2}) and see that
$$\widetilde W_{id}= \frac{\mC(\O_{w_{n-1}}\subset \Hom(\C^{n-1},\C^n))}{\prod_{i\leq j}\left(1+y\tfrac{\gamma_j}{\gamma_i}\right)}\,.$$
By the recursion it follows that
\begin{cor}Let $\tau \in\Sn$ be a permutation and $w_\tau$ the nilpotent matrix given by the map defined on the standard basis $\{\varepsilon_i\}_{i\leq 2n-1}$
$$\varepsilon_i\mapsto 0\quad\text{ for }1\leq i\leq n,\qquad \varepsilon_{n+j}\mapsto \varepsilon_{\tau(j)}\quad\text{ for }1\leq j\leq n-1\,.$$
Then
$$\widetilde W_{\tau}= \frac{\mC(\O_{w_\tau}\subset \Hom(\C^{n-1},\C^n))}{\lambda_y(\mathfrak b_{n-1})}$$
after the substitutions (\ref{ztsubs}--\ref{yhsubs}).
\end{cor}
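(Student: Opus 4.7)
The plan is a straightforward induction on the length of $\tau$, comparing two recursive structures that the preceding discussion has already placed side by side. The base case $\tau = \mathrm{id}$ is the explicit identification $\widetilde W_{id} = \mC(\O_{w_{n-1}} \subset \Hom(\C^{n-1},\C^n))/\lambda_y(\mathfrak b_{n-1})$ which is flagged as an exercise: one substitutes into the closed-form definition of the trigonometric weight function from \cite[\S5.4]{FRW2} and reads off the product over $j$ of the factors $\prod_{i<j}(1+y\gamma_j/z_i)\cdot(1+y)\gamma_j/z_j \cdot \prod_{i>j}(1-\gamma_j/z_i)$, possibly with the denominator $\lambda_y(\mathfrak b_{n-1})$ appearing as the contribution of the pairs of $\gamma$-variables built into the weight function's normalization. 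I would perform this substitution carefully, matching monomial by monomial to the formula displayed for $\mC(\O_{w_{n-1}} \subset \Hom(\C^{n-1},\C^n))$.

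For the inductive step, I would invoke the two recursions that the excerpt has aligned. On the weight-function side the R-matrix recursion \eqref{rtv-rec}, after the substitution $h = -y^{-1}$, reads
\[
\widetilde W_{s_a\tau}(\pmb\gamma, \mathbf z) = \frac{1+y z_{a+1}/z_a}{1-z_{a+1}/z_a}\, \widetilde W_\tau(\pmb\gamma, s_a(\mathbf z)) + (1+y)\frac{z_a/z_{a+1}}{1-z_a/z_{a+1}}\, \widetilde W_\tau(\pmb\gamma, \mathbf z),
\]
which is term-for-term the operator $A^K_a$ of Corollary \ref{mc-indukcja}. On the motivic-Chern side, Corollary \ref{mc-indukcja} gives
\[
\frac{\mC(\O_{w}\subset\NN)}{e^K(\NN)} = A^K_i\!\left(\frac{\mC(\O_{w'}\subset\NN)}{e^K(\NN)}\right)
\]
whenever $s_i \cdot N_{w'} = N_w$ increases the dimension by one. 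Because $s_i$ for $i<n$ acts trivially on the $\gamma$-variables, the factor $e^K(\Hom(\C^{n-1},\C^n))$ (which is symmetric separately in the two groups of variables) commutes with $A^K_i$, exactly as in the proof of Proposition \ref{prop:szury}. Consequently the same recursion holds for $\mC(\O_w \subset \Hom(\C^{n-1},\C^n))$ itself, and dividing by the constant $\lambda_y(\mathfrak b_{n-1})$ (independent of $\mathbf z$) does not affect the recursion.

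To close the argument I need to know that the elements $w_\tau$ indexing the orbits satisfy the compatibility $w_{s_a\tau} = s_a \cdot w_\tau$ in the sense relevant to Corollary \ref{mc-indukcja}, and that $\ell(s_a\tau) > \ell(\tau)$ corresponds to the dimension increasing. This is a direct verification from the definition of $w_\tau$: the map $\varepsilon_{n+j}\mapsto \varepsilon_{\tau(j)}$ is modified by left-multiplying the image by $s_a$, which matches the action used in the Bender–Perrin-style resolutions of Theorem \ref{th:resolution} and which increases $\dim \O_{w_\tau}$ precisely when $\ell$ increases (cf.\ Lemma \ref{sequence_lemma}). Combining the base case with the matched recursions over a reduced word for $\tau$ yields the claimed equality.

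The only place where genuine work is needed is verifying the base case; everything else is a mechanical comparison of two recursions that the paper has deliberately arranged to look identical. The main obstacle I anticipate is bookkeeping in the base case: the definition of $\widetilde W_{id}$ from \cite[\S5.4]{FRW2} involves a product over ordered pairs, and one must track which pairs produce the $(1+y\gamma_j/\gamma_i)$ factors that get absorbed into $\lambda_y(\mathfrak b_{n-1})$ versus those giving the numerator matching $\mC(\O_{w_{n-1}})$, under the substitutions \eqref{ztsubs}–\eqref{yhsubs}.
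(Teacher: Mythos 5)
Your proof matches the paper's argument: verify the base case $\widetilde W_{id}$ by direct substitution into the FRW2 weight-function formula (the paper flags this as an exercise), observe that the $A^K_a$-recursion of Corollary~\ref{mc-indukcja} commutes with multiplication by $e^K(\Hom(\C^{n-1},\C^n))$ and division by $\lambda_y(\mathfrak b_{n-1})$ since both are invariant under the $s_a$ for $a<n$, identify it with the R-matrix recursion \eqref{rtv-rec}, and close by induction along a reduced word. The only part you elaborate slightly beyond the paper is the indexing compatibility $s_a\cdot N_{w_\tau}=N_{w_{s_a\tau}}$ and the matching of length increase with dimension increase, which the paper leaves tacit; your verification of these is correct.
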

This shows that (up to the factor corresponding to the group $\B_{n-1}$ by which we divide) the modified weight function is the motivic Chern class of the nilpotent $\B_{2n-1}$-orbit.
\medskip

\begin{ex} Following the definition of  \cite[\S5.4]{FRW2} we compute the trigonometric weight function for the flag variety $\Fell(2)$:
$$\widetilde W_{12}=\frac{\gamma_1}{z_1}\cdot (1-\frac{\gamma_1}{z_2}),\qquad
\widetilde W_{21}=(1+y\frac{\gamma_1}{z_2})\cdot\frac{\gamma_1}{z_2}\,.$$
 The trivial permutation corresponds to the minimal orbit in $\NN_3$ of rank 1:
$$\begin{tikzpicture}[thick,scale=0.3]
\draw[fill=yellow](2,1) rectangle (3,3);
\foreach \x in {0,1,2}
  \draw[fill=gray] (\x ,3-\x ) rectangle (\x+1 ,2-\x );
\draw[pattern=north west lines, pattern color=blue] (2,2) rectangle (3,3);
\draw(0,0) rectangle (3,3);
\draw[fill=black] (2.5,2.5) circle (0.25);
\end{tikzpicture}
$$
After the substitution  $(z_1,z_2,\gamma_1)=(t_1,t_2,t_3)$ the above class is equal to $$(y+1)^{-1}\mC(\O_{(3,1)}
\subset \Hom(\C,\C^2))\,.$$
Here $\C$ is given the weight $\gamma_1=t_3$ and $\C^2$ the weights $z_1=t_1$ and $z_2=t_2$.
 Note that $(1+y)=\lambda_y({\mathfrak b_1})$.\medskip

  For the open  orbit in $\Hom(\C^1,\C^2)\subset \NN_3$
$$\begin{tikzpicture}[thick,scale=0.3]
\draw[fill=yellow](2,1) rectangle (3,3);
\foreach \x in {0,1,2}
  \draw[fill=gray] (\x ,3-\x ) rectangle (\x+1 ,2-\x );
\draw[pattern=north west lines, pattern color=blue] (2,1) rectangle (3,3);
\draw(0,0) rectangle (3,3);
\draw[fill=black] (2.5,1.5) circle (0.25);
\end{tikzpicture}
$$
after the substitution  $h=-y$ the weight function $\widetilde W_{21}$ is equal to $$(y+1)^{-1}\mC(\O_{(2,3)}
\subset \Hom(\C,\C^2))\,.$$

\end{ex}

\begin{ex} $$\begin{tikzpicture}[thick,scale=0.2]
\draw[fill=yellow](3,2) rectangle (5,5);
\foreach \x in {0,1,2,3,4}
  \draw[fill=gray] (\x ,5-\x ) rectangle (\x+1 ,4-\x );
\draw[pattern=north west lines, pattern color=blue] (3,4)--(4,4)--(4,3)--(5,3)--(5,5)--(3,5)--(3,4);
\draw(0,0) rectangle (5,5);
\draw[fill=black]  (3.5,4.5) circle (0.25);
\draw[fill=black] (4.5,3.5) circle (0.25);
\end{tikzpicture}
$$
The  0-dimensional cell in $\Fell(3)$ corresponds to the minimal rank 2 orbit in $\NN_5$.
Following  the definition of \cite[\S5.4]{FRW2} we find that the weight function is equal to
$$\widetilde W_{123}= (1 +y\tfrac{\gamma_2 }{\gamma_1})^{-1}\tfrac{\gamma_1 \gamma_2}{z_1 z_2} (1 + y\tfrac{\gamma_2 }{z_1}) (1 - \tfrac{\gamma_1}{z_2}) (1 - \tfrac{\gamma_1}{z_3}) (1 - \tfrac{\gamma_2}{z_3})$$
This  function can be written as
$$\big((y+1)^2(1 +y\tfrac{\gamma_2 }{\gamma_1}) \big)^{-1}\cdot (1+y)\tfrac{\gamma_1}{z_1}  (1 +y \tfrac{\gamma_2 }{z_1})\cdot (1 - \tfrac{\gamma_1}{z_2}) (1+y)\tfrac{\gamma_2} {z_2} \cdot (1 - \tfrac{\gamma_1}{z_3}) (1 - \tfrac{\gamma_2}{z_3})$$
Which is clearly  equal to $\lambda_{y}({\mathfrak b}_2)^{-1}\cdot\mC(\O_{w_2}\subset \Hom(\C^2,\C^3))$
\end{ex}

\begin{rmk}The original weight function depends on the parameters
$$
\big\{\alpha^{(k)}_i\big\}_{i\leq k< n}\quad\text{and}\quad \big\{z_i\big\}_{i\leq n} \,,$$
while the modified weight function is obtained by the substitution $\alpha^{(k)}_i=\gamma_i$ for $i\leq k<n$ and by the division by $\lambda_{-h}(\bigoplus_{k=1}^{n-1} \mathfrak{gl}_k)$.
 Presumably the original weight function is related to the motivic Chern classes of orbits of $$G=\prod_{k=1}^{n-1}\GL_k\times \B_n$$ in the representation space $$V=\bigoplus_{k=2}^n\Hom(\C^{k-1},\C^k)\,.$$
It is desirable to write an explicit resolutions of orbit closures and write formulas for characteristic classes of all orbits of $G$ in $V$, not only those descending to $\Fell(n)$. We leave this task for future.\end{rmk}


\small

\end{document}